\documentclass{article}

\usepackage[utf8]{inputenc}
\usepackage{amssymb, amsmath, amsthm, amsxtra, parskip, hyperref, mathrsfs}
\usepackage[dvipsnames]{xcolor}
\usepackage[capitalise]{cleveref}
\usepackage[bb=dsserif]{mathalpha}
\usepackage[a4paper, left=2cm, right=2cm, top=2cm, bottom=2cm]{geometry}
\usepackage[many]{tcolorbox}
\usepackage{xpatch}
\usepackage{mathtools}
\usepackage{doi}
\usepackage{float}
\usepackage{adjustbox}
\usepackage{caption}
\usepackage{subcaption}

\usetikzlibrary{arrows.meta}

\hypersetup{
	colorlinks=true,
	linkcolor=Sepia,
	filecolor=magenta,      
	urlcolor=Blue,
	citecolor=BlueViolet,
}

\usepackage[backend=biber, style=numeric-comp, url=false, doi=true, isbn=false, eprint=false, sorting = nyt, giveninits=true, backref=true]{biblatex}
\addbibresource{bib.bib}

\binoppenalty=9999
\relpenalty=9999

\def\N{\mathbb{N}}
\def\R{\mathbb{R}}
\def\B{\mathbb{B}}
\def\Sph{\mathbb{S}}
\def\K{\mathbb{K}}
\def\Z{\mathbb{Z}}

\def\cD{\mathcal{D}}
\def\cWD{\mathcal{WD}}
\def\cL{\mathcal{L}}
\def\cV{\mathcal{V}}
\def\cT{\mathcal{T}}
\def\cF{\mathcal{F}}
\def\cP{\mathcal{P}}

\renewcommand{\emptyset}{\varnothing}

\newcommand\E[1]{\mathbb{E}\left[#1\right]}
\renewcommand\P[1]{\mathbb{P}\left(#1\right)}
\newcommand\1[1]{\mathbb{1}\left\{#1\right\}}

\newcommand\norm[1]{\lVert #1 \rVert}
\newcommand\scale[1]{\phi_{#1} \,}

\DeclarePairedDelimiter{\ceil}{\lceil}{\rceil}
\DeclarePairedDelimiter{\floor}{\lfloor}{\rfloor}

\newcommand{\origin}{\ensuremath{\mathbf{o}}}
\newcommand{\support}{\operatorname{h}}
\newcommand{\Poisson}{\operatorname{Poi}}

\DeclareMathOperator*{\argmax}{arg\,max}

\def\dint{\operatorname{d}\!}

\def\Int{\operatorname{Int}}

\def\conv{\operatorname{conv}}

\def\Ext{\operatorname{Ext}}

\newcommand{\halfspaces}{\mathcal{H}^-}
\newcommand{\hyperplanes}{\mathcal{H}}
\newcommand{\halfspace}{H^-}
\newcommand{\hyperplane}{H}
\newcommand{\height}{H}
\newcommand{\cell}{\mathscr{C}}
\newcommand{\halfspacesset}{\mathbf{H}}
\newcommand{\distH}{d_{\mathrm{H}}}
\newcommand{\flower}{\mathscr{F}}
\newcommand{\convexbody}{K}
\newcommand{\biggerconvexbody}{L}
\newcommand{\bop}{\operatorname{b}}
\newcommand{\arc}{\operatorname{arc}}
\newcommand{\shape}{\mathfrak{s}}
\newcommand{\maybeprime}{{(\prime)}}
\newcommand{\typcell}{Z}
\newcommand{\typcellany}{{\typcell^\maybeprime}}
\newcommand{\typcellprime}{{\typcell^\prime}}
\newcommand{\extypcell}{\widehat{\typcell}}
\newcommand{\extypcellany}{\widehat{\typcell}^\maybeprime}
\newcommand{\extypcellprime}{\widehat{\typcell}^\prime}

\renewcommand{\mid}{\,:\,}

\xpatchcmd{\proof}{\itshape}{\bfseries\itshape}{}{}

\newtheorem{theorem}{Theorem}[section]
\newtheorem{prop}[theorem]{Proposition}
\newtheorem{lemma}[theorem]{Lemma}
\newtheorem{corollary}[theorem]{Corollary}
\newtheorem{rem}[theorem]{Remark}
\newtheorem{conj}[theorem]{Conjecture}

\usepackage{authblk}

\title{Degrees in the $\beta$- and $\beta^\prime$-Delaunay graphs}

\author[1,2]{Gilles Bonnet}
\author[1]{Joseph Gordon}

\affil[1]{Bernoulli Institute, University of Groningen}
\affil[2]{CogniGron, University of Groningen}

\newcommand{\email}[1]{\texttt{\href{mailto:#1}{#1}}}
\affil[ ]{\email{g.f.y.bonnet@rug.nl}, \email{joseph.gordon.math@gmail.com}}

\date{\today}  

\begin{document}

\maketitle

\begin{abstract}
We investigate the typical cells $\extypcell$ and $\extypcellprime$ of $\beta$- and $\beta'$-Voronoi tessellations in $\R^d$, establishing a Complementary Theorem which entails: 1) a gamma distribution of the $\Phi$-content (a suitable homogeneous functional) of the typical cell with $n$-facets; 2) the independence of this $\Phi$-content with the shape of the cell; 3) a practical integral representation of the distribution of $\typcellany$. We exploit the latter to derive bounds on the distribution of the facet numbers. 
Using duality, we get bounds on the typical degree distributions of $\beta$- and $\beta'$-Delaunay triangulations. For $\beta'$-Delaunay, the resulting exponential lower bound seems to be the first of its kind for random spatial graphs arising as the skeletons of random tessellations. For $\beta$-Delaunay, matching super-exponential bounds allow us to show concentration of the maximal degree in a growing window to only a finite number of deterministic values (in particular, only two values for $d=2$).
\end{abstract}

\noindent \textbf{Keywords:} 
$\beta$-Delaunay, 
$\beta'$-Delaunay, 
$\beta$-Voronoi, 
$\beta'$-Voronoi, 
$\Phi$-content,
concentration of discrete random variables,
complementary theorem, 
degree distribution, 
Laguerre tessellation,
maximum degree, 
typical cell,
Voronoi flower.

\noindent \textbf{MSC Classes:} 
Primary: 
52A22,
60D05,
60F05;
Secondary:
52B11,
60G55.

\section{Introduction}
\label{sec:intro}

Consider a sequence of random graphs $(G_n = ([n],E_n))_{n\in\N}$ and the maximal degrees $M_n = \max \{ \deg v \mid v \in V_n  \}$ of these graphs.
The asymptotic behavior of these random variables has been studied for a variety of random graphs
including Erd\"os-R\`enyi graph in some regimes \cite{bollobas_random_2001},
uniformly chosen random trees \cite{carr_tree_1994},
triangulations of a polygon \cite{gao_planar_2000},
planar graphs \cite{mcdiarmid_planar_2006,mcdiarmid_planar_2008,drmota_planar_2014},
and minor-closed graphs \cite{gimenez_minor_2016}.
In all these instances it has been observed that $M_n$ is of order $\log n/f(n)$ where $f(n)$ is a correcting factor of slower growth that depends on the specific model.
In many cases it was shown that $M_n$ concentrates on two values, meaning that there is a deterministic sequence $k_n$ such that $\P{M_n\in\{k_n,k_n+1\}}\rightarrow 1$.

While all aforementioned models are purely combinatorial in nature, Penrose has shown the same in \cite{penrose_geometric_2003} for the random geometric graph (also known as the Gilbert graph).
In \cite{bonnet_maximal_2020} authors proved similar concentration for the graph of the Poisson-Delaunay triangulation in $\R^d$, where the maximum in question is taken as $M_\rho = \max \{ \deg v \mid v \in V(G) \cap [0,\rho^{\frac{1}{d}}]^d  \}$, i.e.\ the maximal degree in a growing cubic window. The concentration is shown to be on at most two values for $d=2$ and a finite number of values depending only on $d$ for higher dimensions. The proof involves looking into intricate geometric properties of the model established largely in \cite{bonnet_cells_2018}.
More recently a different behavior for maximal degree in a window was observed in the work of Bhattacharjee and Schulte \cite{bhattacharjee_degrees_2022} for scale-free inhomogeneous random graphs.

The $\beta$- and $\beta'$-Delaunay tessellations are new random triangulation models introduced  by Gusakova, Kabluchko and Th\"ale in \cite{gusakova_beta-delaunay_2022-1,gusakova_beta-delaunay_2022-2,gusakova_beta-delaunay_2022-3,gusakova_beta-delaunay_2023,gusakova_sectional_2024}, see \cref{fig:tessellations}. Their constructions generalize the one of the classical Delaunay triangulation by taking into account additional weight parameters. Dual tessellations, $\beta$- and $\beta'$-Voronoi, respectively, are a particular case of Poisson-Laguerre tessellations discussed in \cite{gusakova_poisson-laguerre_2024,lautensack_laguerre_2008}. 
This paper is motivated by the study of the maximal degree in a large window for the skeletons of these tessellations.
En route, we establish some fundamental properties of the models, which are of independent interest and may find applications in future work.

\begin{figure}[ht]
	\centering
	\includegraphics[width=0.9\textwidth]{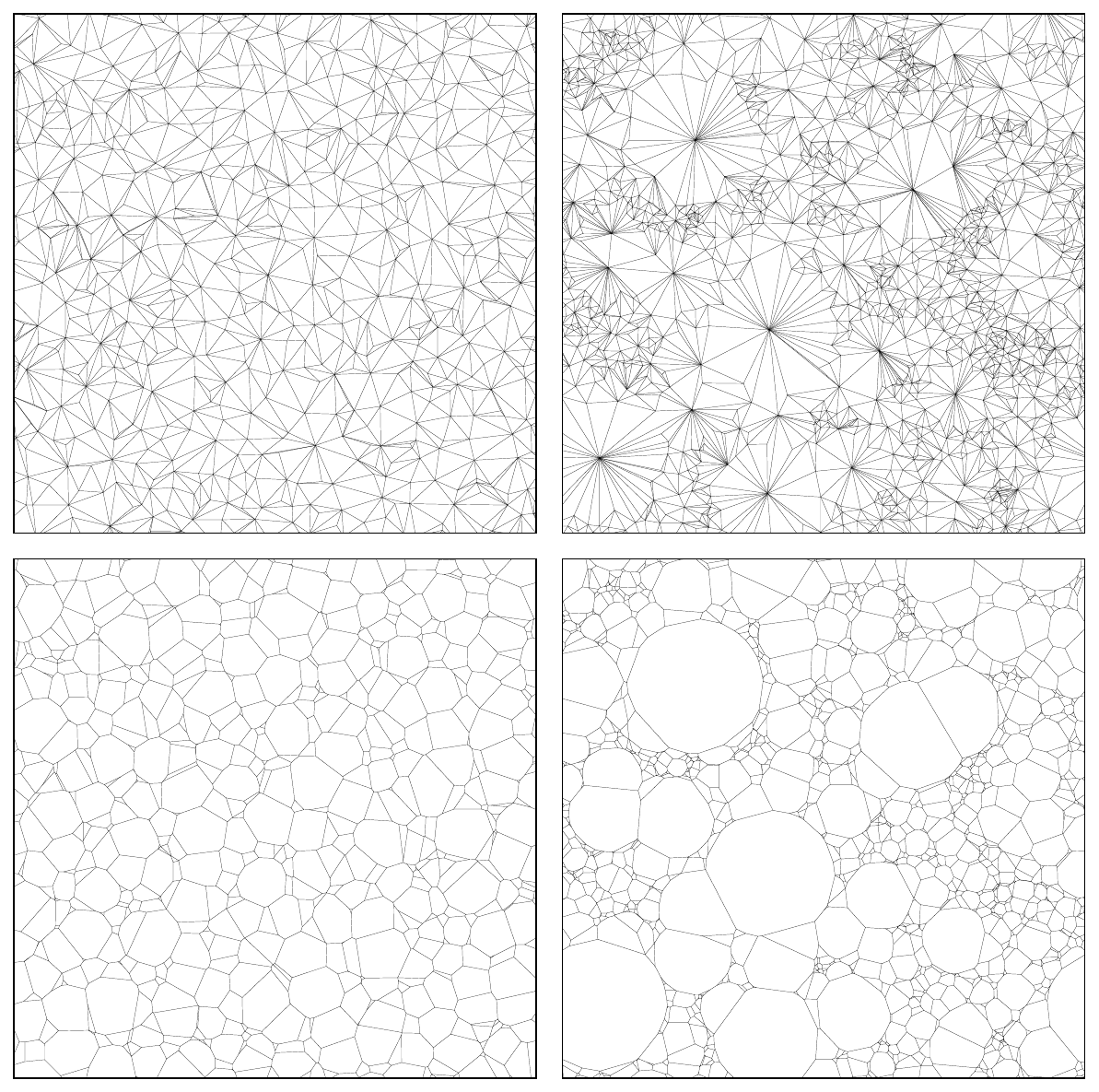}
	\caption{First row: $\beta$-Delaunay and $\beta'$-Delaunay triangulations with $\beta=3$. Second row: corresponding dual Voronoi tessellations.}
	\label{fig:tessellations}
\end{figure}

Before listing our results, we briefly present key concepts that will be defined in detail in the following sections.
Both $\beta$- and $\beta'$-Voronoi tessellations are constructed by assigning a (possibly empty) polyhedral cell in $\R^d$ to each point of a Poisson point process $\eta$ in $\R^d \times \R$, intensity measure of which has density proportional to a power of the height coordinate. The construction is simply the following:
$$\cell_{\eta}(v,h) \coloneqq \{w\in \R^d \mid \norm{w-v}^2 + h \le \norm{w-v'}^2 + h' \text{ for all } (v',h')\in \eta\}, \qquad (v,h)\in\eta.$$
The \textit{typical cells} of these tessellations are denoted by $\extypcellany=(x_\typcellany,\typcellany)$ consisting of a polytope $\typcellany\subset\R^d$ and a nucleus $x_\typcellany=(\origin,h_\typcellany) \in \R^d\times \R$.
We will define the \textit{$\Phi$-content} which is a homogeneous functional on such pairs related to the probability that a given $(d+1)$-tuple of points gives rise to a simplex of the dual triangulation. The \textit{shape} of a cell can be thought of as its homothetic copy of $\Phi$-content $1$.
We are particularly interested in the number of facets of $\typcellany$, which corresponds to the typical degree of a vertex in the dual Delaunay models.

In this paper we:
\begin{itemize}
	\item \textbf{Establish a Complementary Theorem} (\cref{complementary}) for both models in the style of \cite{bonnet_cells_2018} which investigated Poisson hyperplane tessellations. Crucially, it shows that conditional on the number of facets of $\typcellany$, the
	$\Phi$-content and the shape of $\extypcellany$ are distributed independently and the former has the gamma distribution with explicit parameters.
	A general description of complementary theorems in stochastic geometry can be found at the beginning of \cref{sec:complementary}.
	\item \textbf{Give bounds for the distribution of a typical degree} (\cref{lower-bound-beta,lower-bound-beta-prime,upper-bound-beta}). For $\beta$-Delaunay, the lower and upper bound match in order and show a super-exponential decay (more specifically, of order $(ck)^{-\frac{2}{d-1} k}$), similar to what has been shown for the classical Poisson-Delaunay triangulation \cite{bonnet_maximal_2020}, and for the number of facets of the typical cell in a Poisson hyperplane tessellation \cite{bonnet_cells_2018}. For $\beta'$-Delaunay, we establish an exponential (i.e.\ $c^k$) lower bound, indicating a completely different behavior.
	There exists in the literature a number of random graph models with a similar or slower decay of the degree distribution. This includes models of random spatial graphs, e.g. \cite{bhattacharjee_degrees_2022}, but to the best of authors' knowledge, this is the first such result for the graph of a random tessellation.
	\item \textbf{Establish a concentration result} for the maximal degree in a window for the graph of the $\beta$-Delaunay triangulation (\cref{concentration}).
	Our result applies also to the classical Poisson-Delaunay model for which we recover the concentration on two values when $d=2$ obtained in \cite[Theorem 1]{bonnet_maximal_2020} and improve the concentration on finitely many values for $d\geq 3$ established in \cite[Theorem 3]{bonnet_maximal_2020} by showing concentration on a strictly smaller number of values.
	Furthermore, in the case $d=2$, the new proof is significantly simpler. See \cref{relation-poisson} for details.
\end{itemize}

Our contributions lead to the natural formulation of several conjectures, which we present in \cref{sec:open}. There, we provide heuristics supporting these conjectures and discuss the challenges involved in proving them.

A brief guide to this text is as follows.
In \cref{sec:notation}, we recall basic notions essential to this paper.
In \cref{sec:models}, we introduce the tessellation models under investigation.
\cref{sec:complementary}, which is the longest, analyzes the typical cells of the $\beta$- and $\beta'$-Voronoi tessellations (duals of the corresponding Delaunay models). In particular, we estimate the distribution of the typical height of a vertex, establish a Complementary Theorem, and lay the groundwork for the results on typical degrees.
In \cref{sec:lower,sec:upper}, we bound the distribution of a typical degree from below and from above, respectively.
In \cref{sec:concentration}, we address the motivating question of the paper by proving a concentration result for the maximal degree.
Finally, in \cref{sec:open}, we list open questions and directions for future work.
\section{Preliminaries and notation}
\label{sec:notation}

In this section we set the notation used throughout the paper, starting with some common conventions.

\subsection{Basics}

We will mostly work in $\R^d \times \R$ with its elements in the form $x = (v,h)$, where $h\in \R$ represents the weight (also called height) associated to the point in $v\in \R^d$.
The origin is denoted by $\origin = (0,\ldots,0)\in \R^d$.
Brackets $\langle \cdot, \cdot \rangle$ denote the standard scalar product and $\norm{\cdot}$ the Euclidean norm.
For an indexed collection of points from any space $E$ we use the short notation
\[ x_{a:b} \coloneqq (x_a,x_{a+1},\ldots,x_b) \in E^{b-a+1}. \]
The unit ball in the Euclidean space is denoted as $\B^d \coloneqq \{v\in\R^d \mid \norm{v} < 1\}$ and the unit sphere $\Sph^{d-1} \coloneqq \partial \B^d$.
Their respective volume and surface area are denoted by $\kappa_d$ and $\omega_d$ and their values are recalled in \Cref{sec:constants}.
A ball with center $v\in \R^d$ and radius $r>0$ is denoted by $\B^d_v(r) \coloneqq r\B^d + v$.
For $v_0, \ldots, v_d \in \R^d$ in general position there exist unique $v\in \R^d$, $r>0$ such that $v_0, \ldots, v_d \in \partial\B^d_v(r)$ and we write $\B^d_v(r) \eqcolon B(v_0,\ldots,v_d)$.

We will use $\lambda_d$ for the Lebesgue measure on $\R^d$ and $\sigma_{d-1}$ for the spherical Lebesgue measure on $\Sph^{d-1}$.
We use the short notation $[n] \coloneqq \{1,\ldots,n\}$ for any $n\in\N$.
For a set $X$ and $n\in\N$, we denote by $X^n_{\neq}$ the set of all $n$-tuples of distinct elements of $X$.

\subsection{Half-spaces, polytopes, convex bodies}

Let $\overline{\R}=\R\cup\{\pm\infty\}$ be the extended real line.
For $u\in \R^d$ and $t\in \overline{\R}$, denote $\halfspace(u,t) \coloneqq \{v\in\R^d \mid \langle v,u \rangle \le t\}$. If $\halfspace(u,t)$ is distinct from $\R^d$ or $\emptyset$, we call it a \textbf{half-space} of $\R^d$.
We will use the space of all half-spaces and its extension:
$$\halfspaces \coloneq \{\halfspace(u,t) \mid u\in \Sph^{d-1},\ t\in \R\},
\qquad
\overline{\halfspaces} \coloneq \halfspaces \cup \{\emptyset,\R^d\}.$$
The topology on $\halfspaces$ is induced by its bijection with $\Sph^{d-1} \times \R$, making them homeomorphic. 
The extended space $\overline{\halfspaces}$ can be constructed by taking $\Sph^{d-1} \times \overline{\R}$ and contracting the spheres $\Sph^{d-1}\times\{-\infty\}$ and $\Sph^{d-1}\times\{\infty\}$ into points. It inherits its topology from this construction, and is thus homeomorphic to the sphere $\Sph^d$.

A \textbf{hyperplane} is the boundary of a half-space:
$$\hyperplane(u,t) = \hyperplane(-u,-t) \coloneqq \halfspace(u,t) \cap \halfspace(-u,-t) = \{v\in\R^d \mid \langle v,u \rangle = t\}.$$
The space of all hyperplanes 
$$ \hyperplanes \coloneqq \{ \hyperplane(u,t) \mid u\in \Sph^{d-1}, t\in \R\}$$
is equipped with the topology inherited from $\Sph^{d-1} \times \R$ through the canonical surjection.

We will use the term \textbf{polyhedron} for a finite intersection of half-spaces $P = \bigcap_{i=1}^n \halfspace_i$, where $\halfspace_1,\ldots,\halfspace_n\in\halfspaces$.
And we will call a polyhedron that is bounded as a set in $\R^d$ a \textbf{polytope}.

A hyperplane $\hyperplane$ is a \textbf{supporting hyperplane} of a polyhedron $P$ if $F = P \cap \hyperplane \ne \emptyset$ and $\Int(P) \cap \hyperplane = \emptyset$, in which case we call $F$ a \textbf{face} of $P$. Note that a face of a polyhedron is a polyhedron itself.
For a polyhedron $P$ we denote the set of its faces of affine dimension $k$ by $\cF_k(P)$. Let $\cF = \bigsqcup_{k=0}^{d-1} \cF_k$ and $f_k(P) = |\cF_k(P)|$.
Faces of dimension $d-1$, $1$, and $0$ are called \textbf{facets}, \textbf{edges}, and \textbf{vertices}, respectively.

We denote the spaces of $n$-facet polytopes and of all polytopes by
$$\cP_n \coloneqq \{P = \bigcap_{i=1}^n \halfspace_i \mid P \text{ is bounded},\  f_{d-1}(P) = n\}
\quad \text{and} \quad 
\cP \coloneqq \bigsqcup_{n=d+1}^\infty \cP_n,$$
respectively.
It is a subspace of $\mathbb{K}$ the space of all \textbf{convex bodies} (i.e.\ compact nonempty convex sets) in $\R^d$, equipped with Hausdorff metric $\distH$.
On $\mathbb{K} \times \Sph^{d-1}$ we can define
\begin{equation}
	\label{e:support}
	\support(\convexbody,u) \coloneqq \max_{x\in \convexbody} \{\langle x, u \rangle\} = \min \{t\in \R \mid \convexbody\subset \halfspace(u,t)\},
\end{equation}
the value of the support function of $\convexbody$ in direction $u$.

\subsection{Tessellations}

A \textbf{tessellation} of $\R^d$ is a locally finite covering by polyhedra that do not have common interior points. More precisely, a set of polyhedra $\cT$ is a tessellation of $\R^d$ if:
\begin{itemize}
	\item $\bigcup_{P\in\cT} P = \R^d$.
	\item $\left|\{P\in\cT \mid P\cap B \ne \emptyset\}\right|<\infty$ for any bounded $B\subset \R^d$.
	\item $\Int(P_1)\cap \Int(P_2) = \emptyset$ for any $P_1,P_2 \in \cT$.
\end{itemize}
We call the elements of a tessellation its \textbf{cells}.
We say that $\cT$ is \textbf{face-to-face} when, additionally, for any 
two cells their intersection is either empty or a face of both.
Denote $\cF_k(\cT) \coloneqq \bigcup_{P\in\cT} \cF_k(P)$, $k=0,\ldots,d-1$, and $\cF_d(\cT) \coloneqq \cT$ and call these faces of $\cT$. 
A face-to-face tessellation $\cT$ is called \textbf{normal} when for each $k\in\{0,\ldots,d-1\}$ and $F\in \cF_k(\cT)$,
there are exactly $d-k+1$ cells in $\cT$ having $F$ as a face.

Lastly, vertices and edges of the cells in a tessellation $\cT$ form a graph $G(\cT) \coloneqq (V,E)$, where
$V = \cF_0(\cT)$ and ${E = \{\cF_0(e) \mid e \in \cF_1(\cT)\}}$. 

\subsection{Paraboloids}

\begin{figure}[H]
	\centering
\def\Ymin{-2}  
\def\Ymax{2}
\def\Xmargin{0.4}
\def\Myscale{0.75}

\begin{subfigure}{0.24\textwidth}
\centering
\begin{tikzpicture}[scale=\Myscale]
    \def\shiftx{0}
    \def\shifty{0}
    \pgfmathsetmacro{\Xmin}{\shiftx - sqrt(\shifty - \Ymin)-\Xmargin}
    \pgfmathsetmacro{\Xmax}{\shiftx + sqrt(\shifty - \Ymin)+\Xmargin}
    \clip (\Xmin,\Ymin) rectangle (\Xmax,\Ymax);
     \fill[gray!30, domain=\Xmin:\Xmax, variable=\x]
     plot ({\x},{-(\x-\shiftx)*(\x-\shiftx)+\shifty})
     -- (\Xmax,\Ymin) -- (\Xmin,\Ymin) -- cycle;

    \draw[->, thick, >=Latex] (\Xmin,0) -- (\Xmax,0); 
    \draw[->, thick, >=Latex] (0,\Ymin) -- (0,\Ymax); 
    \draw[domain=\Xmin:\Xmax,smooth,variable=\x] plot ({\x},{-(\x-\shiftx)*(\x-\shiftx)+\shifty}); 
\end{tikzpicture}
\caption{$\Pi^\downarrow$}
\end{subfigure}
\hfill
%
\begin{subfigure}{0.24\textwidth}
\centering
\begin{tikzpicture}[scale=\Myscale]
    \def\shiftx{-0.6}
    \def\shifty{0.8}
    \def\xx{\shiftx}
    \def\xy{\shifty}
    \pgfmathsetmacro{\Xmin}{\shiftx - sqrt(\shifty - \Ymin)-\Xmargin}
    \pgfmathsetmacro{\Xmax}{\shiftx + sqrt(\shifty - \Ymin)+\Xmargin}
    \clip (\Xmin,\Ymin) rectangle (\Xmax,\Ymax);
    \fill[gray!30, domain=\Xmin:\Xmax, variable=\x]
    plot ({\x},{-(\x-\shiftx)*(\x-\shiftx)+\shifty})
    -- (\Xmax,\Ymin) -- (\Xmin,\Ymin) -- cycle;
    \draw[->, thick, >=Latex] (\Xmin,0) -- (\Xmax,0); 
    \draw[->, thick, >=Latex] (0,\Ymin) -- (0,\Ymax); 
    \draw[domain=\Xmin:\Xmax,smooth,variable=\x] plot ({\x},{-(\x-\shiftx)*(\x-\shiftx)+\shifty}); 
    \filldraw[black] (\xx,\xy) circle (2pt); 
    \node[above] at (\xx,\xy) {$x$};
\end{tikzpicture}
\caption{$\Pi^\downarrow_x$}
\end{subfigure}
\hfill
%
\begin{subfigure}{0.24\textwidth}
\centering
\begin{tikzpicture}[scale=\Myscale]
    \def\shiftx{0.5}
    \def\shifty{1.4}
    \def\xx{-1}
    \def\xy{{\shifty-(\shiftx-\xx)*(\shiftx-\xx)}}
    \def\wx{\shiftx}
    \def\wy{0}
    \pgfmathsetmacro{\Xmin}{\shiftx - sqrt(\shifty - \Ymin)-\Xmargin}
    \pgfmathsetmacro{\Xmax}{\shiftx + sqrt(\shifty - \Ymin)+\Xmargin}
    \clip (\Xmin,\Ymin) rectangle (\Xmax,\Ymax);
     \fill[gray!30, domain=\Xmin:\Xmax, variable=\x]
     plot ({\x},{-(\x-\shiftx)*(\x-\shiftx)+\shifty})
     -- (\Xmax,\Ymin) -- (\Xmin,\Ymin) -- cycle;

    \draw[->, thick, >=Latex] (\Xmin,0) -- (\Xmax,0); 
    \draw[->, thick, >=Latex] (0,\Ymin) -- (0,\Ymax); 
    \draw[domain=\Xmin:\Xmax,smooth,variable=\x] plot ({\x},{-(\x-\shiftx)*(\x-\shiftx)+\shifty}); 
    \filldraw[black] (\xx,\xy) circle (2pt); 
    \draw[thick] (\wx,0.1) -- (\wx,-0.1); 
    \draw[thick] (\xx,-0.1) -- (\xx,0.1); 
    \draw[thick] (-0.1,\xy) -- (0.1,\xy); 
    \draw[dotted] (\wx,\wy) -- (\shiftx,\shifty); 
    \draw[dotted] (\xx,0) -- (\xx,\xy); 
    \draw[dotted] (0,\xy) -- (\xx,\xy); 
    \node[left] at (\xx,\xy) {$x$};
    \node[above] at (\xx,0) {$v$};
    \node[right] at (0,\xy) {$h$};
    \node[below] at (\wx,\wy) {$w$};
\end{tikzpicture}
\caption{$\Pi^\downarrow_w(x)$}
\end{subfigure}
\hfill
%
\begin{subfigure}{0.24\textwidth}
\centering
\begin{tikzpicture}[scale=\Myscale]
    \def\shiftx{-0.5}
    \def\shifty{0.9}
    \def\xonex{-1.7}
    \def\xoney{{\shifty-(\shiftx-\xonex)*(\shiftx-\xonex)}}
    \def\xtwox{1}
    \def\xtwoy{{\shifty-(\shiftx-\xtwox)*(\shiftx-\xtwox)}}
    \pgfmathsetmacro{\Xmin}{\shiftx - sqrt(\shifty - \Ymin)-\Xmargin}
    \pgfmathsetmacro{\Xmax}{\shiftx + sqrt(\shifty - \Ymin)+\Xmargin}
    \clip (\Xmin,\Ymin) rectangle (\Xmax,\Ymax);
     \fill[gray!30, domain=\Xmin:\Xmax, variable=\x]
     plot ({\x},{-(\x-\shiftx)*(\x-\shiftx)+\shifty})
     -- (\Xmax,\Ymin) -- (\Xmin,\Ymin) -- cycle;
    \draw[->, thick, >=Latex] (\Xmin,0) -- (\Xmax,0); 
    \draw[->, thick, >=Latex] (0,\Ymin) -- (0,\Ymax); 
    \draw[domain=\Xmin:\Xmax,smooth,variable=\x] plot ({\x},{-(\x-\shiftx)*(\x-\shiftx)+\shifty}); 
    \filldraw[black] (\xonex,\xoney) circle (2pt); 
    \filldraw[black] (\xtwox,\xtwoy) circle (2pt);  
    \node[left] at (\xonex,\xoney) {$x_0$};
    \node[right] at (\xtwox,\xtwoy) {$x_1$};
\end{tikzpicture}
\caption{$\Pi^\downarrow(x_0, x_1)$}
\end{subfigure}
	\caption{Paraboloids in $\R^1\times\R$.} 
	\label{fig:paraboloids}
\end{figure}

We will consider numerous times (downward) paraboloids in $\R^d \times \R$ of the forms
\[
	\Pi^\downarrow \coloneqq \{(v,h)\in \R^d \times \R \mid h\le - \norm{v}^2\} ,
	\quad
	\Pi^\downarrow_x \coloneqq \Pi^\downarrow + x ,
	\quad
	\Pi^\downarrow_w(x) \coloneqq \Pi^\downarrow_{(w,h+\norm{w-v}^2)} ,
\]
for arbitrary $x=(v,h)\in \R^d \times \R$ and $w\in \R^d\setminus\{v\}$, see \cref{fig:paraboloids}.
The first one is the paraboloid with an \textit{apex at the origin}, the second one is a translation of the first one which has an \textit{apex at $x$}, and the third one is a paraboloid that has \textit{$x$ on its boundary and its apex on the line $\{w\}\times\R$}.
We complete this list by the \textbf{circumscribed downward paraboloid} $\Pi^\downarrow (x_0,\ldots,x_d)$ of $d+1$ boundary points $x_i=(v_i,h_i)$, $i=0,\ldots,d$, with $v_0,\ldots,v_d\in \R^d$ affinely independent. It is defined as the unique paraboloid such that
${x_0,\ldots,x_d \in \partial\Pi^\downarrow (x_0,\ldots,x_d)}$.

\subsection{Functions and constants}
\label{sec:constants}

Recall the definitions of the \textbf{gamma function}, the \textbf{incomplete gamma function} and the \textbf{beta function}:
\[ \Gamma(z) = \int_0^\infty t^{z-1} e^{-t} \dint t,
\qquad
\Gamma(z,x) = \int_x^\infty t^{z-1} e^{-t} \dint t ,
\qquad 
B(z_1,z_2) = \int_0^1 t^{z_1-1} (1-t)^{z_2-1} \dint t = \frac{\Gamma(z_1)\Gamma(z_2)}{\Gamma(z_1+z_2)} , \]
well defined in particular for $x,z,z_1,z_2>0$. Recall that $\Gamma(z+1) = z \Gamma(z)$.
The following bound will be useful for our purposes.

\begin{prop}[{\cite[Satz~4.4.3]{gabcke_1979} and \cite[Eq. (2) and (9)]{qi_inequalities_1999}}]
	\label{incomplete-gamma}
	Let $x > 0$ and $z\in(0,\max(x,1))$.
	Then
	$$\Gamma(z,x) \le \max(z,1) e^{-x} x^{z-1}.$$
\end{prop}

We denote the volume of the $d$-dimensional unit ball and the surface area of its boundary by
$$
\kappa_d \coloneqq \lambda_d(\B^d) = \frac{\pi^\frac{d}{2}}{\Gamma(1+\frac{d}{2})}
\qquad \text{and} \qquad
\omega_d \coloneqq \sigma_{d-1}(\Sph^{d-1}) = \frac{2\pi^\frac{d}{2}}{\Gamma(\frac{d}{2})}.
$$
We also introduce, for any $c>0$ and $v'\in \R^d$, the scaling operator $\scale{c}$ and the translation by $v'$ on $\R^d \times \R$ defined by
\begin{align}
	\label{e:scale}
	\scale{c} (v,h) \coloneqq (cv,c^2 h) 
	\quad \text{and} \quad
	(v,h) + v' = v' + (v,h) \coloneqq (v+v', h'),
\end{align}
for any $(v,h)\in \R^d \times \R$.
These operators also occasionally extend to spaces of the form
$(\R^d \times \R)^l$,
$(\R^d \times \R)^l \times (\R^d)^m \times \R^n$
or
$(\R^d \times \R) \times \K$ by setting
\begin{align*}
	\scale{c} (x_{1:l})
	&= (\scale{c} x_1, \ldots, \scale{c} x_l),
	&
	x_{1:l}+v'
	&= (x_1+v', \ldots, x_l+v'),
	\\
	\scale{c} (x_{1:l}, v_{1:m}, h_{1:n}) 
	&= (\scale{c} x_{1:l}, c\,v_{1:m}, c^2\,h_{1:n}),
	&	
	(x_{1:l}, v_{1:m}, h_{1:n}) + v'
	&= (x_{1:l}+v', v_{1:m}+v', h_{1:n}),
	\\
	\scale{c}(x,K)
	&= (\scale{c} x, c K),
	&
	(x,K) + v'
	&= (x+v', K+v'),
\end{align*}
for any $x, x_1,\ldots,x_l\in \R^d \times \R$, $v_1,\ldots,v_m\in \R^d$, $h_1,\ldots,h_n\in \R$ and $K\in \K$.

\begin{rem}[Scaling property and translation equivariance of paraboloids]
	\label{scprop}
	An essential property of these operators is that for any $c>0$, $v\in\R^d$ and $x,x_0,\ldots,x_d\in\R^d\times\R$,
	\begin{enumerate}
		\item $\Pi^\downarrow_{v + \scale{c} x} = v + \scale{c} \Pi^\downarrow_x $;
		\item $\Pi^\downarrow (v+\scale{c} x_{0:d}) = v + \scale{c} \Pi^\downarrow (x_{0:d})$.
	\end{enumerate}
\end{rem}

\section{The models}
\label{sec:models}

\subsection{Deterministic constructions}

Let a set of points $X\subset \R^d \times \R$ be well-spread and lying in general position, which in our context means that
\begin{subequations}
	\begin{equation}
		\tag{WS} \label{e:well-spread}
		\conv \left( \cup_{(v,h)\in X} \{v\} \right)
		=\R^d ,
	\end{equation}
	\begin{equation}
		\tag{GP} \label{e:general-position}
		\begin{split} 
			&\text{$v_0,\ldots,v_d$ are affinely independent for any $d+1$ points $(v_0,h_0),\ldots,(v_d,h_d)$} \\
			&\text{and no $d+2$ points lie on the boundary of a same downward paraboloid.}
		\end{split}
	\end{equation}
\end{subequations}
Then the \textbf{Weighted Delaunay triangulation} (also known as regular triangulation) $\cWD(X)$ is the set of simplices in $\R^d$ chosen in the following way (see \cref{fig:parpro}). For any $d+1$ points $x_0 = (v_0,h_0),\ldots,x_d = (v_d,h_d)\in X$, their vector coordinates $v_0,\ldots,v_d$ form a simplex of the triangulation whenever their circumscribed downward paraboloid does not contain any other points of $X$:
$$
\conv\{v_0,\ldots,v_d\}\in \cWD(X)
\Leftrightarrow
X\cap \Int(\Pi^\downarrow(x_{0:d})) = \emptyset.
$$
\begin{figure}[ht]
	\centering
	\includegraphics[width=\textwidth]{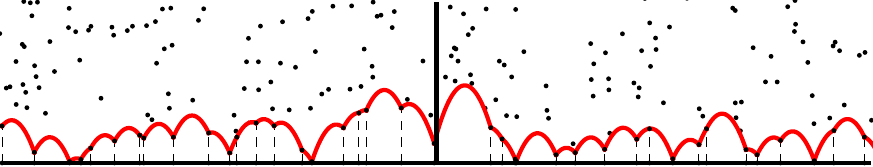}
	\caption{A Weighted Delaunay triangulation on $\R$. Illustration courtesy of Anna Gusakova.}
	\label{fig:parpro}
\end{figure}%
Dual to it is the \textbf{Laguerre diagram} $\cL(X)$, which is also known as Laguerre-Voronoi diagram, power diagram, and generalized Dirichlet tessellation.
It is defined as the collection of polyhedral cells
\begin{equation}
	\label{e:Laguerre}
	\cell_X(v,h) = \{w\in \R^d \mid \norm{w-v}^2 + h \le \norm{w-v'}^2 + h' \text{ for all } (v',h')\in X\} \subset \R^d,
\end{equation}
for all $(v,h)\in X$.
Note that such a cell might be empty, and that even if non-empty it does not necessarily contain  $v$ (sometimes called nucleus of the cell).

Let us assume that $\cL(X)$ is a face-to-face tessellation, with $X$ satisfying \eqref{e:general-position} and \eqref{e:well-spread}.
It was repeatedly observed in literature (see e.g.\ \cite{aurenhammer_power_1987,schlottmann_periodic_1993,aurenhammer_voronoi_2013}) that $\cL(X)$ is also normal, $\cWD(X)$ is a face-to-face tessellation as well, and they are dual to each other. More precisely, let $k\in\{0,\ldots,d\}$.
Any  $F\in \cF_k(\cL(X))$ takes the form $F = \cap_{i=0}^{d-k} \cell_X(v_i,h_i)$ for some distinct points $x_0, \ldots, x_{d-k}\in X$ having the form $x_i = (v_i,h_i)$.
It can be assigned a simplex $\theta_k(F)\coloneqq \conv\{v_1,\ldots,v_{d-k}\}$. Then each map $\theta_k$ is a bijection between $\cF_k(\cL(X))$ and $\cF_{d-k}(\cWD(X))$.
The most important part of this duality for us is the following lemma.
\begin{lemma}
	\label{duality}
	Let $X$ satisfy \eqref{e:general-position} and \eqref{e:well-spread} and assume $\cL(X)$ is a face-to-face tessellation.
	Let $(v_1,h_1)$, $(v_2,h_2)\in X$.
	Then the graph $G(\cWD(X))$ has an edge between $v_1$ and $v_2$ if and only if $\cell_X(v_1,h_1)$ and $\cell_X(v_2,h_2)$ share a facet.
\end{lemma}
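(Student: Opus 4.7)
The proof is essentially an unwinding of the duality between $\cL(X)$ and $\cWD(X)$ stated in the paragraph preceding the lemma. The plan is to specialize the bijection $\theta_k\colon \cF_k(\cL(X)) \to \cF_{d-k}(\cWD(X))$ to $k = d-1$, so that it relates facets of the Laguerre cells to $1$-faces of the weighted Delaunay triangulation, and then observe that the graph edge between $v_1$ and $v_2$ is just the vertex-set $\cF_0$ of such a $1$-face.

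More concretely, I would argue as follows. First, suppose $\cell_X(v_1,h_1)$ and $\cell_X(v_2,h_2)$ share a facet $F$. Then $F \in \cF_{d-1}(\cL(X))$ and by the explicit form of a $(d-1)$-face recalled in the excerpt, $F$ is exactly the intersection of these two Laguerre cells. By the bijection, $\theta_{d-1}(F) = \conv\{v_1, v_2\} \in \cF_1(\cWD(X))$. The endpoints of this segment are the two vertices $v_1, v_2$, so by the definition $E = \{\cF_0(e) \mid e \in \cF_1(\cT)\}$ of the graph of a tessellation, $\{v_1,v_2\}$ is an edge of $G(\cWD(X))$. Conversely, if $G(\cWD(X))$ contains the edge between $v_1$ and $v_2$, then by definition there exists $e \in \cF_1(\cWD(X))$ with $\cF_0(e) = \{v_1,v_2\}$; since $e$ is a $1$-simplex with these two vertices, $e = \conv\{v_1,v_2\}$. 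Applying $\theta_{d-1}^{-1}$ yields a facet of $\cL(X)$ which, by the form of the bijection, must equal $\cell_X(v_1,h_1) \cap \cell_X(v_2,h_2)$, so the two Laguerre cells share this facet.

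There is no real obstacle here: the two main ingredients, namely that $\cWD(X)$ is face-to-face under the hypotheses and that the dualities $\theta_k$ are well-defined bijections with the claimed form, are quoted from the references in the preceding discussion. The only small subtlety is to check that one is correctly identifying the two possible notions of ``edge'': a $1$-dimensional face in $\cF_1(\cWD(X))$ versus a graph edge in the sense of a pair of vertices. This is handled by the explicit definition $E = \{\cF_0(e) \mid e \in \cF_1(\cT)\}$ given in \cref{sec:notation}, combined with the general-position assumption \eqref{e:general-position} which ensures that any $1$-face of $\cWD(X)$ has exactly two distinct vertices.
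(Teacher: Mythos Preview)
Your proposal is correct and is exactly how the paper treats the lemma: it is stated as an immediate consequence of the duality bijections $\theta_k$ described in the preceding paragraph (with references to the literature), without a separate proof. You have simply made the specialization $k=d-1$ explicit, which is the intended reading.
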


We will consider both constructions on the same set of points simultaneously, since they complement each other.

\begin{rem}
The local geometry around each $v_0\in \cF_0(\cWD(X))$ is captured by the \textbf{Voronoi flower} (see \cref{fig:flower}) around $x_0 = (v_0,h_0)\in X$ which is defined as
\begin{equation}
	\label{e:flower}
	\flower_X(x_0)\coloneqq \bigcup_{\substack{x_{1:d}\in X^d_{\ne}\\ \Int(\Pi^\downarrow(x_{0:d})) \cap X = \emptyset}} \Pi^\downarrow(x_{0:d}).
\end{equation}
Precisely, assume $x_1 = (v_1,h_1), x_2 = (v_2,h_2) \in X$. Then
$\{v_1,v_2\}\in G(\cWD(X)) \Leftrightarrow x_2  \in \flower_X(x_1) \Leftrightarrow x_2  \in \partial \flower_X(x_1)$.
This object will be again more heavily discussed in \cref{s:flower}.
\end{rem}

\begin{figure}[ht]
	\centering
	\begin{subfigure}{0.45\textwidth}
		\adjincludegraphics[height=7cm,trim={0 0 0 {0.3\height}},clip]{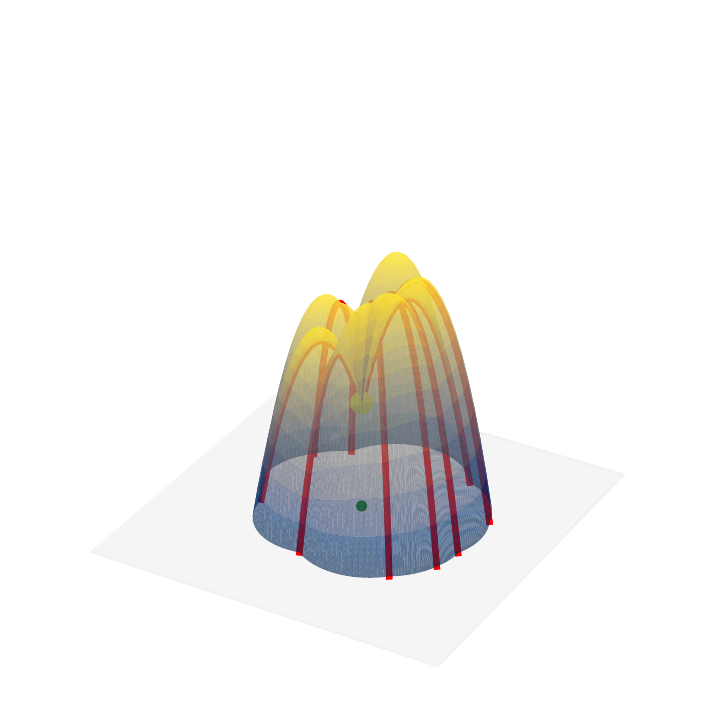}
	\end{subfigure}
	\hfill
	\begin{subfigure}{0.45\textwidth}
		\adjincludegraphics[height=7cm,trim={90 90 80 80},clip]{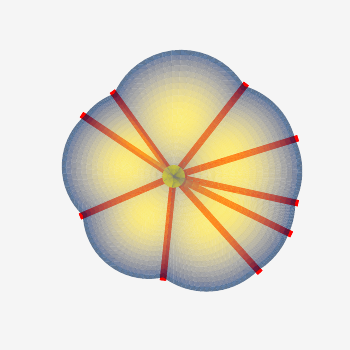}
	\end{subfigure}
	\caption{A Voronoi flower of a point in $\R^2$, view from the side and from above}
	\label{fig:flower}
\end{figure}

\begin{rem}
	\label{flower-above-vertices}
	Paraboloids in the union in \eqref{e:flower} have apices directly above the vertices of the Laguerre cell. In particular, for $x\in X$,
	$$\flower_X(x)= \bigcup_{w\in \cF_0(\cell_X(x))} \Pi^\downarrow_w(x).$$
\end{rem}

Let $V \subset \R^d$ be locally finite. As a special case, define its \textbf{Delaunay triangulation} and \textbf{Voronoi tessellation} as
$$
\cD(V) \coloneqq \cWD(V\times\{0\})
\quad \text{and} \quad
\cV(V) \coloneqq \cL(V\times\{0\}).
$$

\subsection{Random tessellations}

Let $\eta$ be a Poisson point process with intensity $1$ on $\R^d$. Classical models of \textbf{Poisson-Delaunay} and \textbf{Poisson-Voronoi tessellations} are defined as
$\cD \coloneqq \cD(\eta)$ and $\cV \coloneqq \cV(\eta)$ respectively.
Both $\cWD$ and $\cL$ also give rise to random tessellation models which were introduced in \cite{gusakova_beta-delaunay_2022-1}. We will give a brief introduction of them as well.

For any $\beta>-1$ take a Poisson point process $\eta_\beta$ in the space $\R^d \times [0,\infty)$ with the intensity measure $\mu_\beta$ of density proportional to $h^\beta$. More specifically,
\begin{align}
	\label{def:mu_beta}
	\mu_\beta (\dint v, \dint h) \coloneqq \gamma c_{d+1,\beta} h^\beta \1{h\geq 0} \dint v \dint h 
	\quad \text{ where } \quad
	c_{d+1,\beta} \coloneqq \frac{\Gamma(\frac{d+1}{2}+\beta+1)}{\pi^\frac{d+1}{2}\Gamma(\beta+1)}
\end{align}
is a normalization constant
and $\gamma > 0$ the intensity parameter.
We call $\cD_\beta \coloneqq \cWD(\eta_\beta)$ the {\bf $\beta$-Delaunay triangulation}, and its dual counterpart $\cV_\beta \coloneqq \cL(\eta_\beta)$ the {\bf $\beta$-Voronoi tessellation}.

Similarly, for any $\beta > \frac{d}{2}+1$ take a Poisson point process $\eta^\prime_\beta$ in the space $\R^d \times (-\infty,0)$ with the intensity measure $\mu_\beta^\prime$ of density proportional to $(-h)^{-\beta}$. More specifically, 
\begin{align}
	\label{def:mu_beta_prime}
	\mu_\beta^\prime (\dint v, \dint h) \coloneqq \gamma c^\prime_{d+1,\beta} (-h)^{-\beta} \1{h<0} \dint v \dint h
	\quad \text{ where } \quad
	c^\prime_{d+1,\beta} \coloneqq \frac{\Gamma(\beta)}{\pi^\frac{d+1}{2}\Gamma(\beta - \frac{d+1}{2})}
\end{align}
is again a normalization constant
and $\gamma > 0$ the intensity parameter.
Now we call $\cD_\beta^\prime \coloneqq \cWD(\eta^\prime_\beta)$ the {\bf $\beta^\prime$-Delaunay triangulation} and $\cV_\beta^\prime \coloneqq \cL(\eta^\prime_\beta)$ the {\bf $\beta^\prime$-Voronoi tessellation}.
See \cref{fig:tessellations} for an illustration of these four random tessellation models.

Note that the value of the normalizing constants $c_{d+1,\beta}$ and $c^\prime_{d+1,\beta}$ is irrelevant for the main results of the paper.
Here we choose them such that they align with the normalizing constants used in \cite{gusakova_beta-delaunay_2022-1,gusakova_beta-delaunay_2023,gusakova_poisson-laguerre_2024,gusakova_beta-delaunay_2022-2,gusakova_beta-delaunay_2022-3,
gusakova_sectional_2024}.
There is however a distinction with these references.
We choose to define the model in $\R^d \times \R$ instead of $\R^{d-1}\times \R$ as in the aforementioned papers.

Following \cite{gusakova_beta-delaunay_2022-1}, in order to treat $\beta$- and $\beta'$-tessellations in a unified way we will use an additional variable. We put $\kappa = 1$ if the $\beta$-model is considered and $\kappa = -1$ in case of a $\beta'$-model. In this notation we can write that 
\begin{align*}
	\mu_\beta^\maybeprime 
	= \frac{\gamma c^\maybeprime_{d+1,\beta}}{\beta + \kappa} \, \lambda_d \otimes \lambda_\kappa^{(\kappa \beta + 1)}
\end{align*}
on the space $\R^d \times \R$, where we recall that $\lambda_d$ is the Lebesgue measure on $\R^d$ and where $\lambda_\kappa^{(r)}$ denotes a $r$-homogeneous measure on $\R$ defined by the formula: 
\begin{equation}
	\label{e:lambdakr}
	\lambda_\kappa^{(r)} (B) 
	\coloneq  \int_B \left|r\right| \, (\kappa t)^{r-1} \1{\kappa t > 0} \dint t ,
	\qquad \kappa \in \{-1,1\} ,\ r\in\R .	
\end{equation} 

The following lemma was stated first in \cite{gusakova_beta-delaunay_2022-1} and then reproved more rigorously in \cite{gusakova_poisson-laguerre_2024}.

\begin{lemma}[{\cite[Example 3.9]{gusakova_poisson-laguerre_2024}} Correctness of the model]
	Almost surely $\cD_\beta^\maybeprime$ and $\cV_\beta^\maybeprime$ are face-to-face tessellations of $\R^d$ for any $\beta$ in the allowed ranges, with $\cV_\beta^\maybeprime$ additionally being normal.
\end{lemma}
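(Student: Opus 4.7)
The plan is to verify that almost surely $\eta_\beta^\maybeprime$ satisfies the two deterministic hypotheses \eqref{e:well-spread} and \eqref{e:general-position}, and that its Laguerre diagram $\cL(\eta_\beta^\maybeprime)$ is a face-to-face tessellation of $\R^d$. The deterministic duality stated just after \eqref{e:Laguerre} will then immediately upgrade this to the face-to-face property of $\cWD(\eta_\beta^\maybeprime)$ and to normality of $\cL(\eta_\beta^\maybeprime)$.

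Condition \eqref{e:general-position} is a standard Mecke/Slivnyak null-event argument: for any fixed finite index pattern, the configurations in which $d+1$ prescribed atoms project to an affinely dependent set, or in which $d+2$ atoms lie on a common downward paraboloid, form a lower-dimensional subvariety of $(\R^d\times\R)^{d+1}$ or $(\R^d\times\R)^{d+2}$ and therefore carry zero mass under the absolutely continuous product intensity; the countable union of these null events over index patterns remains null. Condition \eqref{e:well-spread} is even more elementary: for any $u\in\Sph^{d-1}$, any $M>0$ and any bounded $I\subset\R$, the slab $\{(v,h) : \langle v,u\rangle > M,\ h\in I\}$ carries infinite $\mu_\beta^\maybeprime$-mass because the spatial factor alone does, hence almost surely contains atoms of $\eta_\beta^\maybeprime$; letting $u$ range over a countable dense subset of $\Sph^{d-1}$ forces $\conv(\{v : (v,h)\in\eta_\beta^\maybeprime\})=\R^d$ almost surely.

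The main obstacle is showing that $\cL(\eta_\beta^\maybeprime)$ is a genuine locally finite tessellation, which reduces to checking that for every $w\in\R^d$ the infimum $\inf\{\norm{w-v}^2 + h : (v,h)\in \eta_\beta^\maybeprime\}$ is almost surely finite and attained by only finitely many atoms. For the $\beta$-model this is trivial since $h\geq 0$ gives a universal lower bound and the competition set $\{(v,h) : \norm{w-v}^2 + h \leq s\}$ is bounded with finite $\mu_\beta$-mass. For the $\beta^\prime$-model the argument is delicate because heights are unbounded below, and the standing assumption $\beta > \frac{d}{2}+1$ is exactly what one needs: the substitution $s = -h$ followed by a rescaling $t = s/(-c)$ yields $\mu_\beta^\prime(\{(v,h) : \norm{w-v}^2 + h < c\})$ of order $(-c)^{d/2+1-\beta}$ as $c\to -\infty$, which tends to zero precisely under this assumption. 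A Markov bound then forces the infimum to be almost surely finite, while the same computation at any fixed threshold ensures only finitely many competitors, so the minimum is attained.

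Once the tessellation property is in hand, face-to-faceness and normality are formal consequences of general position: each $k$-face of $\cL(\eta_\beta^\maybeprime)$ arises as the common intersection of the $d-k+1$ cells indexed by a $(d-k+1)$-tuple of atoms lying on a common circumscribed downward paraboloid whose interior is disjoint from $\eta_\beta^\maybeprime$, and \eqref{e:general-position} rules out any further coincidence that could violate face-to-faceness or inflate the number of cells sharing a face. Invoking the deterministic duality preceding \cref{duality} then transfers the tessellation structure to $\cWD(\eta_\beta^\maybeprime)$ and completes the proof.
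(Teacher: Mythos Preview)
The paper does not prove this lemma: it is quoted directly from \cite[Example~3.9]{gusakova_poisson-laguerre_2024} (and earlier \cite{gusakova_beta-delaunay_2022-1}) with no argument supplied in the present text. Your proposal is therefore not being compared against an in-paper proof; rather, you have written a self-contained sketch where the paper simply defers to the literature.

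As a sketch your outline is sound. The null-set arguments for \eqref{e:general-position} and \eqref{e:well-spread} are standard and correct. Your key computation in the $\beta'$ case---that $\mu_\beta'\bigl(\{(v,h):\norm{w-v}^2+h<c\}\bigr)$ is of order $(-c)^{d/2+1-\beta}\to 0$ as $c\to-\infty$---is exactly \cref{improp}\cref{improp-para}, since that set is $\Pi^\downarrow_{(w,c)}$; you could cite it rather than redo the substitution. One point is slightly under-argued: the reduction ``tessellation $\Leftrightarrow$ infimum finite and attained by finitely many atoms'' covers the covering property but not \emph{local finiteness}. You still need that any bounded $B\subset\R^d$ meets only finitely many nonempty cells; this follows because any atom $(v,h)$ whose cell meets $B$ must satisfy $\norm{w-v}^2+h\le \norm{w-v_0}^2+h_0$ for some $w\in B$ and one fixed atom $(v_0,h_0)$, which confines $(v,h)$ to a paraboloid of finite $\mu_\beta^\maybeprime$-mass. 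With that added, the appeal to the deterministic duality (as the paper itself does via \cite{aurenhammer_power_1987,schlottmann_periodic_1993}) legitimately yields face-to-faceness of $\cWD$ and normality of $\cL$.
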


Let us also examine closer the intensity measures of the underlying Poisson point processes.

\begin{lemma}[Properties of intensity measures]
	\label{improp}
	\begin{enumerate}
		\item[]
		\item \label{improp-homo} The measures $\mu_\beta^\maybeprime$ are homogeneous of degree $\kappa 2 \beta + d + 2$ under the scaling operator and translation invariant: For $c>0$, $v\in\R^d$ and a Borel set $A\in \R^d \times \R$,
		$$\mu_\beta^\maybeprime (v+\scale{c} A) = c^{\kappa 2 \beta + d + 2} \mu_\beta^\maybeprime(A).$$
		\item \label{improp-para} For any $v\in \R^d$ and $\kappa h > 0$,
		$$
		\mu_\beta^\maybeprime (\Pi^\downarrow_{(v,h)}) = \gamma \frac{c^\maybeprime_{d+1,\beta}}{c^\maybeprime_{d,\beta}}
		\frac{(\kappa h)^{\kappa \beta + \frac{d}{2} + 1}}{|\kappa \beta + \frac{d}{2} + 1|}
		< \infty.$$
	\end{enumerate}
\end{lemma}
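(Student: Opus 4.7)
The plan is to first rewrite $\mu_\beta^\maybeprime$ in its fully explicit form by unfolding the definition \eqref{e:lambdakr} of $\lambda_\kappa^{(\kappa\beta+1)}$. This gives $\lambda_\kappa^{(\kappa\beta+1)}(\dint h) = |\kappa\beta+1|\,(\kappa h)^{\kappa\beta}\1{\kappa h>0}\dint h$, and one checks that in both admissible parameter ranges ($\beta>-1$ for $\kappa=1$, $\beta>\frac{d}{2}+1$ for $\kappa=-1$) the ratio $|\kappa\beta+1|/(\beta+\kappa)$ equals $1$. Hence
\[
\mu_\beta^\maybeprime(\dint v,\dint h)=\gamma\,c^\maybeprime_{d+1,\beta}\,(\kappa h)^{\kappa\beta}\1{\kappa h>0}\dint v\dint h,
\]
and the two claims reduce to elementary integral manipulations.

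For item \ref{improp-homo}, I would apply the change of variables $(v',h')=\scale{c}(v,h)=(cv,c^2h)$ inside the integral expressing $\mu_\beta^\maybeprime(\scale{c} A)$. The Lebesgue Jacobian produces the factor $c^{d+2}$, the density $(\kappa c^2 h)^{\kappa\beta}=c^{2\kappa\beta}(\kappa h)^{\kappa\beta}$ produces the additional factor $c^{2\kappa\beta}$, while the indicator $\1{\kappa h'>0}$ is unaffected. Combining them yields the total homogeneity exponent $\kappa 2\beta+d+2$.

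For item \ref{improp-para}, translation invariance of the Lebesgue measure in the first variable reduces the computation to the case $v=\origin$, where $\Pi^\downarrow_{(\origin,h)}=\{(w,s)\in\R^d\times\R : s\le h-\norm{w}^2\}$. I would then apply Fubini to integrate $s$ first over the half-line $\kappa\R_+\cap(-\infty,h-\norm{w}^2]$; this is an elementary power integral whose value is $(\kappa(h-r^2))^{\kappa\beta+1}/|\kappa\beta+1|$ for $\kappa=1$ (valid on $r<\sqrt h$) and $(r^2-h)^{1-\beta}/(\beta-1)$ for $\kappa=-1$ (valid for all $r\ge 0$), where $r=\norm{w}$. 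Passing to spherical coordinates on $\R^d$ picks up $\omega_d\,r^{d-1}\dint r$, and the substitution $r^2=h t$ (for $\kappa=1$) or $r^2=-h\,t/(1-t)$ (for $\kappa=-1$) puts the radial integral into the form of a Beta function $B(\tfrac{d}{2},\kappa\beta+\tfrac{d}{2}+2)$ or $B(\tfrac{d}{2},\beta-1-\tfrac{d}{2})$ respectively.

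The remaining step, and the only real bookkeeping obstacle, is to identify the product of Gamma factors that emerges with $c_{d+1,\beta}^\maybeprime/c_{d,\beta}^\maybeprime$. Writing the general formula $c_{n,\beta}^\maybeprime=\Gamma(\tfrac{n}{2}+\kappa\beta+1)/(\pi^{n/2}\Gamma(\kappa\beta+1))$ (which matches the two stated instances), the identity $\Gamma(\tfrac{d}{2}+\kappa\beta+2)=(\tfrac{d}{2}+\kappa\beta+1)\Gamma(\tfrac{d}{2}+\kappa\beta+1)$ collapses the Beta-function expression exactly to $c_{d+1,\beta}^\maybeprime/(c_{d,\beta}^\maybeprime\,|\kappa\beta+\tfrac{d}{2}+1|)$, completing the proof. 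Convergence of the radial integral in the $\beta'$-case is precisely the place where the assumption $\beta>\tfrac{d}{2}+1$ is used; it also guarantees $|\kappa\beta+\tfrac{d}{2}+1|>0$, so the denominator in the statement is meaningful and the right-hand side is finite.
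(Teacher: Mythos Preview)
Your approach is essentially identical to the paper's: a change of variables for item~\ref{improp-homo}, and for item~\ref{improp-para} integrating the height variable first, passing to spherical coordinates, and reducing to a Beta integral. The only structural difference is that the paper first invokes item~\ref{improp-homo} to reduce item~\ref{improp-para} to the single case $h=\kappa$, whereas you carry a general $h$ throughout; both routes work equally well.

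Two bookkeeping slips to correct. First, for $\kappa=1$ the radial integral after the substitution $r^2=ht$ yields $B(\tfrac{d}{2},\beta+2)$, not $B(\tfrac{d}{2},\beta+\tfrac{d}{2}+2)$: the exponent on $(1-t)$ is $\beta+1$, inherited from the inner $s$-integration, so the second Beta argument is $(\beta+1)+1$. Second, your ``unified'' formula $c_{n,\beta}^\maybeprime=\Gamma(\tfrac{n}{2}+\kappa\beta+1)/\bigl(\pi^{n/2}\Gamma(\kappa\beta+1)\bigr)$ does \emph{not} reproduce the paper's $c'_{n,\beta}=\Gamma(\beta)/\bigl(\pi^{n/2}\Gamma(\beta-\tfrac{n}{2})\bigr)$ when $\kappa=-1$; your version would feed negative arguments to $\Gamma$. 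You need to track the $\kappa=-1$ constants separately (as the paper does), after which the Gamma-function simplification you outline goes through exactly.
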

The quantity in \cref{improp-para} was shown to be finite in the proof of \cite[Lemma 3.4]{gusakova_beta-delaunay_2022-1}, and in particular computed for $\kappa=-1$. Nevertheless, we derive both quantities here as well for the sake of completeness.
\begin{proof}
	\cref{improp-homo} is easy to observe by inserting a change of parameters $x = v+\scale{c} y$ into the integrals that define measures in question.
	For \cref{improp-para}, note that due to \cref{improp-homo} and \cref{scprop}, it is sufficient to check that
	$\mu_\beta^\maybeprime (\Pi^\downarrow_{(\origin,\kappa)}) = \gamma \frac{c^\maybeprime_{d+1,\beta}}{|\kappa \beta + \frac{d}{2} + 1| c^\maybeprime_{d,\beta}}.$
	We consider the cases $\kappa = 1$ and $\kappa = -1$ separately.
	In the first case,
	\begin{align*}
		\mu_\beta(\Pi^\downarrow_{(\origin,1)})
		&= \gamma c_{d+1,\beta} \int_{\R_+} \int_{\R^d} \1{h\le 1 - \norm{v}^2} h^\beta \dint v \dint h
		\\
		&= \gamma c_{d+1,\beta}  \omega_d \int_0^1 r^{d-1} \int_0^{1-r^2} h^\beta \dint h \dint r
		\\
		&= \frac{\gamma c_{d+1,\beta}  \omega_d}{\beta+1} \int_0^1 r^{d-1} (1-r^2)^{\beta+1} \dint r.
	\end{align*}
	With the substitution $t=r^2$, the integral evaluates to $\frac{1}{2} B\left(\frac{d}{2},\beta+2\right)$. 
	After simplifying the expression it remains that indeed
	\(
		\mu_\beta(\Pi^\downarrow_{(\origin,1)})
		= \gamma \frac{c_{d+1,\beta}}{(\beta + \frac{d}{2} + 1) c_{d,\beta}}.
	\)
	The second case is handled similarly.
	\begin{align*}
		\mu_\beta^\prime(\Pi^\downarrow_{(\origin,-1)})
		&= \gamma c^\prime_{d+1,\beta} \int_{\R^d} \int_{\R_-} \1{h\le -1 - \norm{v}^2} (-h)^{-\beta}  \dint h \dint v
		\\
		&= \gamma c^\prime_{d+1,\beta} \omega_d \int_0^{\infty} r^{d-1} \int_{-\infty}^{-1-r^2} (-h)^{-\beta} \dint h \dint r
		\\
		&= \frac{\gamma c^\prime_{d+1,\beta} \omega_d}{\beta-1} \int_0^{\infty} r^{d-1} (1+r^2)^{-\beta+1} \dint r.
	\end{align*}	
	This time we use the substitution $t= \frac{1}{1+r^2}$ to find that the integral evaluates to $\frac{1}{2} B\left(\beta - \frac{d}{2} - 1, \frac{d}{2}\right)$.
	After simplifying, it remains that
	\(
		\mu_\beta^\prime(\Pi^\downarrow_{(\origin,-1)})
		= \gamma \frac{c^\prime_{d+1,\beta}}{(\beta - \frac{d}{2} - 1) c^\prime_{d,\beta}} 
	\).
\end{proof}

\begin{rem}
	Notice that changing the intensity parameter $\gamma$ is equivalent to rescaling $\eta^\maybeprime_\beta$ via the scaling operator $\phi$. As we showed, this scaling acts homogeneously on the measures and preserves paraboloids.
\end{rem}

The following corollary bounds the probability of an added point becoming a new vertex.

\begin{corollary}
	\label{added-vertex}
	Let $v\in \R^d$. Let $\kappa h > 0$.
	Then
	$$
	e^{-
		2^d c
		(\kappa h)^{\kappa \beta + \frac{d}{2} + 1}
	}
	\le
	\P{v\in \cF_0(\cD(\eta_\beta^\maybeprime \cup \{(v,h)\}))}
	\le
	2^d e^{
		-c(\kappa h)^{\kappa \beta + \frac{d}{2} + 1}
	} ,
	$$
	with $c = \frac{1}{2^d}\mu_\beta^\maybeprime(\Pi^\downarrow_{(\origin,\kappa)})$, calculated in \cref{improp}.
\end{corollary}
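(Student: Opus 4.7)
The first step is to recast the event in terms of paraboloids. By the duality between the Laguerre and weighted Delaunay tessellations, $v$ is a vertex of $\cWD(X)$ for $X=\eta_\beta^\maybeprime\cup\{(v,h)\}$ if and only if the Laguerre cell $\cell_X(v,h)$ is full-dimensional. Inspecting the definition \eqref{e:Laguerre}, this is in turn equivalent to the existence of an apex $w\in\R^d$ such that the interior of the paraboloid $\Pi^\downarrow_w(v,h)$ contains no point of $\eta_\beta^\maybeprime$.

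For the lower bound, I would specialize to $w=v$, so that $\Pi^\downarrow_v(v,h)=\Pi^\downarrow_{(v,h)}$. The void probability of the Poisson process then yields
\[
\P{v\in\cF_0(\cWD(X))}\ge \P{\Int(\Pi^\downarrow_{(v,h)})\cap\eta_\beta^\maybeprime=\emptyset}=e^{-\mu_\beta^\maybeprime(\Pi^\downarrow_{(v,h)})}.
\]
Translation invariance of $\mu_\beta^\maybeprime$ in the spatial coordinate together with \cref{scprop} identifies $\Pi^\downarrow_{(v,h)}$ with a translate of $\scale{\sqrt{\kappa h}}\Pi^\downarrow_{(\origin,\kappa)}$, and the homogeneity of \cref{improp-homo} then gives $\mu_\beta^\maybeprime(\Pi^\downarrow_{(v,h)})=2^d c(\kappa h)^{\kappa\beta+d/2+1}$, which is the desired exponent.

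For the upper bound, I would partition the candidate apices according to the $2^d$ closed orthants $O_s=\{u\in\R^d\mid s_i u_i\ge 0\text{ for all }i\}$, indexed by $s\in\{-1,+1\}^d$. The key elementary observation is that for any $u\in O_s$, the paraboloid $\Pi^\downarrow_{v+u}(v,h)$ contains the orthant slice
\[
S_s\coloneqq\Pi^\downarrow_{(v,h)}\cap\{(v',z)\mid v'-v\in O_s\}.
\]
This follows from expanding $\norm{v'-v-u}^2=\norm{v'-v}^2-2\langle v'-v,u\rangle+\norm{u}^2$ and noticing that $\langle v'-v,u\rangle\ge 0$ whenever both vectors lie in $O_s$. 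Consequently, the event of interest is contained in $\bigcup_s\{S_s\cap\eta_\beta^\maybeprime=\emptyset\}$. Since $\mu_\beta^\maybeprime$ is invariant under reflections in the spatial coordinate, $\mu_\beta^\maybeprime(S_s)=\tfrac{1}{2^d}\mu_\beta^\maybeprime(\Pi^\downarrow_{(v,h)})=c(\kappa h)^{\kappa\beta+d/2+1}$ for every $s$, and a union bound delivers the factor $2^d$ in front of the exponential.

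The only non-routine ingredient is the orthant-slice containment; it amounts to a short norm expansion but is the geometric source of the factor-$2^d$ gap between the upper and lower exponents.
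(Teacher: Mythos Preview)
Your proof is correct and follows essentially the same approach as the paper: both use the empty paraboloid $\Pi^\downarrow_{(v,h)}$ for the lower bound, and for the upper bound both slice $\Pi^\downarrow_{(v,h)}$ into $2^d$ orthant pieces, show each slice is contained in any paraboloid through $(v,h)$ whose apex lies in the corresponding orthant, and conclude via a union bound. Your norm expansion for the orthant-slice containment is exactly the computation the paper leaves implicit.
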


\begin{proof}
	The key observation is that for the new point to become a vertex, some paraboloid $\Pi^\downarrow_{(\tilde{v},\tilde{h})} \ni (v,h)$ needs to be empty of points of $\eta_\beta^\maybeprime$.
	Suppose this is the case for some $(\tilde{v},\tilde{h})\ne (v,h)$. Find an orthant $O\ni \tilde{v} - v$. Then
	$$\Pi^\downarrow_{(v,h)} \cap ((O+v)\times\R) \subset \Pi^\downarrow_{(\tilde{v},\tilde{h})}.$$
	In other words, the following is true.
	$$
	\left\{
		\eta_\beta^\maybeprime \cap 	\Pi^\downarrow_{(v,h)} = \emptyset
	\right\}
	\subset
	\left\{
		v\in \cF_0(\cD(\eta_\beta^\maybeprime \cup \{(v,h)\}))
	\right\}
	\subset 
	\bigcup_{O\text{ is an orthant}} \left\{
	\eta_\beta^\maybeprime \cap 	\Pi^\downarrow_{(v,h)} \cap ((O+v)\times\R) = \emptyset
	\right\}.
	$$
	After taking probabilities of these events and bounding the probability of a union by a sum, we are done.
\end{proof}

\section{Complementary theorem}
\label{sec:complementary}

In this section we describe the distribution of the typical $\beta^\maybeprime$-Voronoi cell, conditioned on its number of facets to be fixed to a given value $k$.
The constructions of such cells entails the following ingredients:
\begin{enumerate}
	\item The underlying point process is \textit{Poisson}. 
	\item The \textit{number of points involved is fixed}. Here, we have $1$ point for the nucleus of the cell and $k$ points for the nucleus of adjacent cells.
	\item These points are in an \textit{acceptable configuration}. Here this means that, apart from the nucleus, each of them contributes to a facet of the cell. 
	\item The points determine an \textit{excluding region}. Here this is the Voronoi flower which is a union of paraboloids, see \Cref{e:flower} and \Cref{fig:parpro}. This flower should not contain any other points of the process. 
	\item There is a \textit{scaling} action for which the intensity measure of the Poisson process is \textit{homogeneous} and the set of acceptable configurations is \textit{stable}.
\end{enumerate}
We will use these ingredients to show \Cref{complementary} at the end of this section.
Theorems of this form and based on similar ingredients have been formulated multiple times in various contexts. They are called \textit{complementary theorems}. The first one was discovered by Miles in the context of Poisson flats \cite{miles1970,miles1971complementary,miles1974},
and later extended in various directions by
Møller and Zuyev \cite{moller_zuyev1996GammaType}, 
Zuyev \cite{zuyev_stopping_1999}, 
Cowan \cite{cowan2006Complementary}, 
Baumstark and Last \cite{baumstark_gamma_2009} 
and Bonnet, Calka and Reitzner \cite{bonnet_cells_2018}.

\subsection{Typical cell}
\label{s:typ-cell}

Recall that $\mathbb{K}$ denotes the set of convex bodies and let us consider the spaces
$$\widehat{\K} = (\R^d \times\R) \times \mathbb{K}
\qquad \text{and} \qquad
\widehat{\K}_\origin = \{((v,h),\convexbody) \in \widehat{\K} \mid v = \origin\},$$
equipped with the topology derived from the Hausdorff distance for convex bodies and Euclidean distance for points.
The \textbf{typical cell} of $\cV_\beta^\maybeprime$ is a random element $\smash{\extypcellany} = ((\origin,\smash{h_\typcellany}),\smash{\typcellany})$ of $\widehat{\K}_\origin$ which has distribution
$$\P{\smash{\extypcellany} \in D} 
= \frac{\E{\left|\{x = (v,h)\in \eta_\beta^\maybeprime \cap B \times \R \mid (x,\cell_{\eta_\beta^\maybeprime}(x))\in D +v \}\right|}}
{\E{\left|\cF_0(\cD_\beta^\maybeprime)\cap B\right|}},
\qquad D \subset \widehat{\K}_\origin,$$
for an arbitrary bounded Borel set $B \subset \R^d$ of positive Lebesgue measure, and where $D$ is a Borel set.
Note that $(x,\cell_{\eta_\beta^\maybeprime}(x))\in D$ implies that $\cell_{\eta_\beta^\maybeprime}(x)\ne \emptyset$ and therefore that $v\in \cF_0(\cD_\beta^\maybeprime)$.

With $\smash{\extypcellany}$, we omit $\beta$ to avoid overcrowded notation, implying it is fixed in each particular consideration.

Due to linearity of the expectation and the stationarity of the Poisson point process with respect to the first entry, any measurable set $B$ of positive Lebesgue measure can be chosen for this definition. We will often use the unit cube $B=[0,1]^d$ for this purpose and use the notation
$$\Lambda_\beta^\maybeprime\coloneqq \E{\left|\cF_0(\cD_\beta^\maybeprime)\cap [0,1]^d\right|} $$
for the denominator, which can also be interpreted as the vertex densities of the respective triangulations.
Note that due to the Mecke equation (see e.g.\ \cite[Theorem 4.1]{last_lectures_2017}), we have
\begin{align*}
	\Lambda_\beta^\maybeprime
	&= \int_{\kappa \R_+} \P{\origin\in \cF_0(\cD(\eta_{\beta;(\origin,h)}^\maybeprime))} \gamma c_{d+1,\beta}^\maybeprime (\kappa h)^{\kappa \beta} \dint h
	= \Theta(1) \int_{\kappa \R_+}  e^{-\Theta((\kappa h)^{\kappa \beta + d/2 + 1})} (\kappa h)^{\kappa \beta} \dint h ,
\end{align*}
where the approximation is due to \cref{added-vertex}. From this, we can deduce that $\Lambda_\beta^\maybeprime$ is finite (and positive) for any $\beta$ and $\beta'$ within their admissible ranges. In particular, the typical cell is well defined in both models.

The following lemma gives a direction for investigating typical cells.

\begin{lemma}
	\label{typical-distribution}
	Let $D\subset \widehat{\K}_\origin$ be a Borel set. Then
	$$
	\P{\smash{\extypcellany}\in D} 
	= \frac{\gamma c^\maybeprime_{d+1,\beta}}{\Lambda_\beta^\maybeprime}
		\int_{\kappa \R_+} 
		\P{((\origin,h),\cell_{\eta_{\beta;(\origin,h)}^\maybeprime}(\origin,h))\in D}
		(\kappa h)^{\kappa \beta} \dint h,
	$$	
	where $\eta_{\beta;x}^\maybeprime\coloneqq \eta_\beta^\maybeprime \cup \{x\}$ is the Palm distribution of $\eta_\beta^\maybeprime$ conditioned on containing element $x$.
\end{lemma}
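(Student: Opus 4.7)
The plan is to take $B = [0,1]^d$ in the definition of the typical cell, so that the denominator equals $\Lambda_\beta^\maybeprime$, and then rewrite the expectation in the numerator using the Mecke (Campbell) formula for the Poisson process $\eta_\beta^\maybeprime$. Writing $x = (v,h)$ and setting
\[
    g(x,\eta) \coloneqq \1{v\in[0,1]^d}\,\1{((\origin,h),\cell_\eta(x)-v)\in D},
\]
the numerator is $\E{\sum_{x\in\eta_\beta^\maybeprime} g(x,\eta_\beta^\maybeprime)}$, which by Mecke equals
\[
    \int_{\R^d\times\R} \E{g(x,\eta_\beta^\maybeprime\cup\{x\})}\,\mu_\beta^\maybeprime(\dint x) .
\]
Unpacking $\mu_\beta^\maybeprime$ in the unified form already given in the paper, $\mu_\beta^\maybeprime(\dint v,\dint h) = \gamma c^\maybeprime_{d+1,\beta}(\kappa h)^{\kappa\beta}\1{\kappa h>0}\dint v\dint h$, this is a double integral over $v\in[0,1]^d$ and $h\in\kappa\R_+$.

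Next, I would exploit the spatial translation invariance of $\mu_\beta^\maybeprime$ to kill the dependence on $v$. Let $T_v\colon(w,g)\mapsto(w-v,g)$; since $\lambda_d$ is translation invariant and the height factor is untouched, $T_v \eta_\beta^\maybeprime \stackrel{d}{=} \eta_\beta^\maybeprime$. The Laguerre cell in \eqref{e:Laguerre} transforms equivariantly under spatial translations, i.e.\ $\cell_{T_v\eta}(T_v x) = \cell_\eta(x) - v$, so
\[
    \P{((\origin,h),\cell_{\eta_\beta^\maybeprime\cup\{(v,h)\}}(v,h)-v)\in D}
    = \P{((\origin,h),\cell_{\eta_\beta^\maybeprime\cup\{(\origin,h)\}}(\origin,h))\in D} ,
\]
which no longer depends on $v$. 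The $v$-integral over $[0,1]^d$ then contributes a factor $1$, and dividing by $\Lambda_\beta^\maybeprime$ yields exactly the stated identity.

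The routine bookkeeping is checking that (i) the integrand in the Mecke formula is measurable and (ii) the translation $D+v$ used in the definition of $\extypcellany$ is really the one compatible with $T_v$-equivariance of Laguerre cells. The only substantive point I would want to handle carefully is the translation-invariance step: I need $\eta_\beta^\maybeprime \cup \{(\origin,h)\}$ to be interpreted as the Palm version conditioned on containing $(\origin,h)$, which by Slivnyak's theorem for Poisson processes coincides in distribution with adding the point $(\origin,h)$ to an independent copy of $\eta_\beta^\maybeprime$. This identification is what allows the inner probability to be written against $\eta_{\beta;(\origin,h)}^\maybeprime$, and it is the one place where Poisson structure (as opposed to mere stationarity) is genuinely used.
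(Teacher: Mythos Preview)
Your proposal is correct and follows essentially the same approach as the paper: start from the definition with $B=[0,1]^d$, apply the Mecke equation, and use spatial translation invariance to eliminate the $v$-dependence so that the integral over $[0,1]^d$ contributes a factor $1$. The paper's proof is terser---it simply asserts that ``the integrand doesn't depend on $v$'' where you spell out the equivariance $\cell_{T_v\eta}(T_v x) = \cell_\eta(x) - v$ and the distributional invariance $T_v\eta_\beta^\maybeprime \stackrel{d}{=} \eta_\beta^\maybeprime$---but the substance is identical.
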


\begin{proof}
	By definition
	$$
	\P{\smash{\extypcellany}\in D}
	= \frac{1}{\Lambda_\beta^\maybeprime}
	\E{
		\sum_{x=(v,h)\in \eta_\beta^\maybeprime \cap [0,1]^d \times \R}
		\1{(x,\cell_{\eta_\beta^\maybeprime}(x))\in D + v}
	}.
	$$	
	As a consequence of the Mecke equation (see e.g.\ \cite[Theorem 4.1]{last_lectures_2017}), it is equal to
	$$
	\frac{1}{\Lambda_\beta^\maybeprime}
	\int_{[0,1]^d \times \R} 
	\P{((v,h),\cell_{\eta_{\beta;(v,h)}^\maybeprime}(v,h))\in D + v}
	\mu_\beta^\maybeprime (\dint v, \dint h).
	$$
	Since the integrand does not depend on $v$, it can be further simplified to achieve the statement of the lemma.
\end{proof}

From the last lemma, we derive two corollaries below, which will be essential in the proof of \cref{concentration} on the concentration of maximal degree of a $\beta$-Delaunay graph in a growing window.
First we get some bounds on the distribution of a typical height, which is the height $\smash{h_\typcellany}$ of the typical cell $\smash{\extypcellany} = ((\origin,\smash{h_\typcellany}),\smash{\typcellany})$.

\begin{corollary}
	\label{typical-height}
	For any $\kappa \height >0$, we have
	\begin{align}
		\label{e:bounds-typ-height}
		C_1 \Gamma(z, 2^d c(\kappa \height)^{\kappa\beta + d/2 + 1})
		\le
		\P{\smash{h_\typcellany} \ge \height}
		\le
		C_2 \Gamma(z, c(\kappa \height)^{\kappa\beta + d/2 + 1})
	\end{align}
	for $c>0$ as in \cref{added-vertex},
	$z = \frac{\kappa\beta + 1}{\kappa \beta + d/2 + 1}$,
	$C_1=\frac{\gamma c^\maybeprime_{d+1,\beta}}{\Lambda^\maybeprime_\beta}\frac{1}{(2^d c)^z |\kappa \beta + d/2 +1|}$ and
	$C_2=\frac{\gamma c^\maybeprime_{d+1,\beta}}{\Lambda^\maybeprime_\beta}\frac{2^d}{c^z |\kappa \beta + d/2 +1|}$.
	
	In particular, there exist constants $c' > 0$ and $\height_0 \in \kappa \R_{>0}$, such that for any $\height>\height_0$,
	\begin{align}
		\label{e:UB-typ-height}
		\P{\smash{h_\typcellany} \ge \height}
		&\leq e^{- c' (\kappa \height)^{\kappa \beta + \frac{d}{2} + 1}} .
	\end{align} 
\end{corollary}
Equation \eqref{e:bounds-typ-height} gives bounds on the tail distribution of the typical height, i.e.\ on $\P{\smash{h_\typcellany} \ge \height}$ which goes to zero for $\height \uparrow \infty$ in $\beta$-Delaunay model ($\kappa = 1$), and for $\height \uparrow 0$ in $\beta'$-Delaunay model ($\kappa = -1$). 
\begin{proof}
	By \cref{typical-distribution} applied to $D = \{((\origin,h),\convexbody) \in \widehat{\K}_\origin \mid h \ge \height\}$, we have
	$$
		\P{\smash{h_\typcellany} \ge \height} 
		= \frac{\gamma c^\maybeprime_{d+1,\beta}}{\Lambda^\maybeprime_\beta}
		\int_{\kappa \R_+}
		\1{h\ge \height}
		\P{\origin\in \cF_0(\cD(\eta_{\beta;(\origin,h)}^\maybeprime))}
		(\kappa h)^{\kappa \beta} \dint h.
	$$
	The probability in the integrand is bounded by \cref{added-vertex}.
	This gives the bounds
	\begin{align*}
		\P{\smash{h_\typcellany} \ge \height}
		& \geq \frac{\gamma c^\maybeprime_{d+1,\beta}}{\Lambda^\maybeprime_\beta}
		\int_{\kappa \R_+} \1{h\ge \height} e^{-2^d c(\kappa h)^{\kappa \beta + d/2 + 1}} (\kappa h)^{\kappa \beta} \dint h ,
		\\
		\P{\smash{h_\typcellany} \ge \height}
		& \leq \frac{\gamma c^\maybeprime_{d+1,\beta}}{\Lambda^\maybeprime_\beta}
		\int_{\kappa \R_+} \1{h\ge \height} 2^d e^{-c(\kappa h)^{\kappa \beta + d/2 + 1}} (\kappa h)^{\kappa \beta} \dint h,
	\end{align*}
	where $c = \frac{1}{2^d}\mu_\beta^\maybeprime(\Pi^\downarrow_{(\origin,\kappa)})$ as in \cref{added-vertex}.
	The bounds \eqref{e:bounds-typ-height} follow from applying the substitutions $t = c_i (\kappa h)^{\kappa \beta + d/2 + 1}$ with $c_i$ being either $c_1 = 2^d c$ or $c_2 = c$, for which we have
	\[ h = \kappa \left(\frac{t}{c_i}\right)^{\frac{1}{\kappa \beta + d/2 + 1}} ,
	\qquad 
	\frac{\gamma c^\maybeprime_{d+1,\beta}}{\Lambda^\maybeprime_\beta} (\kappa h)^{\kappa \beta} \dint h 
	= C_i t^{z-1} \dint t,
	\qquad
	\1{h\ge \height} 
	= \1{t \ge c_i (\kappa \height)^{\kappa \beta + d/2 + 1}}.
	\]
	
	The last part is due to \cref{incomplete-gamma} which, for $h>0$ with $z\leq \max(1,c h^{\kappa\beta + d/2 + 1})$, gives
	\begin{align*}
		C_2 \Gamma(z, c h^{\kappa\beta + d/2 + 1})
		&\leq C_2 \max(z,1) e^{-c h^{\kappa \beta + d/2 + 1}} (c h^{\kappa \beta + d/2 + 1})^{z-1}
		\leq 
		C e^{-c h^{\kappa \beta + d/2 + 1}} h^{-d/2} ,
	\end{align*}
	for some finite constant $C$.
	In the $\beta$-Delaunay model ($\kappa=1$), $z\leq 1$ and thus the condition $z\leq \max(1,c h^{\kappa\beta + d/2 + 1})$ is satisfied for any $h>0$, and we get the desired bound by setting $c' = c$ and $\height_0 = C^{2/d}$.
	In the $\beta'$-Delaunay model ($\kappa=-1$), we need to be more cautious because $z\geq 1$ and $(- H)^{-d/2}\uparrow \infty$ as $H\uparrow 0$. 
	Here, we pick $c' \in (0,c)$ arbitrarily and set $\height_0<0$ sufficiently close to $0$, such that $c (-\height_0)^{- \beta + d/2 + 1} \geq z$ and $e^{(c-c')(- \height)^{\kappa \beta + d/2 + 1}} \geq C (-\height)^{-d/2}$ for any $\height \in (\height_0,0)$.
\end{proof}

This in turn allows us to investigate the tail distribution of the number of vertices of the triangulation in a given set.

\begin{corollary}
	\label{number-of-vertices}
	There is a $t_0>0$ such that for $t>t_0$ and for any Borel set $B\subset \R^d$,
	$$\P{|\cF_0(\cD_\beta^\maybeprime) \cap B| \ge tV_d(B)}
	\le e^{-tV_d(B)} + C V_d(B) e^{-ct^{\frac{\kappa \beta + d/2 + 1}{\kappa \beta + 1}}},$$
	for some constants $c,C\in(0,\infty)$ that do not depend on $B$ or $t$.
\end{corollary}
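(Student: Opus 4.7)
The plan is to introduce a height threshold $H_\ast = H_\ast(t) > 0$ and split the atoms of $\eta_\beta^\maybeprime$ inside $B \times \R$ according to the height coordinate $h$. Define the \emph{sparse} region $I_{\mathrm{sp}}$ to be $(0, H_\ast)$ if $\kappa = 1$ and $(-\infty, -H_\ast)$ if $\kappa = -1$, and let $I_{\mathrm{de}}$ be the complementary part of the support of $\mu_\beta^\maybeprime$, namely $(H_\ast, \infty)$ or $(-H_\ast, 0)$ respectively. By construction $\mu_\beta^\maybeprime(B \times I_{\mathrm{sp}}) < \infty$, so the Poisson count $P_{\mathrm{sp}} := |\eta_\beta^\maybeprime \cap (B \times I_{\mathrm{sp}})|$ is a.s.\ finite, whereas individual atoms of $\eta_\beta^\maybeprime \cap (B \times I_{\mathrm{de}})$ each have a very small probability of being a vertex of $\cD_\beta^\maybeprime$ by \cref{added-vertex}. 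Letting $N_B^{\mathrm{de}}$ count those vertices of $\cD_\beta^\maybeprime$ in $B$ whose associated atoms lie in $I_{\mathrm{de}}$, I would use the decomposition $N_B := |\cF_0(\cD_\beta^\maybeprime) \cap B| \le P_{\mathrm{sp}} + N_B^{\mathrm{de}}$ and the union bound
$$
    \P{N_B \ge t V_d(B)} \le \P{P_{\mathrm{sp}} \ge t V_d(B)} + \P{N_B^{\mathrm{de}} \ge 1}.
$$

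For the first term, a direct integration of $\mu_\beta^\maybeprime$ shows $\E{P_{\mathrm{sp}}} = c_1 V_d(B) H_\ast^{\kappa\beta + 1}$ in both models, for an explicit positive constant $c_1$. Choosing $H_\ast = \alpha\, t^{1/(\kappa\beta + 1)}$ with $\alpha$ small pins $\E{P_{\mathrm{sp}}}$ to a fixed small multiple of $t V_d(B)$, and a standard Chernoff bound for Poisson random variables then yields $\P{P_{\mathrm{sp}} \ge t V_d(B)} \le e^{-t V_d(B)}$ for $t$ above a threshold $t_0$ depending only on $\beta$, $d$, $\gamma$ (not on $B$).

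For the second term, the Mecke formula combined with \cref{added-vertex} gives
$$
    \E{N_B^{\mathrm{de}}} \le 2^d \gamma\, c^\maybeprime_{d+1, \beta}\, V_d(B) \int_{u \in \kappa I_{\mathrm{de}}} e^{-c u^{\kappa\beta + d/2 + 1}}\, u^{\kappa\beta}\, du,
$$
where the domain of integration in $u = \kappa h$ is $(H_\ast, \infty)$ for $\kappa = 1$ and $(0, H_\ast)$ for $\kappa = -1$. For $\kappa = 1$ the substitution $s = c u^{\beta + d/2 + 1}$ converts this directly into an incomplete gamma $\Gamma(a, c H_\ast^{\beta + d/2 + 1})$ with $a = (\beta + 1)/(\beta + d/2 + 1) \in (0, 1)$. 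For $\kappa = -1$, I would first invert via $v = 1/u$ to recast the integral as $\int_{1/H_\ast}^{\infty} e^{-c v^{\beta - d/2 - 1}} v^{\beta - 2}\, dv$, and then the analogous substitution leads to $\Gamma(a, c H_\ast^{-(\beta - d/2 - 1)})$ with $a = (\beta - 1)/(\beta - d/2 - 1) > 1$. In either case the argument of the incomplete gamma tends to infinity with $t$, so \cref{incomplete-gamma} applies for $t > t_0$; plugging in $H_\ast = \alpha\, t^{1/(\kappa\beta + 1)}$ then produces $\E{N_B^{\mathrm{de}}} \le C V_d(B)\, e^{-c' t^{(\kappa\beta + d/2 + 1)/(\kappa\beta + 1)}}$, up to polynomial-in-$t$ factors absorbable into the coefficient $t V_d(B)$ and the constant $c'$. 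Markov's inequality caps $\P{N_B^{\mathrm{de}} \ge 1}$ by this expectation, delivering the second term of the claimed bound.

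The main obstacle is that the roles of the sparse and dense sides of $H_\ast$ swap between the two models, because the density $u^{\kappa\beta}$ is increasing in $u$ for $\kappa = 1$ but decreasing for $\kappa = -1$. Consequently the Poisson argument handles small $u$ in the $\beta$-case but large $u$ in the $\beta'$-case, with the Mecke/added-vertex argument taking over the opposite tail. The inversion $v = 1/u$ in the $\beta'$-case is the technical step that restores the incomplete gamma form with a large argument, making \cref{incomplete-gamma} applicable uniformly; apart from this, the choice of scaling $H_\ast \sim t^{1/(\kappa\beta + 1)}$ with its signed exponent is the natural one that balances the two tail contributions and produces the target rate $(\kappa\beta + d/2 + 1)/(\kappa\beta + 1)$ in the final bound.
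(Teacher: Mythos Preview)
Your proposal is correct and follows essentially the same route as the paper: split at a height threshold $H_\ast \sim t^{1/(\kappa\beta+1)}$, control the finite-mass side by a Poisson tail bound, and control the other side via Mecke plus \cref{added-vertex} and the incomplete-gamma estimate. The only cosmetic difference is that the paper packages your second-term computation into \cref{typical-height} (the typical-height tail) and cites it, whereas you redo that calculation inline.
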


\begin{proof}
	Let us introduce an event
	$$E_\height^B \coloneqq \{ h\le \height \text{ for any } (v,h) \in \eta_\beta^\maybeprime \text{ with } v\in \cF_0(\cD_\beta^\maybeprime) \cap B \}.$$
	Observe that
	\begin{align}
		\notag
		\P{|\cF_0(\cD_\beta^\maybeprime) \cap B| \ge tV_d(B)}
		&=
		\P{|\cF_0(\cD_\beta^\maybeprime) \cap B| \ge tV_d(B) \;\&\; E_\height^B} + \P{|\cF_0(\cD_\beta^\maybeprime) \cap B| \ge tV_d(B) \;\&\; (E_\height^B)^c}
		\\&\le
		\P{\eta_\beta^\maybeprime(B\times(-\infty,\height])\ge tV_d(B)} + \P{(E_\height^B)^c}.
		\label{e:UB-two-terms}
	\end{align}
	For the first probability, recall that $\eta_\beta^\maybeprime(B\times(-\infty,\height]) \sim \Poisson\left(
		\frac{\gamma c^\maybeprime_{d+1,\beta}}{|\kappa \beta + 1|} (\kappa \height)^{\kappa \beta + 1} V_d(B)
		\right)$.
	For Poisson variables $X\sim \Poisson(\lambda)$ with mean $\lambda > 0$, there are well-known bounds on their tails, see for instance \cite[Lemma 3.4]{bonnet_concentration_2024} which states that
	$$\P{X\ge n} \le \left(e\frac{\lambda}{n}\right)^n,$$
	for any $n>\lambda$.
	Applied with
	$n=tV_d(B)$ and
	$\lambda=\frac{\gamma c^\maybeprime_{d+1,\beta}}{|\kappa \beta + 1|} (\kappa \height)^{\kappa \beta + 1} V_d(B)$
	with
	${\height= \kappa \left(\frac{|\kappa \beta + 1| }{e^2 \gamma c^\maybeprime_{d+1,\beta}} t\right)^\frac{1}{\kappa \beta + 1}}$ (so that $e\frac{\lambda}{n}=e^{-1}$), we get
	\begin{align}
		\label{e:UB-first-proba}
		\P{\eta_\beta^\maybeprime(B\times(-\infty,\height])\ge tV_d(B)}
		& \leq e^{-tV_d(B)} .
	\end{align}
	Notice that this way $\height$ increases with $t$, because the exponent $\frac{1}{\kappa \beta + 1}$ has the same sign as $\kappa$ (in front of the bracket) in both the $\beta$ and $\beta'$ cases.
	More precisely $\height\uparrow\infty$ in the $\beta$-Delaunay model and $\height\uparrow 0$ in the $\beta'$-Delaunay model, as $t\uparrow\infty$.
	
	For the second probability, observe that	
	\begin{align}
		\notag
		\P{(E_\height^B)^c}
		\le\E{\left|\left\{(v,h)\in \eta_\beta^\maybeprime \mid v\in \cF_0(\cD_\beta^\maybeprime) \cap B ,\  h\ge \height \right\}\right|}
		&= \Lambda_\beta^\maybeprime V_d(B) \P{\smash{h_\typcellany} \ge \height}
		\\ & \label{e:UB-EHBc}
		\leq \Lambda_\beta^\maybeprime V_d(B) e^{- c' (\kappa \height)^{\kappa \beta + \frac{d}{2} + 1}} ,
	\end{align}
	where the last inequality (due to \cref{typical-height}) holds for some $c'>0$ and as long as $H>H_0$, where $H_0$ is a constant independent of $t$ and $B$.
	Hence the result follow by inserting \eqref{e:UB-first-proba} and \eqref{e:UB-EHBc} in \eqref{e:UB-two-terms}.
\end{proof}

\subsection{Arcs and measures on the space of half-spaces}

Let $x=(v,h)\in \R^d\times\R$. Consider the cell
\begin{align}
	\label{e:cell-as-intersection}
	\cell_{\eta_{\beta;x}^\maybeprime}(x) = \left\{w\in \R^d \mid \norm{w-v}^2 + h \le \norm{w-v'}^2 + h' \text{ for all } x'=(v',h')\in \eta_\beta^\maybeprime \right\}
	= \bigcap_{x'\in \eta_\beta^\maybeprime} \bop(x,x'),	
\end{align}
where $\bop(x,x')$ denote bounding half-spaces defined by
\begin{align}
	\label{e:def-bop}
	\bop((v,h),(v',h')) \coloneqq \halfspace(2 v' - 2 v, h' + \norm{v'}^2 - h - \norm{v}^2) \in \overline{\halfspaces}.
\end{align}
The function $\bop(x,\cdot)$ maps $\R^d\times\R$ to $\overline{\halfspaces}$. For $x = (v,h)$ and $\halfspace = \halfspace(u,t)\in\halfspaces$ where $u\in\Sph^{d-1}$ and $t\in\R$, define
$$\arc(x,\halfspace)\coloneqq \{(v + \alpha u, h + (t - \langle v, u \rangle)^2 - (t - \langle v, u \rangle - \alpha)^2) \mid \alpha > 0\}.$$
It is an arc of a parabola (see \cref{fig:pi} for more geometric details of this construction). Additionally set
$$\arc(x,\emptyset) \coloneqq \{(v,h') \mid h' < h\}
\qquad \text{and} \qquad
\arc(x,\R^d) \coloneqq \{(v,h') \mid h' \ge h\}.$$
\begin{figure}[ht]
	\centering
	\begin{subfigure}{0.85\textwidth}
		\includegraphics[width=\textwidth]{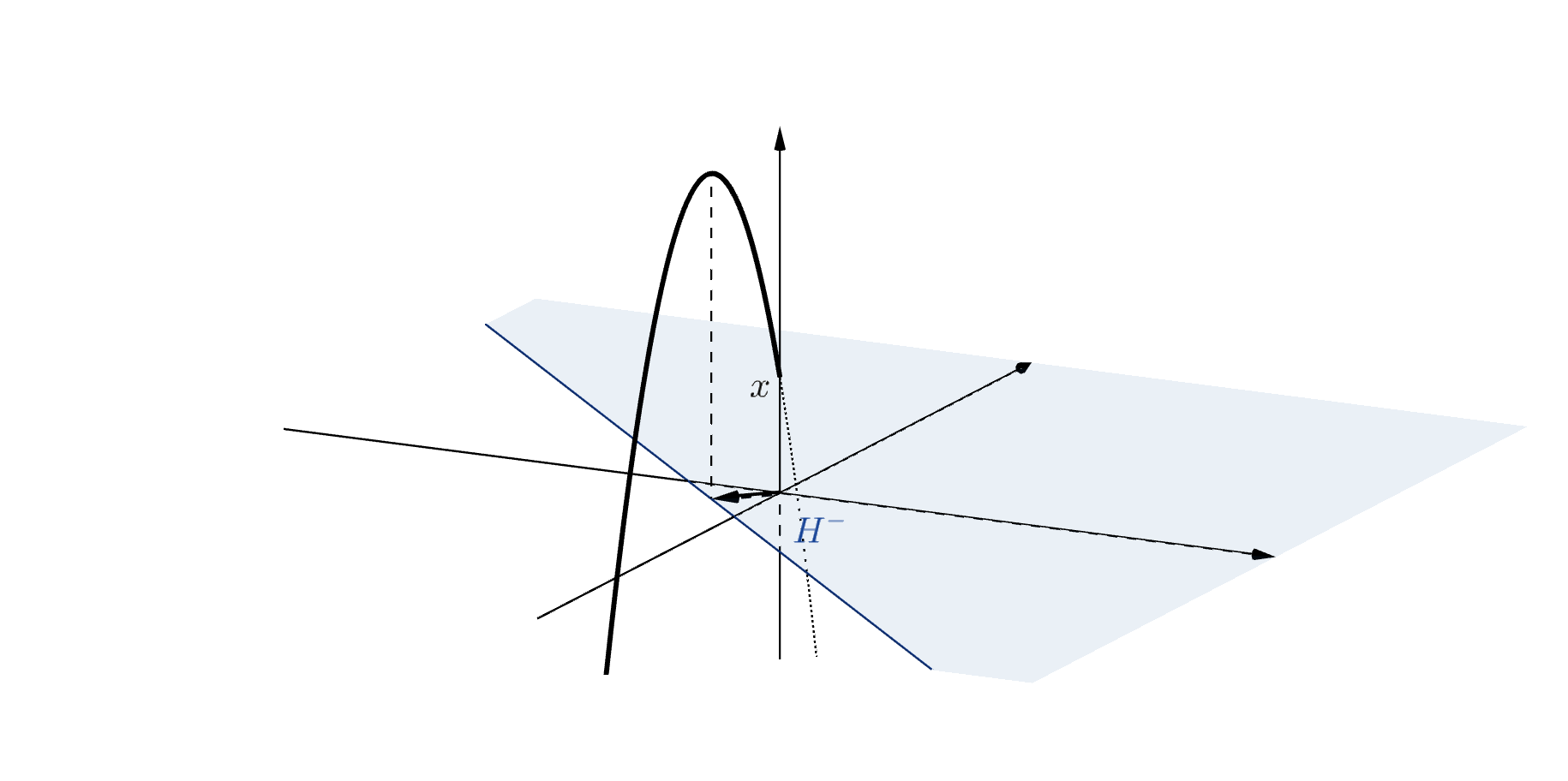}
		\caption{case where the apex of the parabola lies on $\arc(x,H^-)$}
	\end{subfigure}
	
	\begin{subfigure}{0.85\textwidth}
		\includegraphics[width=\textwidth]{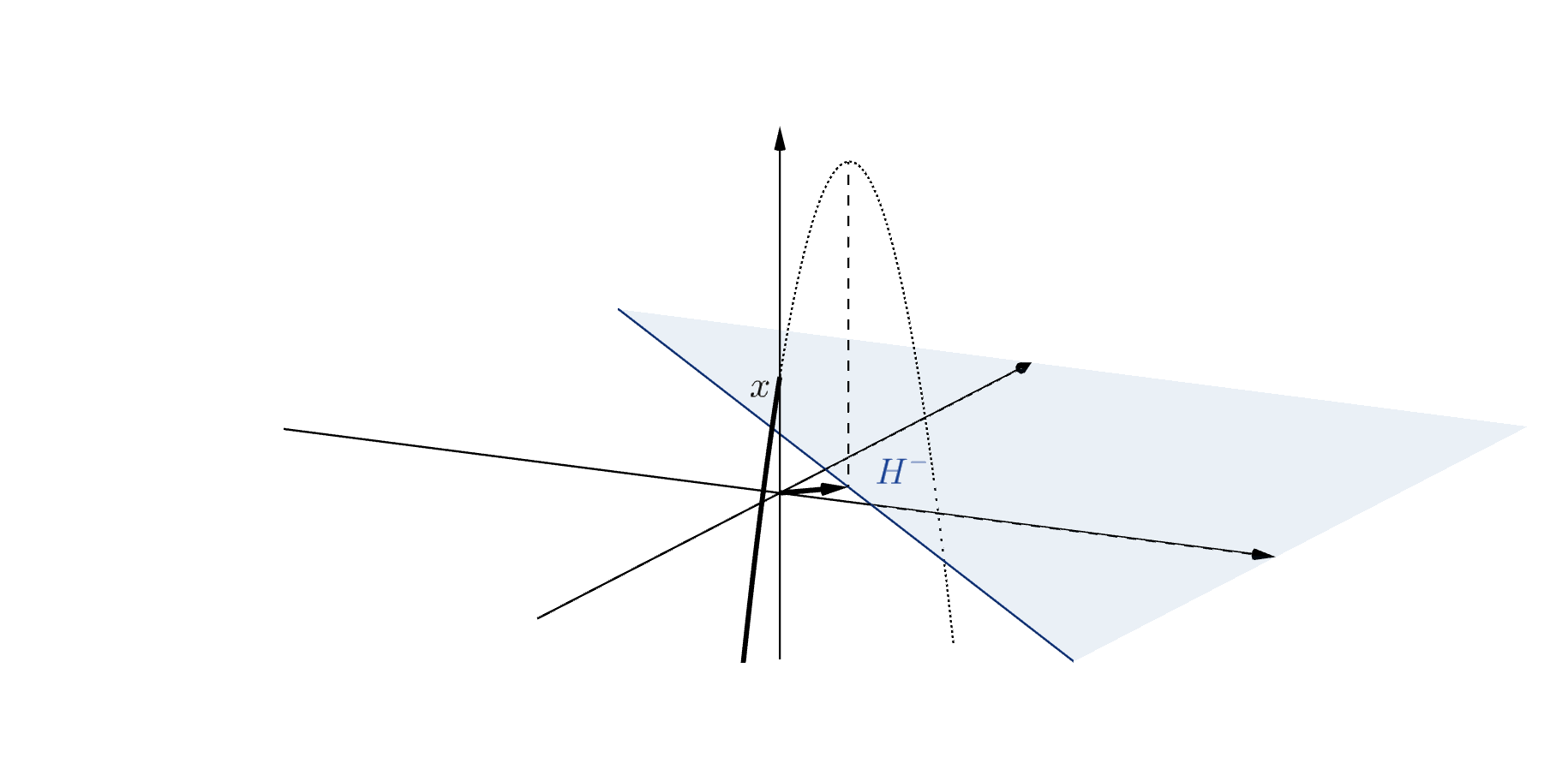}
		\caption{case where the apex of the parabola does not lie on $\arc(x,H^-)$}
	\end{subfigure}
	
	\caption{The curve $\arc(x,\halfspace)$ for some $x=(\origin,h)$, $h\in\R$, and $\halfspace=\halfspace(u,t)\in\halfspaces$. The bold vector is $tu$, with $t$ positive in the first figure and negative in the second. Geometrically the curve can be constructed as follows: (1) take the downwards parabola with apex above $tu$ containing $x$; (2) divide it into two arcs at $x$; (3) take the arc that has an intersection of finite (possibly zero) length with $H^-\times\R$.}
	\label{fig:pi}
\end{figure}%
The two functionals defined above are related as follows.
\begin{lemma}[Relation between $\bop$ and $\arc$, and equivariance of $\arc$]
	\label{pi-inverse-b}
	For any $x\in \R^d\times\R$ and $\halfspace \in\overline{\halfspaces}$, 
	\begin{align*}
		\arc(x,\halfspace) 
		& = \{x' \in \R^d \times \R \mid \bop(x,x') = \halfspace\} ,
	\end{align*}
	and for any $x\in \R^d\times\R$, $\halfspace \in\overline{\halfspaces}$ and $v\in\R^d$, 
	\begin{align*}
		\arc( v + \scale{c} x , v + \scale{c} \halfspace)
		& = v + \scale{c} (\arc(x,\halfspace)).
	\end{align*} 
\end{lemma}
\begin{proof}
	This follows by a straightforward calculation.
\end{proof}

By the last lemma, mapping $\eta_\beta^\maybeprime$ by $\bop(x,\cdot)$ gives rise to the following Poisson point process on $\overline{\halfspaces}$ (a.s.\ in $\halfspaces$):
\begin{equation}
	\label{e:tilde-mu}
	\widetilde{\eta}_{\beta;x}^\maybeprime \coloneqq \{\bop(x,x') \mid x'\in \eta_\beta^\maybeprime\}
	\quad \text{ of intensity measure} \quad
	\widetilde{\mu}_{\beta;x}^\maybeprime(\cdot) \coloneqq \mu_\beta^\maybeprime \left( \bigcup_{\halfspace \in \,\cdot} \arc(x,\halfspace ) \right).
\end{equation}
With this notation, we can reformulate \cref{typical-distribution} as follows.

\begin{lemma}
	\label{typical-distribution-reformulation}
	Let $D\subset \widehat{\K}_\origin$ be a Borel set. Then,
	$$
	\P{\smash{\extypcellany}\in D}
	= \frac{\gamma c^\maybeprime_{d+1,\beta}}{\Lambda_\beta^\maybeprime}
	\int_{\kappa \R_+} 
	\P{
		\left((\origin,h),\bigcap_{\halfspace \in \widetilde{\eta}_{\beta;(\origin,h)}^\maybeprime} \halfspace \right) \in D
	}
	(\kappa h)^{\kappa \beta} \dint h.
	$$
\end{lemma}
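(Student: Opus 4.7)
The plan is to reduce the reformulation to an almost-sure identity at the level of the integrand, and then invoke \cref{typical-distribution} directly. Concretely, I would fix $h \in \kappa \R_+$ and show that
\[
\cell_{\eta_{\beta;(\origin,h)}^\maybeprime}(\origin,h) \;=\; \bigcap_{\halfspace \in \widetilde{\eta}_{\beta;(\origin,h)}^\maybeprime} \halfspace
\]
as subsets of $\R^d$; once this identity is established, the two integrands in \cref{typical-distribution} and in the claimed reformulation coincide, and the statement follows.

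The key step is a direct computation from the definition \eqref{e:Laguerre} of the Laguerre cell. Starting from
\[
\cell_{\eta_{\beta;(\origin,h)}^\maybeprime}(\origin,h) \;=\; \bigl\{w\in\R^d : \norm{w}^2 + h \le \norm{w-v'}^2 + h' \text{ for all } (v',h')\in \eta_{\beta;(\origin,h)}^\maybeprime\bigr\},
\]
I would expand $\norm{w-v'}^2 = \norm{w}^2 - 2\langle w, v'\rangle + \norm{v'}^2$ and rearrange to obtain the equivalent condition $\langle w, 2v'\rangle \le h' + \norm{v'}^2 - h$, which is precisely the defining inequality of the half-space $\bop((\origin,h),(v',h'))$. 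Hence
\[
\cell_{\eta_{\beta;(\origin,h)}^\maybeprime}(\origin,h) \;=\; \bigcap_{x'\in \eta_{\beta;(\origin,h)}^\maybeprime} \bop((\origin,h),x').
\]
The contribution of $x' = (\origin,h)$ itself is $\bop((\origin,h),(\origin,h)) = \halfspace(0,0) = \R^d$, which is trivial and can be discarded. The remaining intersection runs over $x'\in\eta_\beta^\maybeprime$, and by definition \eqref{e:tilde-mu} its image under $\bop((\origin,h),\cdot)$ is precisely the point process $\widetilde{\eta}_{\beta;(\origin,h)}^\maybeprime$, proving the displayed identity.

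Substituting this into \cref{typical-distribution} yields the formula. The only subtlety worth mentioning is the handling of the degenerate outputs of $\bop$: on the event of general position (which holds almost surely thanks to \eqref{e:general-position}), all pairs $(\origin,h),x'$ with $x'\in\eta_\beta^\maybeprime$ have distinct spatial coordinates, so $\bop((\origin,h),x')$ lands in the genuine half-space stratum $\halfspaces$; the degenerate values $\emptyset$ and $\R^d$ therefore play no role in the intersection. No other technical obstacles arise, as this is essentially a reindexing of the same cell through the map $\bop((\origin,h),\cdot)$, and no new estimates are needed.
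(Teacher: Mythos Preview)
Your proposal is correct and follows exactly the approach the paper takes: the paper establishes the identity $\cell_{\eta_{\beta;x}^\maybeprime}(x) = \bigcap_{x'\in \eta_\beta^\maybeprime} \bop(x,x')$ in the text immediately preceding the lemma, defines $\widetilde{\eta}_{\beta;x}^\maybeprime$ as the image of $\eta_\beta^\maybeprime$ under $\bop(x,\cdot)$, and then states the lemma without proof as an immediate reformulation of \cref{typical-distribution}. Your write-up simply makes explicit the computation and the (harmless) handling of the trivial half-space coming from the added point itself.
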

\begin{proof}
	Combining \eqref{e:cell-as-intersection} and \eqref{e:tilde-mu} gives that $\cell_{\eta_{\beta;x}^\maybeprime}(x) = \bigcap_{\halfspace \in \widetilde{\eta}_{\beta;x}^\maybeprime} \halfspace$.
	Then the statement follows from \cref{typical-distribution}.
\end{proof}

The following two lemmas give properties of the measure $\widetilde{\mu}_{\beta;x}^\maybeprime$, which we will use in the sequel.
\begin{lemma}[Homogeneity and translation invariance of $\widetilde{\mu}_{\beta;x}^\maybeprime$]
	\label{scaling-tilde}
	For any $x\in\R^d\times\R$, $c>0$, $v\in\R^d$ and measurable set $A\subset \halfspaces$,
	\begin{align}
		\label{e:scaling-tilde}
		\widetilde{\mu}_{\beta;v+\scale{c}x}^\maybeprime(v+cA) = c^{\kappa 2 \beta + d + 2}\widetilde{\mu}_{\beta;x}^\maybeprime(A) .
	\end{align}
\end{lemma}
\begin{proof}
	This is a direct consequence of the definition \eqref{e:tilde-mu} of $\widetilde{\mu}_{\beta;x}^\maybeprime$, the homogeneity and translation invariance of $\mu_\beta^\maybeprime$ and the equivariance of $\arc$, see \cref{improp-homo} of \cref{improp} and \cref{pi-inverse-b}, respectively.
\end{proof}

\begin{lemma}[Density of $\widetilde{\mu}_{\beta;(\origin,h)}^\maybeprime$]
	\label{density-on-half-spaces}
	Let $\kappa h > 0$.
	Let $U\subset \Sph^{d-1}$ and $T\subset \R$ be Borel sets. Let 
	\begin{equation}
		\label{e:hyperplaneUT}
		\halfspacesset 
		= \halfspacesset(U,T) 
		\coloneqq \{\halfspace(u,t) \mid u\in U,\ t\in T\}.
	\end{equation}
	Then 
	\begin{equation}
		\label{e:densitymutilde}
		\widetilde{\mu}_{\beta;(\origin,h)}^\maybeprime(\halfspacesset)
		= \sigma_{d-1}(U) \int_T m_{\beta;(\origin,h)}^\maybeprime(t) \dint t,
	\end{equation}	
	for some density function $m_{\beta;(\origin,h)}^\maybeprime$.
	Moreover, this function satisfies the following:
	\begin{enumerate}
		\item \label{density-on-half-spaces_item1} $m_{\beta;(\origin,c^2 h)}^\maybeprime(t) = c^{\kappa 2 \beta + d + 1} m_{\beta;(\origin,h)}^\maybeprime(\frac{t}{c})$
		\item \label{i:densityEstimates}
			\begin{enumerate}
				\item 
					\label{i:density-1} 
					$m_{\beta;(\origin,h)}(-t) 
					\leq m_{\beta;(\origin,h)}(t) 
					= \Theta\left( (h+t^2)^{\beta + \frac{d}{2} + \frac{1}{2}} \right)$ for $t\ge 0$, 
				\item 
					\label{i:density-2}
					$m_{\beta;(\origin,h)}^\prime (-t) 
					\leq m_{\beta;(\origin,h)}^\prime (t) 
					= \Theta\left((-h)^{\frac{d}{2}} (-h-t^2)^{-\beta + \frac12} \right)$ for $0\le t < \sqrt{-h}$, 
			\end{enumerate}
			with constants in $\Theta$'s independent of $h$.
	\end{enumerate}
\end{lemma}

\begin{proof}
	\Cref{density-on-half-spaces_item1} follows from the homogeneity \eqref{e:scaling-tilde} of $\widetilde{\mu}_{\beta;x}^\maybeprime$.
	For the rest of the proof, we assume without loss of generality that $h = \kappa$.
	Conveniently, the letter $h$ is now free for us to use as variable in integrals below.
	
	From \cref{pi-inverse-b} and the definition \eqref{e:tilde-mu} of the measures $\widetilde{\mu}_{\beta;x}^\maybeprime$, we have
	$$
	\widetilde{\mu}_{\beta;(\origin,\kappa)}^\maybeprime(\halfspacesset)
	= \int_{\R^d \times \R} \1{\bop((\origin,\kappa),x)\in \halfspacesset} \mu_\beta^\maybeprime(\dint x)
	= \int_{\R^d \times \kappa\R_+} \1{\frac{v}{\norm{v}} \in U} \1{\frac{h + \norm{v}^2 - \kappa}{2\norm{v}} \in T} \mu_\beta^\maybeprime(\dint v, \dint h) ,
	$$	
	with the second equality due to the definition \eqref{e:def-bop} of $\bop(\cdot)$.
	By definition of $\mu_\beta^\maybeprime$, see \eqref{def:mu_beta} and \eqref{def:mu_beta_prime}, and changing to polar coordinates $v = ru$, $r>0$, $u\in \Sph^{d-1}$, we get	
	\begin{align*}
		\widetilde{\mu}_{\beta;(\origin,\kappa)}^\maybeprime(\halfspacesset)
		&= \gamma c^\maybeprime_{d+1,\beta} \sigma_{d-1}(U)
		\int_{\kappa \R_+} \int_0^\infty
		\1{\frac{h + r^2 - \kappa}{2r} \in T} r^{d-1}
		(\kappa h)^{\kappa \beta}
		\dint r \dint h
		\\ &=
		2 \gamma c^\maybeprime_{d+1,\beta} \sigma_{d-1}(U)
		\int_T \int_0^\infty  \1{2tr - r^2 + \kappa \in \kappa \R_+}
		(\kappa(2tr - r^2 + \kappa))^{\kappa \beta} r^d
		\dint r \dint t,
	\end{align*}
	where the second equality is due to the change of variable $h = 2tr - r^2 + \kappa$, $\frac{\partial h}{\partial t} = 2r$.
	Therefore we have shown that \eqref{e:densitymutilde} holds with
	\begin{align*}
		m_{\beta;(\origin,\kappa)}^\maybeprime(t) 
		&= c \int_0^\infty  \1{1+\kappa t^2 -\kappa (r-t)^2  > 0}
		(1+\kappa t^2 -\kappa (r-t)^2 )^{\kappa \beta} r^d
		\dint r ,
	\end{align*}
	where $c= 2 \gamma c^\maybeprime_{d+1,\beta}$, and it only remains to establish \Cref{i:densityEstimates}.
	This expression can also be rearranged as follows, by the change of variable $q = \frac{r-t}{\sqrt{1+\kappa t^2}}$:
	\begin{align}
		\notag
		m_{\beta;(\origin,\kappa)}^\maybeprime(t) 
		&= c (1+\kappa t^2)^{\kappa \beta} \int_0^\infty  \1{\kappa \left( \frac{r-t}{\sqrt{1+\kappa t^2}} \right)^2 <1}
		\left( 1 - \kappa \left( \frac{r-t}{\sqrt{1+\kappa t^2}} \right)^2 \right)^{\kappa \beta} r^d
		\dint r 
		\\ 
		\label{e:m-prime-beta-eq}
		& = c (1+\kappa t^2)^{\kappa \beta + \frac{d}{2} + \frac{1}{2}} \int_{\frac{-t}{\sqrt{1+\kappa t^2}}}^\infty \1{\kappa q^2 < 1} \left( 1 - \kappa q^2 \right)^{\kappa \beta} \left(q + \frac{t}{\sqrt{1+\kappa t^2}}\right)^d \dint q .  
	\end{align}
	From this expression we see that $m_{\beta;(\origin,\kappa)}^\maybeprime(- t) \leq m_{\beta;(\origin,\kappa)}^\maybeprime(t)$ for any $t\ge 0$, which gives the first inequalities in \Cref{i:density-1} and \Cref{i:density-2}.
	It remains only to establish the approximations in \Cref{i:density-1} and \Cref{i:density-2}. We will consider the two cases $\kappa=1$ and $\kappa=-1$ separately.

	Let $\kappa=1$ and $t\ge 0$.
	From \eqref{e:m-prime-beta-eq}, we have
	\begin{align*}
		\int_0^1 (1-q^2)^\beta q^d \dint q
		\leq \frac{m_{\beta;(\origin,1)}^\maybeprime(t)}{c (1+t^2)^{\beta + \frac{d}{2} + \frac{1}{2}}}
		\leq \int_{-1}^1 (1-q^2)^\beta (q+1)^d \dint q .
	\end{align*}
	The left and right hand sides are positive and finite constants independent of $t$, which gives the desired approximation in \Cref{i:density-1}.
	
	Now let $\kappa = -1$ and $0 \le t < 1$.
	From \eqref{e:m-prime-beta-eq}, we have
	\begin{align*}
		m_{\beta;(\origin,-1)}^\prime(t) 
		&=  c (1-t^2)^{-\beta + \frac{d}{2} + \frac{1}{2}} \int_{\frac{-t}{\sqrt{1-t^2}}}^\infty (1 - q^2)^{-\beta} \left(q + \frac{t}{\sqrt{1-t^2}}\right)^d \dint q 
		\\
		&=  c (1-t^2)^{-\beta + \frac{1}{2}} \int_{\frac{-t}{\sqrt{1-t^2}}}^\infty (1 - q^2)^{-\beta} \left(\frac{\sqrt{1-t^2}}{t} q + 1 \right)^d \dint q .
	\end{align*}
	By the dominated convergence theorem, the last integral is a positive and and continuous function of $t$ on $[0,1)$ which converges to $\int_0^\infty (1 - q^2)^{-\beta} \dint q \in (0,\infty)$ as $t\uparrow 1$.
	The result follows.
\end{proof}

\subsection[Flower and Φ-content]{Flower and $\Phi$-content}
\label{s:flower}

Recall that in \eqref{e:flower} we defined the Voronoi flower $\flower_X(x)$ of \textit{a point $x \in \R^d \times \R$ in a set $X \subset \R^d\times \R$}. Below we will show that it depends only on $x$ and its Laguerre cell $\cell_X(x) \subset \R^d$. For that, we describe a more general notion of Voronoi flower, defined for any \textit{pair of a point in $\R^d\times\R$ and a convex body in $\R^d$}.
The direct relation between these two notions is given by \cref{lem:flowers-relation} below.

Let $\widehat{\convexbody}=(x,\convexbody)\in\widehat{\K}$. Define its \textbf{flower} as
\begin{equation}
	\label{e:flower2}
	\flower(\widehat{\convexbody}) = \flower(x,\convexbody)\coloneqq \bigcup_{\halfspace  \in\overline{\halfspaces} \,:\, \convexbody \not\subset \Int\halfspace} \arc(x,\halfspace ).
\end{equation}
To see that this notion indeed generalizes Voronoi flower, the key observation is the (perhaps geometrically intuitive) lemma below.

\begin{lemma}
	\label{par-in-par}
	Let $x\in\R^d\times\R$, $\halfspace \in\overline{\halfspaces}, w\in\R^d$. Then
	
	\begin{enumerate}
		\item \label{i:par-in-par1} If $w \notin \Int\halfspace $, then $\arc(x,\halfspace ) \subset \Pi^\downarrow_w(x)$.
		\item \label{i:par-in-par2} If $w \in \Int\halfspace $, then $\arc(x,\halfspace ) \cap \Pi^\downarrow_w(x) = \emptyset$.
	\end{enumerate}
\end{lemma}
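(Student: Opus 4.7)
The plan is to reduce both claims to a single linear inequality between $t$ and $\langle u,w\rangle$ by parameterizing the arc explicitly and substituting into the defining inequality of the paraboloid. First I would dispose of the degenerate cases $H^-\in\{\emptyset,\R^d\}$: for $H^-=\emptyset$, every $w$ satisfies $w\notin H^-$ and $\arc(x,\emptyset)=\{v\}\times(-\infty,h]$ is trivially contained in $\Pi^\downarrow_w(x)$, whose boundary passes through $(v,h)$; for $H^-=\R^d$, every $w$ lies in $H^-$ and $\arc(x,\R^d)=\{v\}\times(h,\infty)$ is disjoint from $\Pi^\downarrow_w(x)$, since $\Pi^\downarrow_w(x)\cap(\{v\}\times\R)=\{v\}\times(-\infty,h]$.

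For the main case $H^-=H^-(u,t)$ with $u\in\Sph^{d-1}$, $t\in\R$, write $x=(v,h)$ and $s=\langle v,u\rangle$. A generic point of $\arc(x,H^-)$ is $y_\alpha=(v+\alpha u,\,h+2\alpha(t-s)-\alpha^2)$ with $\alpha>0$. Its containment in $\Pi^\downarrow_w(x)=\Pi^\downarrow_{(w,\,h+\|w-v\|^2)}$ is equivalent to
\[
h+2\alpha(t-s)-\alpha^2 \le h+\|w-v\|^2 - \|v+\alpha u - w\|^2.
\]
Expanding $\|v+\alpha u-w\|^2=\|v-w\|^2+2\alpha\langle u,v-w\rangle+\alpha^2$ cancels $h$, $\|v-w\|^2$ and $\alpha^2$, leaving $2\alpha(t-s)\le 2\alpha(\langle u,w\rangle-s)$. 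Dividing by $2\alpha>0$, this reduces to the clean condition $t\le\langle u,w\rangle$.

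Now I would read off both conclusions: (\ref{i:par-in-par1}) $w\notin H^-$ means $\langle u,w\rangle>t$, so the inequality holds strictly for every $\alpha>0$ and $\arc(x,H^-)\subset\Pi^\downarrow_w(x)$; (\ref{i:par-in-par2}) $w\in H^-$ with $\langle u,w\rangle<t$ reverses the inequality strictly for every $\alpha>0$, giving $\arc(x,H^-)\cap\Pi^\downarrow_w(x)=\emptyset$. There are no real obstacles here, as the authors indicate — the whole content is the parametric cancellation above. The one subtlety worth flagging is the boundary configuration $\langle u,w\rangle=t$, where the arc lies exactly on $\partial\Pi^\downarrow_w(x)$; this is a codimension-one condition on $w$ that does not affect later applications (in particular, in the a.s.\ general-position setting of the Poisson models where it occurs with probability zero), and can safely be absorbed into case (\ref{i:par-in-par1}).
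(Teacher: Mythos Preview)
Your proof is correct and follows essentially the same route as the paper's: both parameterize the arc, substitute into the paraboloid inequality, and cancel down to the linear condition $t\le\langle u,w\rangle$ (the paper does this after first shifting the apex to the origin, but the computation is identical). Your explicit handling of the degenerate cases $\halfspace\in\{\emptyset,\R^d\}$ and your flagging of the boundary case $\langle u,w\rangle=t$ are both appropriate; the paper's own computation likewise yields ``$h''\le -\norm{v''}^2$ precisely when $\langle w,u\rangle\ge t$'', so the boundary subtlety you note is genuinely present in the statement and harmless for the applications.
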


\begin{proof}
	When $\halfspace $ is $\R^d$ or $\emptyset$, the claim is clearly true.
	Thus let $x = (v,h)$, $\halfspace  = \halfspace(u,t)$ with $u\in\Sph^{d-1}$, $t\in\R$.
	Any element of $\arc(x,\halfspace )$ can be expressed as
	$$(v',h') = (v + \alpha u, h + (t - \langle v, u \rangle)^2 - (t - \langle v, u \rangle - \alpha)^2)$$
	for some $\alpha>0$.
	Recall that $\Pi^\downarrow_w(x) = \Pi^\downarrow_{(w,h+\norm{w-v}^2)}$. By shifting the picture, it is enough to show that
	$(v'',h'') \in \Pi^\downarrow$ if and only if $w \notin \halfspace $, where $(v'',h'') \coloneqq (v',h') - (w,h+\norm{w-v}^2)$.
	This is in fact almost tautological. Observe that	
	$$\norm{v''}^2 = \norm{v-w}^2 + \alpha^2 + 2\alpha \langle u, v - w \rangle.$$
	At the same time	
	$$h'' = - \norm{v-w}^2 - \alpha^2 + 2\alpha(t - \langle v, u \rangle).$$
	Since $\alpha$ is positive, that means that indeed $h'' \le - \norm{v''}^2$ precisely when $\langle w, u \rangle \ge t$.
\end{proof}

\begin{corollary}
	\label{flower-paraboloids}
	For $\widehat{\convexbody} = (x,\convexbody)\in \widehat{\K}$, 
	$$\flower(\widehat{\convexbody}) = \bigcup_{w\in \convexbody} \Pi^\downarrow_w(x) = \bigcup_{w\in \Ext(\convexbody)} \Pi^\downarrow_w(x),$$
	where $\Ext(\convexbody)$ denotes the extreme points of the convex body $\convexbody$.
\end{corollary}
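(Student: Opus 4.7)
The statement consists of two equalities. For the first, $\flower(\widehat{\convexbody}) = \bigcup_{w \in \convexbody} \Pi^\downarrow_w(x)$, my plan is to use \cref{par-in-par} as a direct bridge between the arcs $\arc(x, \halfspace)$ appearing in the definition of $\flower$ and the paraboloids $\Pi^\downarrow_w(x)$. The forward inclusion is immediate: any $y \in \arc(x, \halfspace)$ with $\convexbody \not\subset \halfspace$ admits a witness $w \in \convexbody \setminus \halfspace$, and \cref{i:par-in-par1} then gives $y \in \arc(x, \halfspace) \subset \Pi^\downarrow_w(x)$. For the reverse inclusion I would take $y \in \Pi^\downarrow_w(x)$ with $w \in \convexbody$; for $y \neq x$ I set $\halfspace := \bop(x, y)$, so that \cref{pi-inverse-b} yields $y \in \arc(x, \halfspace)$, and the contrapositive of \cref{i:par-in-par2}, applied to the nonempty intersection $\{y\} \subset \arc(x, \halfspace) \cap \Pi^\downarrow_w(x)$, forces $w \notin \halfspace$ and therefore $\convexbody \not\subset \halfspace$. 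The trivial case $y = x$ is absorbed separately via $x \in \arc(x, \emptyset) \subset \flower(\widehat{\convexbody})$, where $\halfspace = \emptyset$ is legitimate since $\convexbody$ is nonempty.

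For the second equality, $\supseteq$ is trivial. For $\subseteq$, the key observation is that, after expanding $\|w-v\|^2 - \|v''-w\|^2$, the condition $y = (v'',h'') \in \Pi^\downarrow_w(x)$ unfolds into the single affine inequality
\[
2\langle v'' - v,\, w \rangle \;\ge\; h'' - h - \|v\|^2 + \|v''\|^2
\]
in the variable $w \in \R^d$. Thus $\{w \in \R^d : y \in \Pi^\downarrow_w(x)\}$ is a closed half-space of $\R^d$. If some $w^\ast \in \convexbody$ satisfies it, Minkowski's theorem (combined with Carath\'eodory in $\R^d$) allows me to write $w^\ast = \sum_i \lambda_i w_i$ as a finite convex combination of extreme points $w_i \in \Ext(\convexbody)$; the affinity of the inequality then forces at least one $w_i$ to satisfy it as well, yielding an extreme point with $y \in \Pi^\downarrow_{w_i}(x)$.

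The argument is a compact assembly of the two preceding lemmas together with a standard finite-dimensional Minkowski/Krein--Milman argument, so I do not anticipate any real obstacle. The only two small bookkeeping items are the degenerate half-spaces $\halfspace \in \{\emptyset, \R^d\}$ and the associated edge case $y = x$ in the first equality, and the explicit algebraic expansion that exposes the affine structure of $w \mapsto [y \in \Pi^\downarrow_w(x)]$ in the second.
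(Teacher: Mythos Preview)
Your proof is correct and uses the same two ingredients as the paper: \cref{par-in-par} to pass between arcs and paraboloids, and a Minkowski-type observation that linear (half-space) conditions on $w$ are witnessed by extreme points of $\convexbody$. The only difference is organizational: the paper collapses both equalities into a single circular chain $\flower(\widehat{\convexbody}) \subset \bigcup_{\Ext(\convexbody)} \subset \bigcup_{\convexbody} \subset \flower(\widehat{\convexbody})$ by invoking the extreme-point fact already in the first inclusion (``$\convexbody \not\subset \halfspace$ implies some $w\in\Ext(\convexbody)$ lies outside $\halfspace$''), whereas you prove the two equalities separately and make the affine dependence on $w$ explicit---a slightly longer but equally valid route.
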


\begin{proof}
	Let $\halfspace \in \overline{\halfspaces}$. If $\convexbody \not\subset \Int\halfspace $, there exists $w\in \Ext(\convexbody)$ such that $w\notin \Int\halfspace $.
	By \cref{i:par-in-par1} of \cref{par-in-par}, $\arc(x,\halfspace ) \subset \Pi^\downarrow_w(x)$, and thus
	$$
		\flower(\widehat{\convexbody}) \subset \bigcup_{w\in \Ext(\convexbody)} \Pi^\downarrow_w(x).
	$$
	On the other hand, let $w\in \convexbody$ and $x'\in \Pi^\downarrow_w(x)$.
	Let $\halfspace = \bop(x,x')$.
	By \cref{pi-inverse-b} we have that $x'\in \arc(x,\halfspace )$.
	Thus, by \cref{i:par-in-par2} of \cref{par-in-par}, $w\notin \Int\halfspace $, so $\convexbody\not\subset \Int\halfspace $. Thus
	$$\bigcup_{w\in \convexbody} \Pi^\downarrow_w(x) \subset \flower(\widehat{\convexbody}).$$
	It remains to observe that clearly
	$\bigcup_{w\in \Ext(\convexbody)} \Pi^\downarrow_w(x) \subset \bigcup_{w\in \convexbody} \Pi^\downarrow_w(x)$.
\end{proof}

This, along with \cref{flower-above-vertices}, brings us to the following conclusion.
\begin{corollary}
	\label{lem:flowers-relation}
	For any locally finite set $X\subset \R^d \times \R$ satisfying the general position assumption \eqref{e:general-position} and any $x\in X$, we have
	\begin{align}
		\label{e:flower-relation} 
		\flower( x, \cell_X(x) ) 
		= \bigcup_{w\in \cF_0(\cell_X(x))} \Pi^\downarrow_w(x) 
		= \flower_X(x) .
	\end{align}
	Moreover, if $X = \eta_\beta^\prime$, then almost surely for any $x=(v,h)\in\eta_\beta^\prime$,
	\begin{align}
		\label{e:flower-below-hyperplane} 
		\flower ( x , \cell_{\eta_\beta^\prime}(x) ) 
		\subset \R^d \times (-\infty,0)
		\quad \text{and} \quad 
		\cell_{\eta_\beta^\prime}(x) 
		\subset \B_v^d(\sqrt{-h}) .
	\end{align}
\end{corollary}
\begin{proof}
	Equation \eqref{e:flower-relation} is a direct consequence of \cref{flower-paraboloids} and \cref{flower-above-vertices}. 

	Now, consider $x = (v,h)\in\eta_\beta^\prime$ with $\cell_{\eta_\beta^\prime}(x) \neq \emptyset$ (otherwise \eqref{e:flower-below-hyperplane} is trivial).
	Let $w\in \cF_0(\cell_{\eta_\beta^\prime}(x))$.
	By \eqref{e:flower-relation} and the definition \eqref{e:flower} of $\flower_X(x)$, we have $\Int (\Pi^\downarrow_w(x)) \cap \eta_\beta^\prime = \emptyset$, from which we get $\Pi^\downarrow_w(x) \subset \R^d \times (-\infty,0)$ and $\norm{w-v}^2 \le -h$. 
	This implies \eqref{e:flower-below-hyperplane}.
\end{proof}

For $\widehat{\convexbody}\in \widehat{\K}$, the measure of its flower in the chosen model will be called \textbf{$\Phi$-content} of $\widehat{\convexbody}$ and denoted by
\begin{equation}
	\label{e:content}
	\Phi_\beta^\maybeprime(\widehat{\convexbody}) \coloneqq \mu_\beta^\maybeprime(\flower(\widehat{\convexbody})).
\end{equation}
\begin{lemma}
	\label{phi-point}
	Let $v\in \R^d$ and $\kappa h > 0$. 
	If $\kappa = -1$ assume additionally that $\norm{v}^2 < -h$.
	Then
	$$
	\Phi_\beta^\maybeprime\left((\origin,h),\{v\}\right) = c (\kappa h + \kappa \norm{v}^2)^{\kappa \beta + \frac{d}{2} + 1}
	$$
	for some positive constant $c$ depending on $d$ and $\beta$.
\end{lemma}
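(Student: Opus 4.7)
The plan is to unwind the definition of $\Phi$-content for the degenerate convex body $K=\{v\}$ and reduce the computation to a measure of a single translated paraboloid, which has already been evaluated in \cref{improp}.

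First, by definition \eqref{e:content}, $\Phi_\beta^\maybeprime((\origin,h),\{v\})=\mu_\beta^\maybeprime(\flower((\origin,h),\{v\}))$. Applying \cref{flower-paraboloids} to the singleton $K=\{v\}$ (whose only extreme point is $v$ itself), the flower collapses to a single paraboloid:
\[
\flower((\origin,h),\{v\}) \;=\; \Pi^\downarrow_v((\origin,h)) \;=\; \Pi^\downarrow_{(v,\,h+\norm{v}^2)},
\]
where the second equality uses the definition of $\Pi^\downarrow_w(x)$ given in \cref{sec:notation}.

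Next, I invoke \cref{improp}\cref{improp-para} with nucleus $(v,h+\norm{v}^2)$. Under the standing hypothesis $\kappa h>0$, we have $\kappa(h+\norm{v}^2)>0$ in the case $\kappa=1$ automatically, and in the case $\kappa=-1$ this is the implicit admissibility condition $\norm{v}^2<-h$ (otherwise the flower would extend beyond the allowed half-line of heights and the formula should be read as $0$ or $+\infty$, as appropriate). Under this condition, \cref{improp}\cref{improp-para} gives
\[
\mu_\beta^\maybeprime\bigl(\Pi^\downarrow_{(v,\,h+\norm{v}^2)}\bigr)
\;=\; \gamma\,\frac{c^\maybeprime_{d+1,\beta}}{c^\maybeprime_{d,\beta}}\,\frac{\bigl(\kappa(h+\norm{v}^2)\bigr)^{\kappa\beta+\frac{d}{2}+1}}{|\kappa\beta+\frac{d}{2}+1|}.
\]
Combining the two displays and collecting the prefactors into a single constant $c = \gamma\,c^\maybeprime_{d+1,\beta}\,/\,(c^\maybeprime_{d,\beta}\,|\kappa\beta+\tfrac{d}{2}+1|)$, which depends only on $d$ and $\beta$ (and the fixed intensity $\gamma$), yields the claimed identity.

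There is no real obstacle here: the only subtle point is checking that the measure formula of \cref{improp}\cref{improp-para} can be applied, i.e.\ that the apex of the shifted paraboloid lies in the correct half-line $\kappa\R_+$. This is immediate for $\kappa=1$ and is the natural implicit hypothesis for $\kappa=-1$; the scaling property in \cref{scprop} together with \cref{improp-homo} guarantees that translating the apex in the $\R^d$-direction does not change the measure, which is why only the height coordinate $h+\norm{v}^2$ enters the final expression.
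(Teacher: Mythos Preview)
Your proof is correct and follows essentially the same approach as the paper: identify the flower of a singleton as a single paraboloid via \cref{flower-paraboloids}, then apply \cref{improp}\cref{improp-para} to evaluate its $\mu_\beta^\maybeprime$-measure. Your additional remarks on the admissibility condition for $\kappa=-1$ and on translation invariance are not needed for the argument but are not incorrect.
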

\begin{proof}
	By \cref{flower-paraboloids},
	$$\flower\left((\origin,h),\{v\}\right) = \Pi^\downarrow_v((\origin,h)) = \Pi^\downarrow_{(v,h+\norm{v}^2)}.$$
	By \cref{improp} \cref{improp-para},
	$$\Phi_\beta^\maybeprime\left((\origin,h),\{v\}\right) = \mu_\beta^\maybeprime\left(\Pi^\downarrow_{v,h+\norm{v}^2}\right) = c (\kappa h + \kappa \norm{v}^2)^{\kappa \beta + \frac{d}{2} + 1}.$$
\end{proof}

\begin{lemma}
	\label{phi-in-polar}
	The $\Phi$-content of $\widehat{\convexbody}\in\widehat{\K}_\origin$ can be expressed as
	$$
		\Phi_\beta^\maybeprime((\origin,h),\convexbody)
		= \int_{\Sph^{d-1}} \int_{-\infty}^{\support(\convexbody,u)} m_{\beta;(\origin,h)}^\maybeprime(t) \dint t \sigma_{d-1}(\dint u) ,
	$$
	where $m_{\beta;(\origin,h)}^\maybeprime$ is defined in \cref{density-on-half-spaces} and $\support(\convexbody,u)$ is the support function of $\convexbody$ in direction $u$.
\end{lemma}
\begin{proof}
	Let $\widehat{\convexbody} = ((\origin,h),\convexbody)\in \widehat{\K}_\origin$.
	Applying equations \eqref{e:content}, \eqref{e:flower2}, and \eqref{e:tilde-mu} in this order gives
	\begin{align*}
		\Phi_\beta^\maybeprime(\widehat{\convexbody})
		&= \mu_\beta^\maybeprime(\flower(\widehat{\convexbody}))
		=\mu_\beta^\maybeprime\left(\bigcup_{\halfspace  \in\overline{\halfspaces} \,:\, \convexbody \not\subset \Int\halfspace} \arc((\origin,h),\halfspace )\right)
		= \widetilde{\mu}_{\beta;\origin}^\maybeprime\left(\{\halfspace \in \halfspaces \mid \convexbody \not\subset \Int\halfspace\}\right) .
	\end{align*}
	Finally, \eqref{e:densitymutilde} of \cref{density-on-half-spaces} gives the desired expression.
\end{proof}

\begin{rem}[Homogeneity of $\Phi_\beta^\maybeprime$]
	\label{scaling-Phi}
	From \cref{improp-homo} of \cref{improp} it directly follows that, for $c>0$ and $\widehat{\convexbody}\in \widehat{\K}$,
	$$
		\Phi_\beta^\maybeprime(\scale{c} \widehat{\convexbody}) = c^{\kappa 2 \beta + d + 2} \Phi_\beta^\maybeprime(\widehat{\convexbody}).
	$$	
\end{rem}

\subsection[Typical cell with n facets]{Typical cell with $n$ facets}

We want to describe the distribution of the $\Phi$-content and shape of the typical cell of $\cV_\beta^\maybeprime$. While difficult endeavor by itself, something can be said in the situation of a fixed number of facets.
For this, we consider the spaces
\begin{align*}
	\widehat{\cP}_{\origin,n} &\coloneqq \{((v,h),P) \mid v = \origin,\ h > 0,\ P\in \cP_n \},
	\\
	\widehat{\cP}_{\origin,n}^\prime &\coloneqq \{((v,h),P) \mid v = \origin,\ h < 0,\ P\in \cP_n,\ P \subset \sqrt{-h}\B^d\}.
\end{align*}
By \eqref{e:flower-below-hyperplane} the typical cell $\smash{\extypcellprime} = ((\origin,\smash{h_\typcellprime}),\smash{\typcellprime})$ of $\cV_\beta^\prime$, satisfies $\smash{\typcellprime} \subset \sqrt{-h_\typcellprime}\B^d$ a.s., and thus the typical cell of $\cV_\beta^\maybeprime$ a.s.\ lies in $\widehat{\cP}_{\origin,n}^\maybeprime$ for some $n\in\N$.
We will now equip the spaces $\cP_n$ with measures that will allow us to describe the distribution of the typical cell with $n$ facets.
We identify polytopes $P = \cap_{i=1}^n \halfspace_i$ with $n$ facets with the sets $\{\halfspace_1,\ldots, \halfspace_n\} $ of half-spaces defining them.
That means that we see the space $\cP_n$ as a subset of $(\halfspaces)^n / \mathfrak{S}_n$, the quotient of $(\halfspaces)^n$ by the symmetric group.
For any fixed point $x = (v,h)\in\R^d\times\R$, we define the measure
\begin{equation}
	\label{eq:polytope_measure}
	\widetilde{\mu}_{\beta;x;n}^\maybeprime \coloneqq \frac{1}{n!} (\widetilde{\mu}_{\beta;x}^\maybeprime)^{\otimes n},
\end{equation}
on $\cP_n$, where we recall that $\widetilde{\mu}_{\beta;x}^\maybeprime$ is the measure on $\halfspaces$ defined by \eqref{e:tilde-mu}.
More explicitly, for a Borel set $A\subset \cP_n$, we have
\begin{align*}
	\widetilde{\mu}_{\beta;x;n}^\maybeprime(A) 
	&= \frac{1}{n!}
	\int_{(\halfspaces)^n} \1{ \cap_{i=1}^n \halfspace_i \in A} (\widetilde{\mu}_{\beta;x}^\maybeprime)^{\otimes n} (\dint H^-_{1:n}).
\end{align*}

\begin{rem}[Homogeneity of $\widetilde{\mu}_{\beta;(\origin,h);n}^\maybeprime$]
	\label{scaling-tilde-n}
	From the homogeneity of $\widetilde{\mu}_{\beta;x}^\maybeprime$ (see \cref{scaling-tilde}), it follows that for measurable $A\subset \widehat{\cP}_{\origin,n}^\maybeprime$ and $c>0$,
	$$
	\widetilde{\mu}_{\beta;(\origin,c^2 h);n}^\maybeprime(cA) = c^{n(\kappa 2 \beta + d + 2)}\widetilde{\mu}_{\beta;(\origin,h);n}^\maybeprime(A).
	$$
\end{rem}

Equip $\widehat{\cP}_{\origin,n}^\maybeprime$ with measures
\begin{align}
	\label{e:def-nu-beta-o-n}
	\nu_{\beta;\origin;n}^\maybeprime (\cdot)
	\coloneq \frac{\gamma c^\maybeprime_{d+1,\beta}}{\Lambda_\beta^\maybeprime}
	\int_{\kappa \R_+}
	\int_{\cP_n} \1{((\origin,h),P) \in \cdot}
	\widetilde{\mu}_{\beta;(\origin,h);n}^\maybeprime (\dint P)
	(\kappa h)^{\kappa \beta} \dint h.
\end{align}
\begin{rem}[Homogeneity of $\nu_{\beta;\origin;n}^\maybeprime$]
	\label{scaling-nu}
	From the homogeneity of $\widetilde{\mu}_{\beta;(\origin,h);n}^\maybeprime$ (see \cref{scaling-tilde-n}), it follows that for measurable $A\subset \widehat{\cP}_{\origin,n}^{(\prime)}$ and $c>0$,
	$$
	\nu_{\beta;\origin;n}^\maybeprime(\scale{c} A) = c^{n(\kappa 2 \beta + d + 2) + \kappa 2 \beta + 2} \nu_{\beta;\origin;n}^\maybeprime(A) .
	$$
\end{rem}

Now we can write

\begin{lemma}
	\label{typ-n-fac}
	Fix $\beta$ and $\kappa$. Let $D\subset \widehat{\cP}_{\origin,n}^\maybeprime$ be measurable.
	$$
	\P{\smash{\extypcellany}\in D}
	= \int_{D}
	e^{-\Phi_\beta^\maybeprime(\widehat{P})}
	\nu_{\beta;\origin;n}^\maybeprime (\dint \widehat{P}).
	$$
\end{lemma}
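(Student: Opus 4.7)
The plan is to follow the chain of identities sketched in the paragraphs preceding the lemma and make each step rigorous. My starting point is \cref{typical-distribution-reformulation}, which expresses $\P{\smash{\extypcellany}\in D}$ as an integral over $\kappa\R_+$ of the probability that the intersection of the random half-spaces $\widetilde{\eta}_{\beta;(\origin,h)}^\maybeprime$ lies in the slice of $D$ at height $h$.

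The first step is to rewrite that probability using the fact that a polytope with $n$ facets is represented uniquely (up to permutation) by an $n$-tuple of half-spaces. For $D \subset \widehat{\cP}_{\origin,n}^\maybeprime$, the event $\{(\smash{(\origin,h)},\bigcap_{\halfspace \in \widetilde{\eta}} \halfspace)\in D\}$ equals
\[
	\sum_{\halfspace_{1:n}\in (\widetilde{\eta}_{\beta;(\origin,h)}^\maybeprime)^n_{\ne}} \1{((\origin,h),\bigcap_{i=1}^n \halfspace_i)\in D}\,\1{\forall \halfspace \in \widetilde{\eta}_{\beta;(\origin,h)}^\maybeprime,\ \bigcap_{i=1}^n \halfspace_i \subset \halfspace }
\]
after dividing by $n!$ (for the permutations of the $n$-tuple). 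Taking expectation and applying the multivariate Mecke formula to the Poisson process $\widetilde{\eta}_{\beta;(\origin,h)}^\maybeprime$ (which has intensity $\widetilde{\mu}_{\beta;(\origin,h)}^\maybeprime$ by \eqref{e:tilde-mu}) converts the sum into an integral against $(\widetilde{\mu}_{\beta;(\origin,h)}^\maybeprime)^{\otimes n}$; the remaining probability factor becomes the void probability $\P{\widetilde{\eta}_{\beta;(\origin,h)}^\maybeprime \cap \{\halfspace :\bigcap_i \halfspace_i \not\subset \halfspace\} = \emptyset}$. Folding the $1/n!$ into the $n$-fold product measure yields the measure $\widetilde{\mu}_{\beta;(\origin,h);n}^\maybeprime$ on $\cP_n$ defined in \eqref{eq:polytope_measure}.

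The second step is to identify the void probability with $e^{-\Phi_\beta^\maybeprime((\origin,h),P)}$. By \cref{pi-inverse-b}, $\halfspace \not\supset P$ holds for a half-space $\halfspace = \bop((\origin,h),x')$ if and only if $x'$ lies in some arc $\arc((\origin,h),\halfspace ')$ with $P \not\subset \halfspace '$; by definition \eqref{e:flower2} the union of these arcs is exactly $\flower((\origin,h),P)$. Thus the set of ``bad'' half-spaces in $\widetilde{\eta}_{\beta;(\origin,h)}^\maybeprime$ corresponds under $\bop$ to points of $\eta_\beta^\maybeprime$ lying in $\flower((\origin,h),P)$, and the void probability of a Poisson process on this set equals $\exp(-\mu_\beta^\maybeprime(\flower((\origin,h),P))) = \exp(-\Phi_\beta^\maybeprime((\origin,h),P))$ by \eqref{e:content}.

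Combining these two identifications reduces $\P{\smash{\extypcellany}\in D}$ to
\[
	\frac{\gamma c^\maybeprime_{d+1,\beta}}{\Lambda_\beta^\maybeprime} \int_{\kappa\R_+} \int_{\cP_n} \1{((\origin,h),P)\in D}\, e^{-\Phi_\beta^\maybeprime((\origin,h),P)}\, \widetilde{\mu}_{\beta;(\origin,h);n}^\maybeprime(\dint P)\,(\kappa h)^{\kappa\beta}\dint h,
\]
which, by the defining formula for $\nu_{\beta;\origin;n}^\maybeprime$, is exactly $\int_D e^{-\Phi_\beta^\maybeprime(K)}\nu_{\beta;\origin;n}^\maybeprime(\dint K)$. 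The only subtle point I expect is the bookkeeping around the $1/n!$ normalization and the requirement that the $n$-tuple of half-spaces actually defines a polytope lying in $\cP_n$ (so that the representation is indeed unique up to $\mathfrak{S}_n$); this is handled because $D\subset \widehat{\cP}_{\origin,n}^\maybeprime$ already restricts the indicator to configurations where $\bigcap_{i=1}^n \halfspace_i \in \cP_n$, and the almost-sure general position of $\eta_\beta^\maybeprime$ guarantees that no redundant half-spaces appear.
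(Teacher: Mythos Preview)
Your proof is correct and follows essentially the same approach as the paper, which in fact carries out the derivation in the paragraphs immediately preceding the lemma rather than in a separate proof block. If anything, you are slightly more careful than the paper about the bookkeeping: you sum over ordered distinct $n$-tuples in $(\widetilde{\eta}_{\beta;(\origin,h)}^\maybeprime)^n_{\ne}$ and explicitly divide by $n!$, whereas the paper's displayed identity leaves the $n!$ implicit and only absorbs it later through the definition of $\widetilde{\mu}_{\beta;(\origin,h);n}^\maybeprime$.
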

\begin{proof}
	Recall from \cref{typical-distribution-reformulation} that 
	$$
	\P{\smash{\extypcellany}\in D}
	= \frac{\gamma c^\maybeprime_{d+1,\beta}}{\Lambda_\beta^\maybeprime}
	\int_{\kappa \R_+}
	\P{\left((\origin,h),\bigcap_{\halfspace \in \widetilde{\eta}_{\beta;(\origin,h)}^\maybeprime} \halfspace \right) \in D}
	(\kappa h)^{\kappa \beta} \dint h.
	$$
	Since the smallest tuple of half-spaces defining a polytope is unique up to a permutation,
	\[
		\1{\left((\origin,h),\bigcap_{\mathclap{\halfspace \in \widetilde{\eta}_{\beta;(\origin,h)}^\maybeprime}} \halfspace \right) \in D}
		=
		\sum_{\halfspace_{1:n}\in (\widetilde{\eta}_{\beta;(\origin,h)}^\maybeprime)^n}
		\1{\left((\origin,h),\bigcap_{i=1}^n \halfspace_i\right) \in D}
		\1{\forall \halfspace \in\widetilde{\eta}_{\beta;(\origin,h)}^\maybeprime,\ \bigcap_{i=1}^n \halfspace_i \subset \halfspace }.
	\]
	Taking the expectation, and applying the Multivariate Mecke equation (see e.g.\ \cite{last_lectures_2017}, Theorem 4.4) we get
	\begin{align*}
		&\P{\left((\origin,h),\bigcap_{\halfspace \in \widetilde{\eta}_{\beta;(\origin,h)}^\maybeprime} \halfspace \right) \in D}
		\\&=
		\int_{(\halfspaces)^n} \1{\left((\origin,h),\bigcap_{i=1}^n \halfspace_i\right) \in D}
		\P{\forall \halfspace \in\widetilde{\eta}_{\beta;(\origin,h)}^\maybeprime,\ \bigcap_{i=1}^n \halfspace_i \subset \halfspace }
		(\widetilde{\mu}_{\beta;(\origin,h)}^\maybeprime)^{\otimes n} (\dint \halfspace_{1:n}),
	\end{align*}
	which can be further simplified by employing the definition \eqref{eq:polytope_measure} of $\widetilde{\mu}_{\beta;(\origin,h);n}^\maybeprime$.
	This brings us finally to this expression:
	\begin{align*}
		\P{\smash{\extypcellany}\in D}
		&= \frac{\gamma c^\maybeprime_{d+1,\beta}}{\Lambda_\beta^\maybeprime}
		\int_{\kappa \R_+}
		\int_{\cP_n} \1{((\origin,h),P) \in D}
		\P{\forall \halfspace \in\widetilde{\eta}_{\beta;(\origin,h)}^\maybeprime,\ P \subset \halfspace }
		\widetilde{\mu}_{\beta;(\origin,h);n}^\maybeprime (\dint P)
		(\kappa h)^{\kappa \beta} \dint h 
		\\&= \int_{D} \P{\widetilde{\eta}_{\beta;(\origin,h)}^\maybeprime \cap \{\halfspace \in\overline{\halfspaces} \mid P \not\subset \halfspace \}  =\emptyset} \nu_{\beta;\origin;n}^\maybeprime (\dint \widehat{P}) .
	\end{align*}
	Finally, notice that the probability inside the integral can be written as
	$$
	\P{\widetilde{\eta}_{\beta;(\origin,h)}^\maybeprime \cap \{\halfspace \in\overline{\halfspaces} \mid P \not\subset \halfspace \} = \emptyset}
	= \P{\eta_\beta^\maybeprime\cap \flower((\origin,h),P) = \emptyset}
	= e^{-\Phi_\beta^\maybeprime((\origin,h),P)} ,
	$$
	where the first equality follows from the definition \eqref{e:tilde-mu} of $\widetilde{\mu}_{\beta;(\origin,h)}^\maybeprime$, \cref{pi-inverse-b} and the definition \eqref{e:flower2} of the $\flower((\origin,h),P)$, and the second equality follows from the definition \eqref{e:content} of $\Phi_\beta^\maybeprime$.
\end{proof}

\subsection{Homeomorphisms and complementary theorem}
\label{s:homeo_and_CT}

Let $\widehat{\convexbody}\in\widehat{\K}$ with $0 < \Phi_\beta^\maybeprime(\widehat{\convexbody}) < \infty$. As explained below for $\widehat{\convexbody} = \widehat{P} \in \widehat{\cP}_{\origin,n}^\maybeprime$, the element $\widehat{\convexbody}$ can be uniquely determined by its $\Phi$-content and its \textbf{shape}, i.e.\ its class in $\widehat{\K} / \R_+$ where scalars $c\in\R_+$ act on $\widehat{\K}$ by scaling $\scale{c}$.
For any element of $\widehat{\cP}_{\origin,n}^\maybeprime$, its shape is an element of
$\widehat{\cP}_{\origin,n}^\maybeprime / \R_+ \cong \cP_n^\maybeprime,$
where $\cP_n^\prime \coloneqq \{P\in\cP_n \mid P \subset \B^d\}$.
We can identify it with functions
\begin{equation}
	\label{e:shape}
	\begin{split}
		\shape^\maybeprime \colon \phantom{((\origin,h),P)} \llap{$\widehat{\cP}_{\origin,n}^\maybeprime$} & \rightarrow \cP_n^\maybeprime
		\\
		((\origin,h),P) & \mapsto \frac{1}{\sqrt{\kappa h}} P.
	\end{split}
\end{equation}
We consider the homeomorphisms
$$
\begin{aligned}
	\mathfrak{h}^\maybeprime_\beta \colon \widehat{\cP}_{\origin,n}^\maybeprime & \rightarrow (0,\infty)\times \mathcal{P}^\maybeprime_n
	\\
	\widehat{P} & \mapsto \left( \Phi_\beta^\maybeprime(\widehat{P}), \shape^\maybeprime(\widehat{P}) \right),
\end{aligned}
$$
and observe that the pushforward measures of $\nu_{\beta;\origin;n}^\maybeprime$ (defined in \eqref{e:def-nu-beta-o-n}) split nicely due to the scaling properties (see \cref{scaling-nu}):
$$\mathfrak{h}^\maybeprime_\beta(\nu_{\beta;\origin;n}^\maybeprime)((0,b) \times C) =
b^{n + \frac{\kappa 2 \beta+2}{\kappa 2 \beta+d+2}} \mathfrak{h}^\maybeprime_\beta(\nu_{\beta;\origin;n}^\maybeprime)((0,1) \times C).$$
This means that
\begin{equation}
	\label{e:splitting}
	\mathfrak{h}^\maybeprime_\beta(\nu_{\beta;\origin;n}^\maybeprime) 
	= \lambda_1^{(n + \frac{\kappa 2 \beta+2}{\kappa 2 \beta+d+2})} \otimes \tilde{\nu}_{\beta;\origin;n}^\maybeprime,
\end{equation}
where $\tilde{\nu}_{\beta;\origin;n}^\maybeprime (\cdot) \coloneq \mathfrak{h}^\maybeprime_\beta(\nu_{\beta;\origin;n}^\maybeprime)((0,1) \times \cdot)$ is a simplified notation, and where $\lambda_1^{(r)}$ is the $r$-homogeneous measure on $(0,\infty)$ defined by \eqref{e:lambdakr}.
With this observation we are finally ready to prove the main result of this section.
Recall that the typical cell is denoted $\smash{\extypcellany} = (\smash{x_\typcellany}, \smash{\typcellany})$.
\begin{theorem}[Complementary theorem]
	\label{complementary}
	Fix $\kappa$ and $\beta$. 
	Let $n\geq d+1$ be an integer.
	\begin{enumerate}
		\item For any Borel set of shapes $S \subset \cP_n^\maybeprime$,
		$$
		\P{
			\shape^\maybeprime(\smash{\extypcellany})\in S}
		=
		\Gamma\left(n + \frac{\kappa 2 \beta+2}{\kappa 2 \beta+d+2}+1\right)
		\int_{\widehat{\cP}_{\origin,n}^\maybeprime}
		\1{\shape^\maybeprime(\widehat{P})\in S}
		\1{\Phi_\beta^\maybeprime(\widehat{P}) < 1}
		\nu_{\beta;\origin;n}^\maybeprime (\dint \widehat{P}).
		$$		
		\item If we condition $\smash{\typcellany}$ to have $n$ facets, then
		\begin{enumerate}
			\item $\shape^\maybeprime(\smash{\extypcellany})$ and $\Phi_\beta^\maybeprime(\smash{\extypcellany})$ are independent random variables,
			\item $\Phi_\beta^\maybeprime(\smash{\extypcellany})$ is $\operatorname{Gamma}(n + \frac{\kappa 2 \beta+2}{\kappa 2 \beta+d+2},1)$-distributed, and
			\item $\shape^\maybeprime(\smash{\extypcellany})$ is distributed with probability measure
			$\tilde{\nu}_{\beta;\origin;n}^\maybeprime(\cdot)/\tilde{\nu}_{\beta;\origin;n}^\maybeprime(\cP_n^\maybeprime)$.
		\end{enumerate}
	\end{enumerate}
\end{theorem}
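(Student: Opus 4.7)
The plan is to combine Lemma~\ref{typ-n-fac} with the change of variables given by the homeomorphism $\mathfrak{h}^\maybeprime_\beta$ and the key splitting identity \eqref{e:splitting}. All the substantial work has already been done in establishing \eqref{e:splitting}; what remains is a clean bookkeeping exercise in which the Gamma function emerges from a familiar integral.

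Set $\alpha \coloneqq n + \frac{\kappa 2\beta+2}{\kappa 2\beta+d+2}$ for brevity. First, I would apply Lemma~\ref{typ-n-fac} with the set $D = \{ K \in \widehat{\cP}_{\origin,n}^\maybeprime : \shape^\maybeprime(K) \in S \}$ to obtain
\[
\P{\smash{\typcellany}\in\cP_n,\ \shape^\maybeprime(\smash{\extypcellany})\in S}
= \int_{\widehat{\cP}_{\origin,n}^\maybeprime} \1{\shape^\maybeprime(K)\in S}\, e^{-\Phi_\beta^\maybeprime(K)}\, \nu_{\beta;\origin;n}^\maybeprime(\dint K).
\]
Since $\mathfrak{h}^\maybeprime_\beta$ is a homeomorphism and $\Phi_\beta^\maybeprime$, $\shape^\maybeprime$ are its two coordinate functions, pushing forward via $\mathfrak{h}^\maybeprime_\beta$ and invoking \eqref{e:splitting} would turn the right-hand side into
\[
\int_{(0,\infty)\times \cP_n^\maybeprime} \1{s\in S}\, e^{-t}\, \lambda_1^{(\alpha)}(\dint t)\, \tilde{\nu}_{\beta;\origin;n}^\maybeprime(\dint s)
= \left(\int_0^\infty \alpha\, t^{\alpha-1} e^{-t}\,\dint t\right) \tilde{\nu}_{\beta;\origin;n}^\maybeprime(S) = \Gamma(\alpha+1)\, \tilde{\nu}_{\beta;\origin;n}^\maybeprime(S),
\]
using the explicit form of $\lambda_1^{(\alpha)}$ from \eqref{e:lambdakr} and the definition of the Gamma function.

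To recover the stated integral representation in part (1), I would re-express $\tilde{\nu}_{\beta;\origin;n}^\maybeprime(S)$ by its definition as $\mathfrak{h}^\maybeprime_\beta(\nu_{\beta;\origin;n}^\maybeprime)((0,1)\times S)$, which unwinds (again via the pushforward) precisely to
\[
\int_{\widehat{\cP}_{\origin,n}^\maybeprime} \1{\shape^\maybeprime(P)\in S}\, \1{\Phi_\beta^\maybeprime(P)<1}\, \nu_{\beta;\origin;n}^\maybeprime(\dint P),
\]
yielding part (1) on the nose.

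For part (2), taking $S = \cP_n^\maybeprime$ in the displayed identity gives $\P{\smash{\typcellany}\in\cP_n} = \Gamma(\alpha+1)\, \tilde{\nu}_{\beta;\origin;n}^\maybeprime(\cP_n^\maybeprime)$, which must be positive and finite (since $\smash{\typcellany}$ almost surely has some finite number of facets). Dividing, the conditional joint law of $(\Phi_\beta^\maybeprime(\smash{\extypcellany}), \shape^\maybeprime(\smash{\extypcellany}))$ given $\smash{\typcellany}\in\cP_n$ is proportional to the product measure $e^{-t}\,\lambda_1^{(\alpha)}(\dint t) \otimes \tilde{\nu}_{\beta;\origin;n}^\maybeprime(\dint s)$ on $(0,\infty)\times\cP_n^\maybeprime$. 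The product form immediately yields independence (2a); the first marginal has density $\frac{t^{\alpha-1}e^{-t}}{\Gamma(\alpha)}$, i.e.\ the $\Gamma_{1,\alpha}$-law (2b); and the second marginal is the normalized measure $\tilde{\nu}_{\beta;\origin;n}^\maybeprime(\cdot)/\tilde{\nu}_{\beta;\origin;n}^\maybeprime(\cP_n^\maybeprime)$ (2c). The only minor subtlety I foresee is justifying the change of variables measurably and checking that $\mathfrak{h}^\maybeprime_\beta$ really is a homeomorphism between the claimed spaces (in particular, that the $\Phi$-content is strictly positive on $\widehat{\cP}_{\origin,n}^\maybeprime$, which follows from Lemma~\ref{phi-point} applied to any extreme point of $P$); all the rest is routine once \eqref{e:splitting} is on the table.
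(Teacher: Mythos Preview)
Your proposal is correct and follows essentially the same route as the paper: apply Lemma~\ref{typ-n-fac}, push forward by $\mathfrak{h}^\maybeprime_\beta$, invoke the splitting \eqref{e:splitting}, and read off the Gamma integral. The only cosmetic difference is that the paper carries a Borel set $A$ for the $\Phi$-content alongside $S$ from the outset, arriving at the joint law in one step, whereas you first compute the $S$-marginal and then return to the joint for part~(2); the content is identical.
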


\begin{proof}
	Observe that by applying the measure's splitting \eqref{e:splitting} in the statement of \cref{typ-n-fac} we get
	\begin{align*}
		\P{\smash{\typcellany}\in\cP_n,\ \shape^\maybeprime(\smash{\extypcellany})\in S,\ \Phi_\beta^\maybeprime(\smash{\extypcellany})\in A}
		&= \int_S \int_A
		e^{-t}
		\lambda_1^{(n + \frac{\kappa 2 \beta+2}{\kappa 2 \beta+d+2})} (\dint t)
		\tilde{\nu}_{\beta;\origin;n}^\maybeprime (\dint P).
		\\&= 
		\tilde{\nu}_{\beta;\origin;n}^\maybeprime (S)
		\left(n + \frac{\kappa 2 \beta+2}{\kappa 2 \beta+d+2}\right)
		\int_A t^{n + \frac{\kappa 2 \beta+2}{\kappa 2 \beta+d+2} - 1} e^{-t} \dint t.
	\end{align*}	
	The first part of the theorem is then recovered by applying $A = (0,\infty)$ and using that by definition
	$$ \tilde{\nu}_{\beta;\origin;n}^\maybeprime (S)
	= \nu_{\beta;\origin;n}^\maybeprime\left(
		\left\{
			\widehat{P} \in \widehat{\cP}_{\origin,n}^\maybeprime \mid \shape^\maybeprime(\widehat{P})\in S,\ \Phi_\beta^\maybeprime(\widehat{P}) < 1
		\right\}
	\right).
	$$	
	For the second part, it follows from the same calculation that
	$$
	\P{\shape^\maybeprime(\smash{\extypcellany})\in S \mid \smash{\typcellany}\in\cP_n}
	=
	\frac{\tilde{\nu}_{\beta;\origin;n}^\maybeprime (S)}{\tilde{\nu}_{\beta;\origin;n}^\maybeprime(\cP_n^\maybeprime)}
	$$
	and
	\begin{equation*}
		\P{\Phi_\beta^\maybeprime(\smash{\extypcellany})\in A \mid \smash{\typcellany}\in\cP_n}
		=
		\frac{1}{\Gamma\left(n + \frac{\kappa 2 \beta+2}{\kappa 2 \beta+d+2}\right)}
		\int_A t^{n + \frac{\kappa 2 \beta+2}{\kappa 2 \beta+d+2} - 1} e^{-t} \dint t.
		\qedhere
	\end{equation*}
\end{proof}

\begin{rem}
	\label{complementary-variable-change}
	Fix $\kappa$ and $\beta$. Let $b>0$. Notice that	
	$$
	\int_{\widehat{\cP}_{\origin,n}^\maybeprime}
	\1{\Phi_\beta^\maybeprime(\widehat{P}) < b}
	\nu_{\beta;\origin;n}^\maybeprime (\dint \widehat{P})
	=
	b^{n + \frac{\kappa 2 \beta + 2}{\kappa 2 \beta + d + 2}}
	\int_{\widehat{\cP}_{\origin,n}^\maybeprime}
	\1{\Phi_\beta^\maybeprime(\widehat{P}) < 1}
	\nu_{\beta;\origin;n}^\maybeprime (\dint \widehat{P}).
	$$
\end{rem}

In the next corollary we give an alternative expression for the probability $\P{\smash{\typcellany}\in\cP_n,\ \shape^\maybeprime(\smash{\extypcellany})\in S}$.
\begin{corollary}
	\label{shape}
	Fix $\kappa$ and $\beta$. Let $S \subset \cP_n^\maybeprime$ be measurable. Then 
	$$
		\P{
			\shape^\maybeprime(\smash{\extypcellany})\in S}
		=
		\frac{\gamma c^\maybeprime_{d+1,\beta}}{\Lambda_\beta^\maybeprime}
		\frac{\Gamma\left(n + \frac{\kappa 2 \beta+2}{\kappa 2 \beta+d+2}\right)}{\kappa \beta + \frac{d}{2} + 1}
		\int_{S}
		\Phi_\beta^\maybeprime((\origin,\kappa),P)^{-n-\frac{\kappa 2 \beta+2}{\kappa 2 \beta+d+2}}
		\widetilde{\mu}_{\beta;(\origin,\kappa);n}^\maybeprime (\dint P).
	$$
\end{corollary}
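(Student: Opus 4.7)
The natural starting point is the first part of the complementary theorem,
\[
\P{\typcellany\in\cP_n,\ \shape^\maybeprime(\extypcellany)\in S}
=
\Gamma\!\left(n + \frac{\kappa 2 \beta+2}{\kappa 2 \beta+d+2}+1\right)
\int_{\widehat{\cP}_{\origin,n}^\maybeprime}
\1{\shape^\maybeprime(\widehat{P})\in S}
\1{\Phi_\beta^\maybeprime(\widehat{P}) < 1}
\nu_{\beta;\origin;n}^\maybeprime (\dint \widehat{P}) ,
\]
and expand $\nu_{\beta;\origin;n}^\maybeprime$ using its definition. The plan is then to rescale away the dependence on the height coordinate $h$ so that the inner measure becomes $\widetilde{\mu}_{\beta;(\origin,\kappa);n}^\maybeprime$, the integrand depends on $P$ only through $\Phi_\beta^\maybeprime((\origin,\kappa),P)$, and the integral over $h$ can be computed explicitly.

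Concretely, for each fixed $h$ with $\kappa h>0$ I would perform the change of variables $P = \sqrt{\kappa h}\,P'$ in the inner integral. Using the homogeneity of $\widetilde{\mu}_{\beta;(\origin,h);n}^\maybeprime$ (\cref{scaling-tilde-n}) this gives an extra Jacobian factor $(\kappa h)^{n(\kappa 2\beta+d+2)/2}$. The shape condition $\shape^\maybeprime((\origin,h),P) = P/\sqrt{\kappa h}\in S$ becomes simply $P'\in S$, and by the homogeneity of $\Phi_\beta^\maybeprime$ (\cref{scaling-Phi}) together with the identity $\scale{\sqrt{\kappa h}}((\origin,\kappa),P') = ((\origin,h),\sqrt{\kappa h}\,P')$, the condition $\Phi_\beta^\maybeprime((\origin,h),P)<1$ becomes $\kappa h < \Phi_\beta^\maybeprime((\origin,\kappa),P')^{-2/(\kappa 2\beta+d+2)}$.

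After an application of Fubini the integral over $h$ becomes, after the substitution $t=\kappa h>0$,
\[
\int_0^{\Phi_\beta^\maybeprime((\origin,\kappa),P')^{-2/(\kappa 2\beta+d+2)}}
t^{\kappa \beta + n(\kappa 2 \beta + d + 2)/2}\dint t
=
\frac{\Phi_\beta^\maybeprime((\origin,\kappa),P')^{-n-\frac{\kappa 2\beta+2}{\kappa 2\beta+d+2}}}{\kappa\beta+1+n(\kappa 2\beta+d+2)/2} ,
\]
where the exponent in the final answer arises because $\tfrac{2(\kappa\beta+1)}{\kappa 2\beta+d+2}+n = n+\tfrac{\kappa 2\beta+2}{\kappa 2\beta+d+2}$. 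Plugging this back and using the identity $\Gamma(z+1)=z\Gamma(z)$ on the gamma prefactor, the remaining task is the algebraic check
\[
\frac{\kappa\beta+1+n(\kappa 2\beta+d+2)/2}{n+\frac{\kappa 2\beta+2}{\kappa 2\beta+d+2}}
=
\kappa\beta+\tfrac{d}{2}+1 ,
\]
which is straightforward (clear denominators by $(\kappa 2\beta+d+2)/2$). This combines the prefactors into $\Gamma(n+\tfrac{\kappa 2\beta+2}{\kappa 2\beta+d+2})/(\kappa\beta+\tfrac{d}{2}+1)$ and yields the claimed identity.

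The only real obstacle is bookkeeping: the simultaneous handling of $\kappa=\pm 1$, the compatibility of the scaling action $\scale{\sqrt{\kappa h}}$ with both coordinates of $\widehat{\cP}_{\origin,n}^\maybeprime$, and the correct matching of the various homogeneity exponents. Once one commits to the substitution $P = \sqrt{\kappa h}\,P'$ and the change of variable $t=\kappa h$, everything reduces to evaluating a power integral and simplifying one algebraic expression.
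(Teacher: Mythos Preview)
Your proposal is correct and follows essentially the same route as the paper: expand $\nu_{\beta;\origin;n}^\maybeprime$ via its definition, perform the substitution $P=\sqrt{\kappa h}\,P'$ (the paper writes $h=\kappa\tilde h$, $P=\sqrt{\tilde h}\,\tilde P$) using the homogeneity of $\widetilde{\mu}_{\beta;(\origin,h);n}^\maybeprime$ and $\Phi_\beta^\maybeprime$, apply Fubini, and evaluate the resulting power integral in $t=\kappa h$. The only difference is cosmetic: the paper stops at ``calculating the inner integral yields the result'', whereas you spell out the integral and the algebraic identity $\bigl(\kappa\beta+1+n(\kappa 2\beta+d+2)/2\bigr)\big/\bigl(n+\tfrac{\kappa 2\beta+2}{\kappa 2\beta+d+2}\bigr)=\kappa\beta+\tfrac{d}{2}+1$ explicitly.
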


\begin{proof}
	Recall from the definitions \eqref{e:scale} of the scaling operator $\scale{c}$ and \eqref{e:shape} of $\shape^\maybeprime$ that 
	$$\widehat{P} = \scale{\sqrt{\kappa h}} ((\origin,\kappa),\shape^\maybeprime(\widehat{P})),$$
	for any $\widehat{P} = ((\origin,h),P)\in \widehat{\cP}_{\origin,n}^\maybeprime$.
	By \cref{scaling-Phi}, it follows that
	$$\Phi_\beta^\maybeprime(\widehat{P}) = (\kappa h)^{\kappa \beta + \frac{d}{2} + 1} \Phi_\beta^\maybeprime((\origin,\kappa),\shape^\maybeprime(\widehat{P})).$$
	By the definition of $\nu_{\beta;\origin;n}^\maybeprime$,
	\begin{align*}
		&\int_{\widehat{\cP}_{\origin,n}^\maybeprime}
		\1{\shape^\maybeprime(\widehat{P})\in S}
		\1{\Phi_\beta^\maybeprime(\widehat{P}) < 1}
		\nu_{\beta;\origin;n}^\maybeprime (\dint \widehat{P})
		\\&=
		\frac{\gamma c^\maybeprime_{d+1,\beta}}{\Lambda_\beta^\maybeprime}
		\int_{\kappa \R_+}
		\int_{\cP_n}
		\1{\frac{1}{\sqrt{\kappa h}} P\in S}
		\1{\Phi_\beta^\maybeprime((\origin,h),P) < 1}
		\widetilde{\mu}_{\beta;(\origin,h);n}^\maybeprime (\dint P)
		(\kappa h)^{\kappa \beta} \dint h
		\\&=
		\frac{\gamma c^\maybeprime_{d+1,\beta}}{\Lambda_\beta^\maybeprime}
		\int_{\R_+}
		\int_{S}
		\1{\tilde{h}^{\kappa \beta + \frac{d}{2} + 1} \Phi_\beta^\maybeprime((\origin,\kappa),\tilde{P}) < 1}
		\tilde{h}^{n(\kappa \beta + \frac{d}{2} + 1)} \widetilde{\mu}_{\beta;(\origin,\kappa);n}^\maybeprime (\dint \tilde{P})
		\tilde{h}^{\kappa \beta} \dint \tilde{h}
		\\&=
		\frac{\gamma c^\maybeprime_{d+1,\beta}}{\Lambda_\beta^\maybeprime}
		\int_{S}
		\int_{\R_+}
		\1{\tilde{h} < \Phi_\beta^\maybeprime((\origin,\kappa),\tilde{P})^{-\frac{1}{\kappa \beta + \frac{d}{2} + 1}}}
		\tilde{h}^{n(\kappa \beta + \frac{d}{2} + 1) + \kappa \beta} 
		\dint \tilde{h} \;
		\widetilde{\mu}_{\beta;(\origin,\kappa);n}^\maybeprime (\dint \tilde{P}),
	\end{align*}
	where the second equality is obtained by the change of variables $h = \kappa \tilde{h}$ and $P = \sqrt{\tilde{h}} \tilde{P}$.	
	Calculating the inner integral in the right-hand side yields the result.
	
\end{proof}

\section[Lower bounds on the distribution of typical number of facets]{Lower bound on $\P{\smash{\typcellany}\in\cP_n}$}
\label{sec:lower}

In this section we establish lower bounds on $\P{\smash{\typcellany}\in\cP_n}$ for the $\beta$-Voronoi model (\Cref{lower-bound-beta}) and the $\beta'$-Voronoi model (\Cref{lower-bound-beta-prime}).
In the previous sections we established a set-up which allows us to represent these probabilities in the form of integrals, which unfortunately are not tractable due to the complexity of the geometric constraints involved.
However, in order to establish lower bounds it is enough to consider $\P{\smash{\typcellany}\in\cP_n,\ \shape^\maybeprime(\smash{\extypcellany})\in S^\maybeprime}$ for conveniently chosen sets of shapes $S^\maybeprime$.
These sets, constructed below (\crefrange{lem:delta-rho}{in-a-ball}) and illustrated by \Cref{fig:regular}, consist of polytopes which are, in a sense, almost regular and parameterized carefully so that geometric constraints do not interfere anymore with the calculations.
Then we will obtain the desired lower bounds by mean of \cref{shape} and elementary analysis.

In order to construct almost regular polytopes, our first order of business is to find an almost regular set of directions on a sphere. \Cref{lem:delta-rho} below formalizes that intuition. 

\begin{lemma}[{\cite[Lemma 6.1.7]{bonnet_thesis}}]
	\label{lem:delta-rho}
	For each $n\in\N$ one can choose $u_1,\ldots,u_n\in\Sph^{d-1}$, as well as $\delta_n > \rho_n>0$ such that
	\begin{itemize}
		\item \textbf{($\delta_n$-covering)} For any $u\in \Sph^{d-1}$, there exists $i\in[n]$ such that 
		\begin{equation}
			\label{eq:delta-cover}
			\norm{u - u_i} < \delta_n;
		\end{equation}
		\item \textbf{($2\rho_n$-packing)} For any $i,j\in[n]$ such that $i\ne j$,
		\begin{equation}
			\label{eq:2rho-packing}
			\norm{u_i - u_j} > 4\rho_n;
		\end{equation}
		\item \textbf{(order of growth)} As $n$ grows to infinity,
		\begin{equation}
			\label{eq:delta-rho-order}
			\delta_n = \Theta(n^{-\frac{1}{d-1}}) = \rho_n.
		\end{equation}
	\end{itemize}
\end{lemma}

For $i\in[n]$, consider the cap $S_{u_i}(\rho_n) = \Sph^{d-1} \cap \B^d_{u_i}(\rho_n)$ and set
\begin{align}
	\label{e:defBigH}
	\halfspacesset_i 
	\coloneqq \halfspacesset(S_{u_i}(\rho_n), (1-\rho_n^2, 1))
	= \left\{\halfspace(u,t) \mid u\in S_{u_i}(\rho_n),\ t\in (1-\rho_n^2, 1)\right\} .
\end{align}
We can now define the almost regular polytopes mentioned above.
These are the ones of the form $P = \cap_{i=1}^n \halfspace_i$, with $\halfspace_i\in r \halfspacesset_i$, for some fixed $r>0$, see \cref{fig:regular}.
\begin{figure}[ht]
	\centering
	\includegraphics[width=0.5\textwidth]{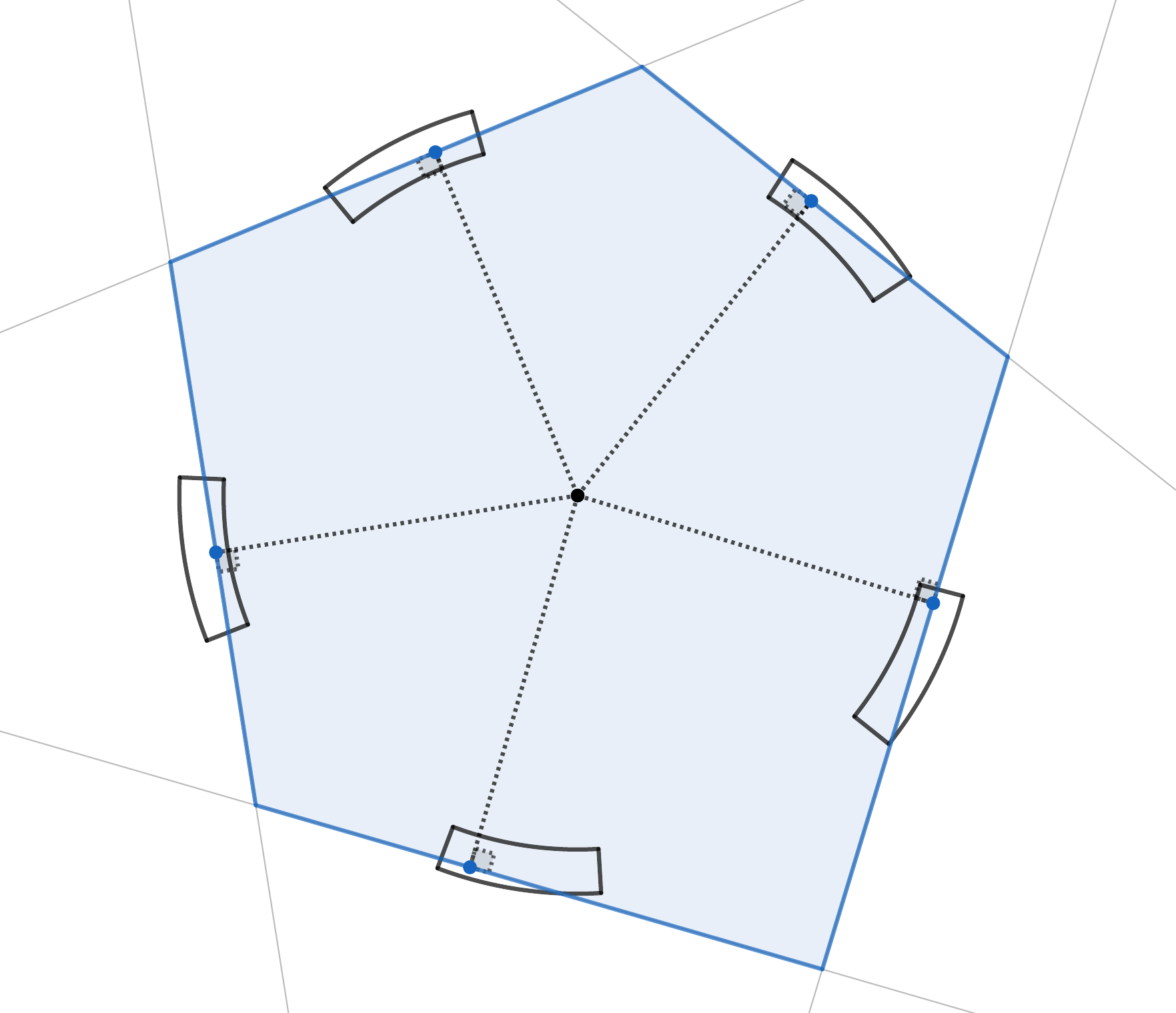}
	\caption{An almost regular polygon $P$ from \cref{nregular}}
	\label{fig:regular}
\end{figure}
The next lemma provides a bound on such $P$, and therefore shows that it is indeed a polytope, i.e.\ a \textit{bounded} polyhedron.
For this we set $n_0$ such that 
\[ \delta_n < 1/\sqrt{2} \text{ for all } n\ge n_0 ,\]
which is possible due to \Cref{lem:delta-rho}.

\begin{lemma}
	\label{in-a-ball}
	Let $n\geq n_0$ and $R>r>0$ be such that $\frac{r}{R} < 1 - 2 \delta_n^2$.
	Let $\halfspace_i\in r\halfspacesset_i$, $i\in[n]$.
	Then $P = \bigcap_{i=1}^n \halfspace_i \subset \B^d_\origin(R)$.
\end{lemma}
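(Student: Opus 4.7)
The plan is to argue by contrapositive: given any $v\in\R^d$ with $\norm{v}\ge R$, I will exhibit an index $i$ such that $v\notin\halfspace_i$, which forces $v\notin P$. Set $w=v/\norm{v}\in\Sph^{d-1}$. By the maximality of $\{u_1,\ldots,u_n\}$ (via the packing--covering argument used in the proof of \cref{delta}), the caps $S_{u_i}(\delta_n)$ cover $\Sph^{d-1}$, so there is an index $i$ with $\norm{w-u_i}\le\delta_n$.

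For this $i$, write $\halfspace_i=\halfspace(u,t)$ with $u\in S_{u_i}(\rho_n)$ and $t\in r(1-\rho_n^2,1)$. Then $\norm{w-u}\le\norm{w-u_i}+\norm{u_i-u}\le\delta_n+\rho_n$, and the identity $\langle w,u\rangle=1-\tfrac{1}{2}\norm{w-u}^2$ valid for unit vectors yields
$$
	\langle v,u\rangle \;=\; \norm{v}\,\langle w,u\rangle \;\ge\; R\bigl(1-\tfrac{1}{2}(\delta_n+\rho_n)^2\bigr).
$$
Since $\rho_n=c\delta_n$ with $c<\tfrac{1}{8}$, we have $(1+c)^2<4$, hence $\tfrac{1}{2}(\delta_n+\rho_n)^2<2\delta_n^2$. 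Combined with the hypothesis $r/R<1-2\delta_n^2$, this gives $\langle v,u\rangle>R(1-2\delta_n^2)>r>t$, i.e.\ $v\notin\halfspace_i$, completing the argument.

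No step of the calculation is technically difficult: once the $\delta_n$-covering is in hand, everything reduces to a one-line triangle inequality plus cosine-rule estimate. The only point requiring care is the covering claim itself, since maximising $\min_{i\ne j}\norm{u_i-u_j}$ over $n$-tuples is not literally the same as picking a maximal $\delta_n$-packing; the standard packing--covering duality from the proof of \cref{delta} nevertheless yields the coverage at the required scale, and the slack $c<\tfrac{1}{8}$ in the definition of $\rho_n$ leaves plenty of room in the final inequality.
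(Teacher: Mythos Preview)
Your proof is correct and follows essentially the same route as the paper's: both use the $\delta_n$-covering of $\Sph^{d-1}$ by the $u_i$'s (inherited from the argument of \cref{delta}), the triangle inequality to control $\norm{w-u}$, and the cosine identity $\langle w,u\rangle = 1-\tfrac12\norm{w-u}^2$ to finish. The paper frames it as a direct argument after normalising to $r=1$, whereas you frame it contrapositively and keep $r$ general, but the underlying estimate is identical.
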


\begin{proof}
	It is enough to show this for $r=1$.
	Let $a>0$, $u\in\Sph^{d-1}$ be such that $au\in P$. From \eqref{eq:delta-cover} we know that $\norm{u-u_i}<\delta_n$ for some $i\in[n]$.
	Let $\halfspace_i = \halfspace(w,t)$, $w\in S_{u_i}(\rho_n)$, $t\in (1-\rho_n^2, 1)$.
	By the triangle inequality
	$$\norm{u-w} 
	\le \norm{u-u_i} + \norm{u_i - w}
	\le \delta_n + \rho_n 
	< 2 \delta_n.$$	
	From this, since $au \in \halfspace(w,t)$, $t<1$, we have
	\begin{equation*}
		\frac{1}{R} 
		< 1 - 2\delta_n^2 
		< 1 - \frac{1}{2} \norm{u-w}^2 
		= \langle u, w \rangle 
		< \frac{1}{a}
	\end{equation*}
	and therefore $a < R$ which yields the lemma.
\end{proof}

With the next lemma we show that the almost regular polytopes $\cap_{i=1}^n \halfspace_i$ have $n$ facets, i.e.\ that each of the half-spaces $\halfspace_i$ contributes to the intersection. 
\begin{lemma}
	\label{nregular}
	Let $n \ge n_0$ and $\halfspace_i\in \halfspacesset_i$, $i\in[n]$,
	$$P = \bigcap_{i=1}^n \halfspace_i \in \cP_n.$$
\end{lemma}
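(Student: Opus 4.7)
The plan is to show that each of the $n$ half-spaces in the intersection contributes a distinct facet to $P$, while boundedness follows from \cref{in-a-ball}. Since $\delta_n < 1/\sqrt{2}$ gives $1 - 2\delta_n^2 > 0$, applying \cref{in-a-ball} with $r=1$ and some sufficiently large $R$ yields $P \subset R\B^d$, so $P$ is bounded. It therefore suffices to establish that $f_{d-1}(P) = n$.

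For each $i\in[n]$, write $\halfspace_i = \halfspace(w_i, t_i)$ with $w_i\in S_{u_i}(\rho_n)$ and $t_i \in (1-\rho_n^2, 1)$, and consider the candidate witness point $p_i \coloneqq t_i w_i$, which lies on the supporting hyperplane $\hyperplane_i = \partial \halfspace_i$. I will show that $p_i$ lies strictly in the interior of every $\halfspace_j$ for $j\neq i$. Once this is done, a small open neighborhood of $p_i$ inside $\hyperplane_i$ is contained in $P$, so $P \cap \hyperplane_i$ has dimension $d-1$ and is a facet of $P$. Running this over all $i$, and noting that every facet of $P$ must lie in some $\hyperplane_i$, yields $f_{d-1}(P) = n$, hence $P\in\cP_n$.

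The key computation is to bound $\langle p_i, w_j\rangle = t_i \langle w_i, w_j\rangle$. By the triangle inequality and $w_k \in S_{u_k}(\rho_n)$,
\[
    \norm{w_i - w_j} \ge \norm{u_i - u_j} - 2\rho_n \ge (1-2c)\,\delta_n,
\]
so $\langle w_i, w_j\rangle = 1 - \tfrac{1}{2}\norm{w_i-w_j}^2 \le 1 - \tfrac{(1-2c)^2}{2}\delta_n^2$. Using $0 < t_i < 1$ together with this bound (the case $\langle w_i, w_j\rangle \le 0$ is trivial since then $\langle p_i, w_j\rangle \le 0 < t_j$), I obtain $\langle p_i, w_j\rangle \le 1 - \tfrac{(1-2c)^2}{2}\delta_n^2$. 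To conclude $p_i \in \Int(\halfspace_j)$, this must be strictly less than $t_j$, and since $t_j > 1 - \rho_n^2 = 1 - c^2\delta_n^2$, it suffices to verify $c^2 < (1-2c)^2/2$, which holds comfortably for $c<1/8$.

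The whole argument is essentially algebraic bookkeeping; the only mildly delicate step is tuning the constants so that the offset gap $1 - t_j < c^2\delta_n^2$ is dominated by the angular separation gap $1 - \langle w_i, w_j\rangle \ge \tfrac{(1-2c)^2}{2}\delta_n^2$. This is exactly the reason $\rho_n$ was chosen to be a small multiple ($c<1/8$) of $\delta_n$, and the choice of $n_0$ ensuring $\delta_n < 1/\sqrt{2}$ enters only to guarantee boundedness via \cref{in-a-ball} and to keep $t_i$ positive.
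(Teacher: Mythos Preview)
Your proof is correct and follows essentially the same approach as the paper: both arguments reduce to the key inequality $\langle w_i,w_j\rangle < 1-\rho_n^2$ for $i\neq j$, derived from the angular separation $\|u_i-u_j\|\ge\delta_n$ together with $\rho_n=c\delta_n$, $c<1/8$. The only cosmetic difference is the choice of witness point: the paper uses $w_i$ itself (which lies outside $\halfspace_i$ but inside every other $\halfspace_j$, showing non-redundancy), whereas you use $t_i w_i\in\hyperplane_i$ to exhibit a facet directly.
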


\begin{proof}
	By \cref{in-a-ball}, $P$ is bounded and therefore it is a polytope. It remains to check that it has exactly $n$ facets, i.e.\ that every half-space contributes to the intersection.	
	Let $w_1\in S_{u_1}(\rho_n)$, $w_2\in S_{u_2}(\rho_n)$. It is now enough to show that $w_1\in \Int\halfspace(w_2, 1-\rho_n^2)$.
	From \eqref{eq:2rho-packing} and triangle inequality we have
	$$\norm{w_1 - w_2} \ge 4\rho_n - 2\rho_n = 2\rho_n.$$
	Thus
	$$
		\langle w_1, w_2 \rangle
		= 1 - \frac{\norm{w_1 - w_2}^2}{2}
		\le 1 - 2 \rho_n^2,
	$$
	proving the claim.
\end{proof}

\begin{lemma}[Order of the $\Phi$-content of a ball]
	\label{content-of-a-ball}
	\begin{itemize}
		\item[]
		\item $\Phi_\beta\left((\origin,1),r\overline{\B^d}\right)
		= \Theta\left( 1 + r^{2\beta + d + 2} \right)
		= \Theta\left( (1 + r^2)^{\beta + \frac{d}{2} + 1} \right)$ for $r\ge 0$;
		\item $\Phi_\beta^\prime\left((\origin,-1),r\overline{\B^d}\right)
		= \Theta\left( (1-r)^{-\beta+\frac{3}{2}} \right)
		= \Theta\left( (1-r^2)^{-\beta+\frac{3}{2}} \right)$ for $0 \leq r <1$.
	\end{itemize}
\end{lemma}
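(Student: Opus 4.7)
The plan is to combine \cref{phi-in-polar}, the density bounds of \cref{density-on-half-spaces}, and \cref{phi-point}. Since $r\overline{\B^d}$ is a ball centered at the origin, its support function is the constant $\support(r\overline{\B^d}, u) = r$, so \cref{phi-in-polar} reduces the $\Phi$-content to the one-dimensional integral
\[
	\Phi_\beta^\maybeprime((\origin,\kappa), r\overline{\B^d})
	= \omega_d \int_{-\infty}^r m_{\beta;(\origin,\kappa)}^\maybeprime(t) \dint t .
\]

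First I would split the integral at $t = 0$. Taking $r = 0$ in the display above identifies $\omega_d \int_{-\infty}^0 m_{\beta;(\origin,\kappa)}^\maybeprime(t) \dint t$ with $\Phi_\beta^\maybeprime((\origin,\kappa), \{\origin\})$, which by \cref{phi-point} applied with $v = \origin$ is a positive finite constant depending only on $d$ and $\beta$. This sidesteps the negative-$t$ portion of the density (not covered by \cref{density-on-half-spaces}) and reduces the task to controlling $\int_0^r m_{\beta;(\origin,\kappa)}^\maybeprime(t) \dint t$ via the explicit density estimates.

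For the $\beta$-model, \cref{i:density-1} of \cref{density-on-half-spaces} gives $m_{\beta;(\origin,1)}(t) = \Theta((1+t^2)^{\beta + d/2 + 1/2})$ for $t \ge 0$, so it remains to estimate $\int_0^r (1+t^2)^{\beta+d/2+1/2}\dint t$. Splitting on $r \le 1$ vs $r \ge 1$: in the first regime the integrand is $\Theta(1)$, the integral is $O(1)$, and the constant from $\int_{-\infty}^0$ dominates; in the second regime the integrand behaves as $t^{2\beta+d+1}$ for $t$ of order $r$, so the integral is $\Theta(r^{2\beta+d+2})$ and this term dominates the constant. Adding the two contributions yields $\Theta(1+r^{2\beta+d+2})$, equivalent to $\Theta((1+r^2)^{\beta+d/2+1})$ by the same case split.

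For the $\beta'$-model on $0 \le r < 1$, \cref{i:density-2} of \cref{density-on-half-spaces} gives $m_{\beta;(\origin,-1)}^\prime(t) = \Theta((1-t^2)^{-\beta+1/2})$, and since $1+t \in [1,2]$ this equals $\Theta((1-t)^{-\beta+1/2})$. The parameter condition $\beta > d/2 + 1 \ge 3/2$ makes the exponent $-\beta+1/2$ strictly less than $-1$, so an elementary antiderivative gives $\int_0^r (1-t)^{-\beta+1/2} \dint t = \Theta((1-r)^{-\beta+3/2} - 1)$. Added to the constant contribution from $\int_{-\infty}^0$, this is $\Theta((1-r)^{-\beta+3/2})$ uniformly on $[0,1)$: the constant dominates for $r$ bounded away from $1$ while the singular term dominates as $r \to 1^-$. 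The equivalence with $(1-r^2)^{-\beta+3/2}$ again follows from $1+r \in [1,2]$. The only (minor) care-point throughout is the bookkeeping that the fixed constant from $\int_{-\infty}^0$ neither spoils the lower nor the upper bound in either regime, which follows since in both models the claimed right-hand side is itself bounded below by a positive constant on every compact subrange of the admissible $r$'s.
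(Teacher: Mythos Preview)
Your proposal is correct and follows essentially the same route as the paper: both split the one-dimensional integral at $t=0$, identify the negative-$t$ part with the positive constant $\Phi_\beta^\maybeprime((\origin,\kappa),\{\origin\})$ via \cref{phi-point}, and estimate the positive-$t$ part using the density bounds of \cref{density-on-half-spaces}. The only cosmetic difference is that you invoke \cref{phi-in-polar} directly while the paper unfolds the definitions to reach the equivalent expression $\widetilde{\mu}_{\beta;(\origin,\kappa)}^\maybeprime(\halfspacesset(\Sph^{d-1},(-\infty,r)))$.
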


\begin{proof}
	From the definitions of $\Phi$-content in \eqref{e:content}, $\flower(\cdot)$ in \eqref{e:flower2} and $\widetilde{\mu}_{\beta;(\origin,\kappa)}^\maybeprime$ in \eqref{e:tilde-mu} it follows that 
	\begin{equation}
		\begin{aligned}
			\label{e:march13}
		\Phi_\beta^\maybeprime \left((\origin,\kappa),r\overline{\B^d}\right) 
		&= \mu_\beta^\maybeprime\left(\flower \left((\origin,\kappa),r\overline{\B^d}\right) \right)
		\\&= \mu_\beta^\maybeprime\left(\bigcup_{r\overline{\B^d} \not\subset \halfspace \in\overline{\halfspaces}} \arc((\origin,\kappa),\halfspace) \right)	
		\\&= \widetilde{\mu}_{\beta;(\origin,\kappa)}^\maybeprime(\halfspacesset(\Sph^{d-1},(-\infty,r))) ,
		\end{aligned}
	\end{equation}
	where we recall that $\halfspacesset(\cdot,\cdot)$ is defined by \eqref{e:hyperplaneUT}. 
	Recall from \eqref{e:densitymutilde} that the measure $\R \supset T \mapsto \widetilde{\mu}_{\beta;(\origin,\kappa)}^\maybeprime(\halfspacesset(\Sph^{d-1},T))$ has a density $\omega_d \,m_{\beta;(\origin,h)}^\maybeprime(t)$, for which \Cref{i:densityEstimates} of \cref{density-on-half-spaces} gives estimates for $t\geq 0$.
	Using them we get
	\begin{align*}
		&\widetilde{\mu}_{\beta;(\origin,\kappa)}^\maybeprime(\halfspacesset(\Sph^{d-1},(0,r)))
		\\&=
		\begin{cases}
			\Theta \left( \int_0^r (1+t^2)^{\beta+\frac{d}{2}+\frac{1}{2}} \dint t \right)
			=\Theta \left( \int_0^r \max(1 , t)^{2\beta+d+1} \dint t \right)
			= \Theta \left( \min(1,r) + r^{2\beta+d+2} \right)
			& \kappa=1 ,
			\\
			\Theta \left( \int_0^r (1-t^2)^{-\beta+\frac{1}{2}} \dint t \right)
			= \Theta \left( \int_0^r (1-t)^{-\beta+\frac{1}{2}} \dint t \right)
			= \Theta \left( (1-r)^{-\beta+\frac{3}{2}} -1 \right)
			& \kappa=-1 .		
		\end{cases}
	\end{align*}
	To get the estimates of $\Phi_\beta^\maybeprime \left((\origin,\kappa),r\overline{\B^d}\right) $ we only need to add to the above the constant
	\[ 
	C^\maybeprime 
	\coloneqq \widetilde{\mu}_{\beta;(\origin,\kappa)}^\maybeprime(\halfspacesset(\Sph^{d-1},(-\infty,0)))
	= \Phi_\beta^\maybeprime \left((\origin,\kappa),\{\origin\}\right)>0,\] 
	where the strict inequality is due to \cref{phi-point}.
	Therefore
	\begin{align*}
		\Phi_\beta^\maybeprime \left((\origin,\kappa),r\overline{\B^d}\right) 
		&=
		\begin{cases}
			C + \Theta \left( \min(1,r) + r^{2\beta+d+1} \right)
			= \Theta(1+r^{2\beta+d+2})
			& \kappa=1 ,
			\\
			C' + \Theta \left( (1-r)^{-\beta+\frac{3}{2}} -1 \right)
			= \Theta \left((1-r)^{-\beta+\frac{3}{2}}\right)
			& \kappa=-1 .		
		\end{cases}
	\end{align*}
\end{proof}

\begin{rem}
	\label{phi-comparable}
	In fact, for the $\beta$-Voronoi model a stronger statement can be made. For $\widehat{\convexbody} = ((\origin,h),\convexbody)\in \widehat{\K}_\origin$,
	$$\Phi_\beta(\widehat{\convexbody}) = \Theta\left( (h + \max_{v\in \convexbody} \norm{v}^2)^{\beta + \frac{d}{2} + 1} \right).$$	
	This can be seen from the fact that $\{\argmax_{v\in \convexbody} \norm{v}\} \subset \convexbody \subset (\max_{v\in \convexbody} \norm{v}) \overline{\B^d}$ using \cref{phi-point} and \cref{content-of-a-ball} with \cref{scaling-Phi}.
\end{rem}
	
Now we can move to the bounds.

\begin{theorem}[Lower bound for the $\beta$-Voronoi model]
	\label{lower-bound-beta}
	Let $\kappa = 1$ and $\beta > -1$. Then there exists a constant $c>0$ such that for all integer $n\ge d+1$,
	$$\P{\smash{\typcell}\in\cP_n} > (c n^{-\frac{2}{d-1}})^n.$$
\end{theorem}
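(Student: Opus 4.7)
The plan is to apply \cref{shape} with a carefully chosen set $S$ of almost-regular shapes, so that both the $\Phi$-content of elements of $S$ and the half-space measure of $S$ admit clean estimates. The natural candidate, suggested by the auxiliary lemmas above, is
\[
S \coloneqq \left\{ P = \bigcap_{i=1}^n \halfspace_i \,\middle|\, \halfspace_i \in \halfspacesset_i \text{ for all } i \in [n] \right\} .
\]
By \cref{nregular} every element of $S$ belongs to $\cP_n$, and by \cref{in-a-ball} there exists a constant $R$ independent of $n$ with $P \subset R\, \overline{\B^d}$ for every $P \in S$. Combined with monotonicity of the flower in $P$ (\cref{flower-paraboloids}) and \cref{content-of-a-ball}, this yields a uniform upper bound $\Phi_\beta((\origin,1), P) \le C$ on $S$.

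Next I would evaluate $\widetilde{\mu}_{\beta;(\origin,1);n}(S)$. The key observation is that the sets $\halfspacesset_1, \ldots, \halfspacesset_n$ are pairwise disjoint in $\halfspaces$ because the caps $S_{u_i}(\rho_n)$ are disjoint (a consequence of $\delta_n > 8\rho_n$). Hence a tuple $(\halfspace_1, \ldots, \halfspace_n) \in \halfspaces^n$ intersects to an element of $S$ exactly when there is a unique permutation $\pi \in \mathfrak{S}_n$ with $\halfspace_i \in \halfspacesset_{\pi(i)}$ for every $i$. Summing over the $n!$ permutations and using that all $\widetilde{\mu}_{\beta;(\origin,1)}(\halfspacesset_i)$ are equal by rotational symmetry, the $1/n!$ in the definition \eqref{eq:polytope_measure} of $\widetilde{\mu}_{\beta;(\origin,1);n}$ cancels exactly, giving
\[
\widetilde{\mu}_{\beta;(\origin,1);n}(S) = \prod_{i=1}^n \widetilde{\mu}_{\beta;(\origin,1)}(\halfspacesset_i) .
\]
Using the density formula \eqref{e:densitymutilde} together with \cref{i:density-1} of \cref{density-on-half-spaces} (noting $m_{\beta;(\origin,1)}(t) = \Theta(1)$ for $t$ close to $1$), each factor is of order $\sigma_{d-1}(S_{u_i}(\rho_n)) \cdot \rho_n^2 = \Theta(\rho_n^{d+1})$, and therefore $\widetilde{\mu}_{\beta;(\origin,1);n}(S) = \Theta(\rho_n^{n(d+1)})$.

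Plugging into \cref{shape} and using $\Phi_\beta^{-n-b} \ge C^{-n-b}$ on $S$ with $b \coloneqq \frac{2\beta+2}{2\beta+d+2}$, we obtain, up to a universal multiplicative constant,
\[
\P{\smash{\typcell} \in \cP_n} \ge \P{\smash{\typcell} \in \cP_n,\ \shape(\smash{\extypcell}) \in S} \ge c_1 \, \Gamma(n+b)\, C^{-n}\, \rho_n^{n(d+1)} .
\]
Recalling $\rho_n = \Theta(n^{-1/(d-1)})$ from \cref{delta} and using Stirling in the form $\Gamma(n+b) \asymp (n/e)^n\, n^{b-1/2}$, the identity $n - n(d+1)/(d-1) = -2n/(d-1)$ yields
\[
\Gamma(n+b)\, \rho_n^{n(d+1)} \asymp e^{-n}\, n^{-2n/(d-1)}\, n^{b-1/2} ,
\]
and absorbing the subexponential factor $n^{b-1/2}$ into the $n$-th power produces the stated bound $\P{\smash{\typcell} \in \cP_n} \ge (c\, n^{-2/(d-1)})^n$ for a suitable $c > 0$.

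The main technical subtlety, and the step to watch most carefully, is the cancellation of the $n!$ in the second paragraph: retaining the $1/n!$ from \eqref{eq:polytope_measure} without accounting for the $n!$ that arises from summing over permutations of the disjoint caps would weaken the bound by a factor of $n^{-n}$ via Stirling, producing only the strictly inferior order $n^{-n(d+1)/(d-1)}$ and failing to match the super-exponential order $n^{-2n/(d-1)}$ expected from the dual upper bound.
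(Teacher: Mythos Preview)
Your proposal is correct and follows essentially the same approach as the paper: the same shape set $S$, the same application of \cref{shape}, the same uniform $\Phi$-content bound via \cref{in-a-ball}, and the same computation $\widetilde{\mu}_{\beta;(\origin,1)}(\halfspacesset_i)=\Theta(\delta_n^{d+1})$ followed by Stirling. You are in fact more explicit than the paper about the $n!$ cancellation arising from the disjointness of the $\halfspacesset_i$ and the resulting sum over permutations; the paper simply writes that the integral in \cref{shape} ``simplifies to'' an integral over $\halfspacesset_1\times\cdots\times\halfspacesset_n$ without the $1/n!$, leaving that step implicit.
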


\begin{proof}
	Consider the set of shapes
	$S = \{P = \bigcap_{i=1}^n \halfspace_i \mid \halfspace_i\in \halfspacesset_i,\ i\in[n]\}.$
	By \Cref{nregular}, $S\subset \cP_n$.
	By the definition of $\widetilde{\mu}_{\beta;(\origin,1);n}$ (see \eqref{eq:polytope_measure}) the expression for $\P{\smash{\typcell}\in\cP_n,\ \shape(\smash{\extypcell})\in S}$ in \cref{shape} simplifies to
	$$
	c_1
	\Gamma\left(n + \frac{2 \beta+2}{2 \beta+d+2}\right)
	\int_{\halfspacesset_1 \times \cdots \times \halfspacesset_n}
	\Phi_\beta^\maybeprime((\origin,1),\bigcap_{i=1}^n \halfspace_i)^{-n-\frac{2 \beta+2}{2 \beta+d+2}}
	\widetilde{\mu}_{\beta;(\origin,1)}^{\otimes n} (\dint \halfspace_1, \ldots, \dint \halfspace_n).
	$$
	for some $c_1>0$.
	Recall that $\Gamma\left(n + \frac{2 \beta+2}{2 \beta+d+2}\right)=\left(\Omega(n)\right)^n$.
	Moreover, when $n$ is sufficiently large, for any $P\in S$, by \cref{in-a-ball}, $P\subset \B_\origin^d(2)$ making $\Phi_\beta\left((\origin,1),P\right)$ bounded by the constant $\Phi_\beta\left((\origin,1),\B_\origin^d(2)\right)$.
	Thus,
	$$\P{\smash{\typcell}\in\cP_n}
	\ge
	\P{\shape(\smash{\extypcell})\in S}
	>
	(c_2\,n\,\widetilde{\mu}_{\beta;(\origin,1)}(\halfspacesset_1))^n
	$$
	for some $c_2>0$. 
	By \eqref{e:densitymutilde} of \cref{density-on-half-spaces} and the definition \eqref{e:defBigH} of $\halfspacesset_1$, we have
	\[
		\widetilde{\mu}_{\beta;(\origin,1)}(\halfspacesset_1)
		= \sigma_{d-1}(S_{u_1}(\rho_n)) \int_{1-\rho_n^2}^1 m_{\beta;(\origin,1)}(t) \dint t
		= \Theta\left(\rho_n^{d-1} \int_{1-\rho_n^2}^1 (1+t^2)^{\beta+\frac{d}{2}+\frac{1}{2}} \dint t\right)
		= \Theta( \rho_n^{d+1} )
		= \Theta( n^{-\frac{d+1}{d-1}} ) ,
	\]
	where the first approximation is due to \cref{i:density-1} of \cref{density-on-half-spaces} and the last approximation is due to the order relation $\rho_n = \Theta(n^{-\frac{1}{d-1}})$ in \eqref{eq:delta-rho-order}.
	The bound follows.
	Note that it is enough to show this only for sufficiently large $n$, as then we can adjust the constant $c$ so that the same inequality holds for all values of $n$ where this probability is non-zero.
\end{proof}

\begin{theorem}[Lower bound for the $\beta'$-Voronoi model]
	\label{lower-bound-beta-prime}
	Let $\kappa = -1$, $\beta > \frac{d}{2} + 1$. Then there exists a constant $c>0$ such that for all integer $n \ge d+1$,
	$$\P{\smash{\typcellprime}\in\cP_n} > c^{n}.$$
\end{theorem}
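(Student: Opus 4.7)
The strategy mirrors that of \cref{lower-bound-beta}: apply \cref{shape} to a conveniently chosen set of shapes $S$, then estimate the resulting integral. The new difficulty is that elements of $\cP_n^\prime$ must be contained in $\B^d$, whereas the almost regular polytopes of \cref{nregular} sit in a slightly larger ball. To repair this, introduce the shrinking factor $c_0 \coloneqq 1 - 3\delta_n^2$ and set
\[
S \coloneqq \left\{\bigcap_{i=1}^n \halfspace_i \;\middle|\; \halfspace_i \in c_0\, \halfspacesset_i,\ i\in[n]\right\},
\]
where $c_0\, \halfspacesset_i \coloneqq \halfspacesset(S_{u_i}(\rho_n), (c_0(1-\rho_n^2), c_0))$. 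Direct adaptations of \cref{in-a-ball} (applied with $r = c_0$ and $R = 1$, noting that $c_0 < 1 - 2\delta_n^2$) and \cref{nregular} show that, for $n$ large, every $P \in S$ is a polytope with exactly $n$ facets contained in $\B^d$, and hence $S \subset \cP_n^\prime$.

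Applying \cref{shape} with $\kappa = -1$ and $\alpha \coloneqq (\beta-1)/(\beta - d/2 - 1)$ yields
\[
\P{\smash{\typcellprime} \in \cP_n} \ge C\, \Gamma(n+\alpha) \int_S \Phi_\beta^\prime((\origin,-1),P)^{-n-\alpha}\, \widetilde{\mu}_{\beta;(\origin,-1);n}^\prime(\dint P)
\]
for some $C > 0$. Each $P \in S$ is contained in a ball of radius $R$ with $1-R = \Theta(\delta_n^2)$, so by \cref{content-of-a-ball}, $\Phi_\beta^\prime((\origin,-1),P) \le C_1\, \delta_n^{-2\beta+3}$. Because the sets $c_0\,\halfspacesset_i$ sit over disjoint spherical caps, $\widetilde{\mu}_{\beta;(\origin,-1);n}^\prime(S) = \prod_{i=1}^n \widetilde{\mu}_{\beta;(\origin,-1)}^\prime(c_0\,\halfspacesset_i)$. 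By part \cref{i:density-2} of \cref{density-on-half-spaces}, $m_{\beta;(\origin,-1)}^\prime(t) = \Theta((1-t^2)^{-\beta+1/2}) = \Theta(\delta_n^{-2\beta+1})$ throughout the interval $(c_0(1-\rho_n^2), c_0)$ of length $\Theta(\delta_n^2)$, and the angular factor is $\sigma_{d-1}(S_{u_i}(\rho_n)) = \Theta(\delta_n^{d-1})$; multiplying gives $\widetilde{\mu}_{\beta;(\origin,-1)}^\prime(c_0\,\halfspacesset_i) = \Theta(\delta_n^{d-2\beta+2})$.

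Combining these estimates and using Stirling's formula $\Gamma(n+\alpha) = \Theta(n^n e^{-n})$ up to polynomial factors, the lower bound takes the form
\[
c_2^n\, n^n\, \delta_n^{(2\beta-3)(n+\alpha) + n(d-2\beta+2)}
\]
times a polynomial in $n$. The exponent of $\delta_n$ simplifies to $n(d-1) + (2\beta-3)\alpha$, and by \cref{delta}, $\delta_n^{n(d-1)} = \Theta(n^{-n})$. Hence the factor $n^n\, \delta_n^{n(d-1)}$ is of constant order, and the whole expression is bounded below by $c^n$ for some $c > 0$ after absorbing polynomial factors into a slightly smaller $c$.

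\textbf{Expected obstacle.} The delicate point is the choice of $c_0$: it must be close enough to $1$ to push $t$ into the divergent regime of $m_{\beta;(\origin,-1)}^\prime$ — this is precisely what converts the super-exponential bound of the $\beta$-case into a merely exponential one — while still keeping every $P \in S$ strictly inside $\B^d$. The scale $c_0 = 1 - \Theta(\delta_n^2)$ balances these two requirements and makes the powers of $\delta_n$ cancel exactly. The routine but technical part is verifying the scaled variants of \cref{in-a-ball} and \cref{nregular}.
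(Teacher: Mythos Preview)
Your proposal is correct and follows essentially the same approach as the paper: the paper takes $r_n = 1 - 4\delta_n^2$ and $R_n = 1 - \delta_n^2$ where you take $c_0 = 1 - 3\delta_n^2$, but both arguments apply \cref{shape} to scaled almost-regular polytopes fitting strictly inside $\B^d$, bound the $\Phi$-content via \cref{content-of-a-ball}, estimate $\widetilde{\mu}^\prime_{\beta;(\origin,-1)}(c_0\halfspacesset_i)$ via \cref{density-on-half-spaces}, and observe that the resulting powers of $\delta_n$ collapse to $\delta_n^{n(d-1)}$, which cancels the $n^n$ from Stirling up to a factor $c^n$. (A tiny quibble: ``$n^n\delta_n^{n(d-1)}$ is of constant order'' should read ``of order $c^n$'', but this is exactly what you use in the next clause.)
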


\begin{proof}
	Set $r_n = 1 - 4 \delta_n^2$ and $R_n = 1 - \delta_n^2$. We assume that $n$ is large enough so that both are positive.
	Define a set of shapes
	$S^\prime = \{P = \bigcap_{i=1}^n \halfspace_i \mid \halfspace_i\in r_n\halfspacesset_i,\ i\in[n]\}.$
	By \cref{in-a-ball}, $P\subset \B_\origin^d(R_n)$ for any $P\in S^\prime$.
	Combined with \Cref{nregular} we therefore have $S^\prime\subset \cP_n^\prime$, where we recall that $\cP_n^\prime = \{P\in\cP_n \mid P \subset \B^d\}$ as defined at the beginning of \Cref{s:homeo_and_CT}.
	By applying \cref{shape} and using the same logic as in the proof of \cref{lower-bound-beta}, we get
	$$
	\P{\smash{\typcellprime}\in\cP_n}
	\ge
	\P{\shape(\smash{\extypcellprime})\in S^\prime}
	>
	\left(c_1\,n\,\frac{\widetilde{\mu}_{\beta;(\origin,-1)}^\prime(r_n\halfspacesset_1)}{\Phi_\beta^\prime\left((\origin,-1),R_n \overline{\B^d}\right)^{1+O(\frac{1}{n})}}\right)^n ,
	$$
	for some $c_1>0$.
	By \cref{density-on-half-spaces}, \cref{content-of-a-ball} and \cref{lem:delta-rho}, there exist $c_2, c_3, c_4>0$, such that
	$$
	\frac{\widetilde{\mu}_{\beta;(\origin,-1)}^\prime(r_n\halfspacesset_1)}{\Phi_\beta^\prime\left((\origin,-1),R_n \overline{\B^d}\right)^{1+O(\frac{1}{n})}}
	> c_2 \frac{\delta_n^{d+1 + 2(-\beta + \frac12) }}{\delta_n^{2(-\beta + \frac32)+O(\frac{1}{n})}}
	= c_2 \delta_n^{d-1+O(\frac{1}{n})}
	> c_3 n^{-1+O(\frac{1}{n})}
	> c_4 n^{-1}.
	$$
	The desired bound follows immediately with $c=c_1 c_4$.
	Again, we can adjust the constant $c$ so that the inequality holds for all values of $n$ where this probability is non-zero.
\end{proof}

\section[Upper bound on the distribution of typical number of facets: β-Voronoi]{Upper bound on $\P{\smash{\typcell}\in\cP_n}$: $\beta$-Voronoi}
\label{sec:upper}

In this section we restrict ourselves to the $\beta$-Voronoi setting, i.e.\ $\kappa=1$, $\beta>-1$. 
We prove an upper bound on the tail distribution of the facet number of a typical cell.
The overall strategy follows closely the approach developed in \cite{bonnet_cells_2018} for Poisson hyperplane tessellations.
More precisely, the proof proceeds in three steps:
(i) show that a generic $n$-facet polytope admits many facets whose removal only slightly perturbs its size functional,
(ii) use a permutation argument to reduce the analysis to configurations where the last facet is one of these “good” facets,
and (iii) integrate out the last half-space to obtain a recursive bound.

While this scheme is the same, its implementation in the $\beta$-setting requires several non-trivial modifications.
In particular, the role of the classical $\Phi$-content is played here by $\Phi_\beta((\origin,h),P)$,
whose homogeneity and dependence on the height variable $h$ necessitate new estimates.
This affects both the geometric step (control of the variation of $\Phi_\beta$ under facet removal)
and the analytic step (integration with respect to the induced half-space measure $\mu_{\beta;(\origin,h)}$).
The lemmas below follow the structure of \cite{bonnet_cells_2018}, but incorporate these adaptations.

For ease of writing in this section we will denote by $$\alpha \coloneqq 2 \beta + d + 2$$ the order of homogeneity of $\mu_\beta$ (see \cref{improp}).
The main result of this section is the following theorem.

\begin{theorem}[Upper bound for the $\beta$-Voronoi model]
	\label{upper-bound-beta}
	There exists a positive constant $C$ such that
	\[
	\P{\smash{\typcell}\in\cP_n} 
	\leq C n^{-\frac{2}{d-1}} \P{\smash{\typcell}\in\cP_{n-1}} 
	\]	
	for all $n\geq d+1$.
	In particular there exists a positive constant $C$ such that
	\[
	\P{\smash{\typcell}\in\cP_n} 
	\leq (C n^{-\frac{2}{d-1}})^{n} 
	\]	
	for all $n\geq d+1$.
\end{theorem}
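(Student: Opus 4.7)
The plan is to adapt the facet-removal technique of Bonnet, Calka and Reitzner \cite{bonnet_cells_2018} to the paraboloidal geometry of the $\beta$-Voronoi model. By \cref{shape} with $S = \cP_n$, together with the product form \eqref{eq:polytope_measure} of $\widetilde{\mu}_{\beta;(\origin,1);n}$, one can write
\[
    \P{\smash{\typcell}\in\cP_n} \;=\; D\,\frac{\Gamma(n+a)}{n!}\,J_n, \qquad J_n \coloneqq \int_{(\halfspaces)^n} \1{\bigcap_{i=1}^n \halfspace_i \in \cP_n}\,\Phi_\beta\Big((\origin,1),\bigcap_{i=1}^n \halfspace_i\Big)^{-n-a}\, \widetilde{\mu}_{\beta;(\origin,1)}^{\otimes n}(\dint \halfspace_{1:n}),
\]
where $a = (2\beta+2)/(2\beta+d+2)$ and $D>0$ is a constant independent of $n$. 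Since $\Gamma(n+a)/\Gamma(n-1+a) = n+a-1 = \Theta(n)$, the target recursion $\P{\smash{\typcell}\in\cP_n} \le C n^{-2/(d-1)}\,\P{\smash{\typcell}\in\cP_{n-1}}$ reduces to establishing
\[
    J_n \;\le\; C\, n^{-2/(d-1)}\, J_{n-1}.
\]

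The plan for this step is a controlled facet-removal comparison. By Fubini and the symmetry of the integrand in $J_n$, one integrates out one of the $n$ half-spaces, say $\halfspace_n$, and compares the resulting iterated integral with $J_{n-1}$. For a typical $(\halfspace_1, \ldots, \halfspace_{n-1})$ giving $P' \coloneqq \bigcap_{i<n} \halfspace_i \in \cP_{n-1}$, the homogeneity properties of $\Phi_\beta$ and $\widetilde{\mu}$ (\cref{scaling-Phi,scaling-tilde}) allow one to normalize $\Phi_\beta((\origin,1), P') = 1$; \cref{phi-comparable} and \cref{content-of-a-ball} then confine $P'$ to a ball of bounded radius, turning the remaining inner integral over $\halfspace_n$ into a geometric quantity depending only on the shape of $P'$. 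Parametrizing $\halfspace_n = H(u,t)$ via the density formula of \cref{density-on-half-spaces}, the plan is to split the integration over $\halfspace_n$ according to the size of the cap cut off from $P'$: shallow cuts carve off a cap of small $\Phi$-mass (so the integrand $\Phi(P' \cap \halfspace_n)^{-n-a}$ is of order one, and the admissible measure scales like the angular size of a single facet of $P'$, which by the $\delta$-net estimate of \cref{delta} is of order $n^{-1/(d-1)}$ on $\Sph^{d-1}$), while deep cuts remove a substantial portion of $P'$ (so the integrand grows, but the admissible $\widetilde{\mu}$-measure shrinks by a matching amount through a sharp geometric comparison).

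The main obstacle is this last sharp geometric comparison — the paraboloidal analog of the key convex-geometric lemma of \cite[Section~4]{bonnet_cells_2018}, in which paraboloidal arcs (\cref{fig:pi}) replace Euclidean line segments and the weighted density from \cref{density-on-half-spaces} replaces the uniform hyperplane measure. The estimate must quantify the joint dependence of the $\Phi$-drop $\Phi(P') - \Phi(P' \cap \halfspace_n)$ and the $\widetilde{\mu}$-measure of admissible cutting half-spaces producing it, in such a way that integrating $(\Phi(P' \cap \halfspace_n)/\Phi(P'))^{-n-a}$ against $\widetilde{\mu}_{\beta;(\origin,1)}$ yields $O(n^{-2/(d-1)}) \cdot \Phi(P')^{-1}$ on average, absorbing the missing $\Phi(P')^{-1}$ into the comparison with the $\Phi(P')^{-(n-1)-a}$ weight that appears in $J_{n-1}$. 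Once this one-step recursion is established, iterating down to the base case $n = d+1$ (where $\P{\smash{\typcell} \in \cP_{d+1}} \le 1$ trivially) and absorbing the factor $(n!/(d+1)!)^{-2/(d-1)} \sim (e/n)^{2n/(d-1)}$ via Stirling's formula gives the explicit bound $\P{\smash{\typcell}\in\cP_n} \le (C'\,n^{-2/(d-1)})^n$ stated in the second part of the theorem.
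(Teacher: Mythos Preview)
Your reduction via \cref{shape} to showing $J_n \le C\,n^{-2/(d-1)} J_{n-1}$ is correct, but the sketch does not close: you flag the ``main obstacle'' (the sharp geometric comparison for deep cuts) and leave it unresolved. Two concrete gaps block the plan as written. First, after Fubini you assume $P'=\bigcap_{i<n}\halfspace_i\in\cP_{n-1}$, yet the constraint $\bigcap_{i=1}^n\halfspace_i\in\cP_n$ appearing in $J_n$ does not force $P'$ to be bounded (remove any facet of a simplex). Second, your shallow-cut heuristic appeals to \cref{delta} for the facet angular scale $n^{-1/(d-1)}$, but that estimate is specific to nearly regular configurations; for an arbitrary $P'\in\cP_{n-1}$ the facets can be wildly inhomogeneous, and there is no uniform-in-$P'$ bound of the form $\int \1{P'\cap \halfspace_n\in\cP_n}\,\Phi(P'\cap \halfspace_n)^{-n-a}\,\widetilde{\mu}(\dint \halfspace_n)=O(n^{-2/(d-1)})\,\Phi(P')^{-(n-1)-a}$ available without precisely the geometric input you defer.

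The paper resolves this not by integrating over all cuts but by \emph{choosing} which facet to integrate out. The Reisner--Sch\"utt--Werner approximation (\cref{lem:RSWddependent,lem:RSW2}) combined with a normal-cone counting argument yields \cref{lem:RSW3}: every simple $P\in\cP_n$ admits at least $n/4$ indices $j$ for which $P_{[n]\setminus\{j\}}$ is bounded, $\distH(P,P_{[n]\setminus\{j\}})<C\,\Phi(P)^{1/\alpha}\,n^{-2/(d-1)}$, and $\Phi(P_{[n]\setminus\{j\}})<\exp\bigl(O(n^{-(d+1)/(d-1)})\bigr)\,\Phi(P)$. A symmetrization device (\cref{lem:TechnicalPermutation}) then restricts the $J_n$-integral, at the cost of a factor $4$, to tuples in which $\halfspace_n$ is such a removable facet; on that set deep cuts never occur, and the elementary \cref{lem:0IntTakeOutFacets} converts the Hausdorff-distance constraint directly into the factor $n^{-2/(d-1)}$, while the $\Phi$-ratio contributes only $\exp\bigl(O(n\cdot n^{-(d+1)/(d-1)})\bigr)=O(1)$. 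This RSW-based existence of many good removable facets is exactly the missing ingredient that replaces your unresolved ``sharp geometric comparison''.
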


Note that the second part of the theorem follows directly from iterating the bound of the first part and the fact that $n! \geq (n/e)^n $. The rest of the section is dedicated to prove the first part of the theorem.

We start by preparing some tools. Let $h\ge 0$. For $\widehat{\convexbody} = ((\origin,h),\convexbody) \in \widehat{\K}_\origin$ denote by 
\begin{equation}
	\label{e:R(K)}
	R(\widehat{\convexbody}) = R((\origin,h),\convexbody)\coloneqq \sqrt{h + \max_{v\in \convexbody}\norm{v}^2},
	\quad \text{ and } \quad
	R(\convexbody) \coloneqq R((\origin,0),\convexbody) = \max_{v\in \convexbody}\norm{v}.
\end{equation}
The last quantity is simply the radius of the smallest ball centered at the origin and containing $\convexbody$.
Clearly $R((\origin,h),\convexbody)\ge R(\convexbody)$ for all $h\in\R$, $\convexbody\in\K$.
By \cref{phi-comparable}, there is a constant
$c_\Phi 
> 0$ such that
\begin{equation}
	\label{cphi}
	R(\widehat{\convexbody}) \le c_\Phi \Phi_\beta(\widehat{\convexbody})^\frac{1}{\alpha}
\end{equation}
for all $\widehat{\convexbody} \in \widehat{\K}_\origin$.
At the same time, by \cref{density-on-half-spaces}, there exists a constant $c_m>0$ such that, for any $h>0$ and any $t\in\R$,
\begin{equation}
	\label{cm}
	m_{\beta;(\origin,h)} (t) \le c_m (h+t^2)^\frac{\alpha - 1}{2}.
\end{equation}
The following lemma bounds the possible variation in $\Phi$-content for a localized variation of the body. This will be especially useful when we compare polytopes sharing a large proportion of their facets, i.e.\ of the form $\cap_{i\in I} \halfspace_i$ and $\cap_{j\in J}^n \halfspace_j$ with $I\cap J$ being a significant subset of both $I$ and $J$.
To measure the local variation between two convex bodies, we will use the difference between their support functions $\support(\cdot,\cdot)$, defined in \eqref{e:support}.
\begin{lemma}
	\label{lem:contPhi}
	Let
	$ U \subset \Sph^{d-1} $,
	$ \convexbody \subset \biggerconvexbody \in \mathbb{K} $,
	$h>0$
	be such that,
	for any
	$u\in\Sph^{d-1}$,
	\begin{equation}
		\label{eq:localvariation}
		0
		\leq \support(\biggerconvexbody,u) - \support(\convexbody,u) 
		\leq \1{ u \in U } \delta ,
	\end{equation} 
	where
	$\delta =
	\delta' c_\Phi \Phi_\beta((\origin,h),\convexbody)^{\frac{1}{\alpha}}
	> 0
	$, with $c_\Phi$ as in \eqref{cphi}.
	Then
	\[
	\Phi_\beta((\origin,h),\biggerconvexbody) - \Phi_\beta((\origin,h),\convexbody)
	\leq
	c_m c_\Phi^\alpha
	\delta'
	(1 + \delta')^{\alpha - 1}
	\Phi_\beta((\origin,h),\convexbody)
	\sigma_{d-1} (U),
	\]
	with $c_m$ as in \eqref{cm}.
\end{lemma}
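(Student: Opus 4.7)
The plan is to express the $\Phi$-content difference via the polar-like integral representation from \cref{phi-in-polar}, then exploit that the two supports $\support(K,\cdot)$ and $\support(L,\cdot)$ disagree only on $U$ and by at most $\delta$, and finally replace the density $m_{\beta;(\origin,h)}$ and the relevant radii by the explicit bounds \eqref{cphi} and \eqref{cm}.

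First, I would write, using \cref{phi-in-polar} together with $K\subset L$,
\[
\Phi_\beta((\origin,h),L) - \Phi_\beta((\origin,h),K)
= \int_{\Sph^{d-1}} \int_{\support(K,u)}^{\support(L,u)} m_{\beta;(\origin,h)}(t) \dint t \ \sigma_{d-1}(\dint u).
\]
Hypothesis \eqref{eq:localvariation} immediately kills the integrand off $U$ and bounds the length of the inner interval by $\delta$ for $u\in U$, so it suffices to control $m_{\beta;(\origin,h)}(t)$ uniformly on the range $[\support(K,u),\support(L,u)]$.

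Next I would combine \eqref{cm} with a uniform bound on $h+t^2$. From the definition \eqref{e:R(K)} of $R$ we have $|\support(K,u)|\le R(\widehat{\convexbody})$ and $h\le R(\widehat{\convexbody})^2$, so for any $t$ in the integration range $|t|\le R(\widehat{\convexbody})+\delta$. Applying \eqref{cphi} together with the very definition of $\delta=\delta' c_\Phi\Phi_\beta(\widehat{\convexbody})^{1/\alpha}$ yields $R(\widehat{\convexbody})+\delta\le c_\Phi\Phi_\beta(\widehat{\convexbody})^{1/\alpha}(1+\delta')$, so that
\[
h+t^2 \ \le\ c_\Phi^2\Phi_\beta(\widehat{\convexbody})^{2/\alpha}(1+\delta')^2,
\]
and hence $m_{\beta;(\origin,h)}(t) \le c_m c_\Phi^{\alpha-1}\Phi_\beta(\widehat{\convexbody})^{(\alpha-1)/\alpha}(1+\delta')^{\alpha-1}$ on the relevant range (absorbing a fixed numerical constant into $c_m$).

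Plugging this uniform estimate back into the double integral produces
\[
\Phi_\beta((\origin,h),L)-\Phi_\beta((\origin,h),K)
\ \le\ \sigma_{d-1}(U)\cdot \delta \cdot c_m c_\Phi^{\alpha-1}\Phi_\beta(\widehat{\convexbody})^{(\alpha-1)/\alpha}(1+\delta')^{\alpha-1},
\]
and substituting $\delta=\delta' c_\Phi\Phi_\beta(\widehat{\convexbody})^{1/\alpha}$ collapses the exponents $\frac{1}{\alpha}+\frac{\alpha-1}{\alpha}=1$ on $\Phi_\beta(\widehat{\convexbody})$, giving exactly the claimed bound $c_m c_\Phi^\alpha \delta'(1+\delta')^{\alpha-1}\Phi_\beta(\widehat{\convexbody})\sigma_{d-1}(U)$. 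The only delicate point is the uniform control of $h+t^2$: one must resist bounding $h$ and $t^2$ in isolation (which would introduce an irrelevant additive term on the wrong scale) and instead package them jointly via the circumradius estimate $R(\widehat{\convexbody})\le c_\Phi \Phi_\beta(\widehat{\convexbody})^{1/\alpha}$, which is precisely where the coupling of $\delta$ to $\Phi_\beta(\widehat{\convexbody})^{1/\alpha}$ in the hypothesis pays off.
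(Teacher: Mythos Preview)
Your approach is the same as the paper's: write the difference via \cref{phi-in-polar}, restrict to $U$, bound the inner interval by $\delta$, and control the density through \eqref{cm} and the circumradius bound \eqref{cphi}.

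There is one slip in the execution. From your intermediate bounds $h\le R(\widehat K)^2$ and $|t|\le R(\widehat K)+\delta$ one only obtains
\[
h+t^2\;\le\; R(\widehat K)^2+(R(\widehat K)+\delta)^2,
\]
which is off by a factor of roughly $2$ from the $(R(\widehat K)+\delta)^2$ implicit in your asserted bound $h+t^2\le c_\Phi^2\Phi_\beta(\widehat K)^{2/\alpha}(1+\delta')^2$. After raising to the power $(\alpha-1)/2$ this leaves a spurious factor $2^{(\alpha-1)/2}$ that cannot honestly be ``absorbed into $c_m$'', since $c_m$ is the fixed constant of \eqref{cm} that appears explicitly in the statement. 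The paper gets the sharp constant by bounding the \emph{combination} $h+\support(K,u)^2$ rather than $h$ alone: from the definition \eqref{e:R(K)} one has $h+\support(K,u)^2\le R(\widehat K)^2$, and then for $t\le \support(K,u)+\delta$,
\[
h+t^2\;\le\; h+(\support(K,u)+\delta)^2
\;=\;(h+\support(K,u)^2)+2\delta\,\support(K,u)+\delta^2
\;\le\;\bigl(\sqrt{h+\support(K,u)^2}+\delta\bigr)^2
\;\le\;(R(\widehat K)+\delta)^2,
\]
yielding exactly $(1+\delta')^2 c_\Phi^2\Phi_\beta(\widehat K)^{2/\alpha}$ with no extra constant. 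Ironically this is precisely the ``package $h$ and $t^2$ jointly'' step you recommend in your closing paragraph but did not carry out.
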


\begin{proof}
	By \cref{phi-in-polar},
	\begin{align*}
		\Phi_\beta((\origin,h),\biggerconvexbody) - \Phi_\beta((\origin,h),\convexbody) 
		&=
		 \int_{\Sph^{d-1}}
			\int_{\support(\convexbody,u)}^{\support(\biggerconvexbody,u)}
			m_{\beta;(\origin,h)}(t)
			\dint t \sigma_{d-1}(\dint u)
		\\
		&\leq  
		\int_U
		(\support(\biggerconvexbody,u) - \support(\convexbody,u))
		c_m
		\left(h + \support(\biggerconvexbody,u)^2\right)^\frac{\alpha - 1}{2}
		\sigma_{d-1}(\dint u) ,
	\end{align*}  
	where the inequality follows from \eqref{cm} and \eqref{eq:localvariation}.	
	Let us bound uniformly $h + \support(\biggerconvexbody,u)^2$ for $u\in U$.
	By the hypothesis,
	$$h + \support(\biggerconvexbody,u)^2 \le h + (\support(\convexbody,u)+\delta)^2 =  h + \support(\convexbody,u)^2 + 2\delta \support(\convexbody,u) + \delta^2,$$
	Recall that, by \eqref{e:R(K)} and \eqref{cphi}, we have $\support(\convexbody,u) \le \sqrt{h + \support(\convexbody,u)^2} \le R((\origin,h),\convexbody) \le c_\Phi \Phi_\beta((\origin,h),\convexbody)^\frac{1}{\alpha}$. This translates to
	\begin{align*}
		h + \support(\biggerconvexbody,u)^2
		&\le h + \support(\convexbody,u)^2 + 2\delta \sqrt{h + \support(\convexbody,u)^2} + \delta^2
		\\&= \sqrt{h + \support(\convexbody,u)^2}^2 + 2\delta' c_\Phi \Phi_\beta((\origin,h),\convexbody)^{\frac{1}{\alpha}} \sqrt{h + \support(\convexbody,u)^2} + (\delta' c_\Phi \Phi_\beta((\origin,h),\convexbody)^{\frac{1}{\alpha}})^2
		\\&\le (1 + 2\delta' + \delta'^2) c_\Phi^2 \Phi_\beta((\origin,h),\convexbody)^{\frac{2}{\alpha}}
		\\&= (1 + \delta')^2 c_\Phi^2 \Phi_\beta((\origin,h),\convexbody)^{\frac{2}{\alpha}}.
	\end{align*}	
	And now we can plug it into the initial calculation to get
	\begin{align*}
		\Phi_\beta((\origin,h),\biggerconvexbody) - \Phi_\beta((\origin,h),\convexbody)
		&\le 
		\delta
		c_m 
		\left((1 + \delta')^2 c_\Phi^2 \Phi_\beta((\origin,h),\convexbody)^{\frac{2}{\alpha}}\right)^\frac{\alpha - 1}{2}
		\sigma_{d-1} (U)
		\\
		&=
		c_m c_\Phi^\alpha
		\delta'
		(1 + \delta')^{\alpha - 1}
		\Phi_\beta((\origin,h),\convexbody)
		\sigma_{d-1} (U).
	\end{align*}	
	That is precisely the bound we claimed.
\end{proof}

We will also use the following statement as is.

\begin{lemma}[{\cite[Corollary 2.8]{reisner_dropping_2001}}]
	\label{lem:RSWddependent}
	There exist constants $ N_1 $ and $ C_1 $, depending on $d$, such that the following holds.  
	For any integer $ n > N_1 $ and any $ \convexbody \in \mathbb{K} $, there exists a polytope $Q \supset \convexbody$ with $n$ facets such that
	\[
	\distH (\convexbody , Q) < C_1 R(\convexbody) n^{-\frac 2{d-1}} ,
	\]
	where $R(\convexbody)$ is defined in \eqref{e:R(K)}.
\end{lemma}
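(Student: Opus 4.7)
The plan is to establish the recursive inequality
\[
\P{\typcell\in\cP_n} \leq C\,n^{-2/(d-1)}\,\P{\typcell\in\cP_{n-1}},
\]
from which the super-exponential bound follows by iteration combined with $n!\geq (n/e)^n$. The strategy mirrors \cite{bonnet_cells_2018}, adapted to the parabolic setting.

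Starting from \Cref{shape} (with $\kappa=1$), and writing $\Phi_1(P)\coloneqq\Phi_\beta((\origin,1),P)$ and $a\coloneqq\frac{2\beta+2}{\alpha}$ for brevity, I would expand
\[
\P{\typcell\in\cP_n} = \frac{\gamma c_{d+1,\beta}\,\Gamma(n+a)}{\Lambda_\beta(\beta+d/2+1)\cdot n!}\int_{\halfspaces^n}\1{\bigcap_i \halfspace_i\in\cP_n}\,\Phi_1\Bigl(\bigcap_i \halfspace_i\Bigr)^{-n-a}\,\widetilde{\mu}_{\beta;(\origin,1)}^{\otimes n}(\dint \halfspace_{1:n}),
\]
and apply Fubini to integrate out the last halfspace. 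Since the prefactor ratio $\P{\typcell\in\cP_n}/\P{\typcell\in\cP_{n-1}}$ collapses to $(n+a-1)/n = \Theta(1)$ up to the integrals, the target recursion reduces to the pointwise estimate that for every $P'\in\cP_{n-1}$,
\[
(\star)\qquad \int_\halfspaces\1{P'\cap \halfspace\in\cP_n}\,\Phi_1(P'\cap \halfspace)^{-n-a}\,\widetilde{\mu}_{\beta;(\origin,1)}(\dint \halfspace) \leq C\,n^{-2/(d-1)}\,\Phi_1(P')^{-(n-1)-a}.
\]

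To prove $(\star)$, I would parameterize $\halfspace=\halfspace(u,t)$ using the density formula of \Cref{density-on-half-spaces} and split the integration by the distance $|\support(P',u)-t|$ from $\partial\halfspace$ to $\partial P'$ in direction $u$. In the \emph{near} regime where this distance is at most $\delta_n\coloneqq C_1 R(P')(n-1)^{-2/(d-1)}$, \Cref{lem:contPhi} combined with \Cref{phi-comparable} gives $\Phi_1(P'\cap \halfspace)=(1+O(n^{-2/(d-1)}))\Phi_1(P')$, and the density formula with $R(P')^\alpha = \Theta(\Phi_1(P'))$ bounds the $\widetilde{\mu}$-measure of this regime by $O(n^{-2/(d-1)}\Phi_1(P'))$; multiplying the two yields exactly the right-hand side of $(\star)$. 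The main obstacle will be the \emph{far} regime, where $\Phi_1(P'\cap \halfspace)$ can be significantly smaller than $\Phi_1(P')$ and the weight $\Phi_1^{-n-a}$ potentially blows up; my plan is a dyadic decomposition in $\rho\coloneqq\Phi_1(P'\cap\halfspace)/\Phi_1(P')$, where the hard constraint $P'\cap \halfspace\in\cP_n$ (which forbids $\halfspace$ from eliminating any facet of $P'$) combined with \Cref{lem:RSWddependent} applied to the cut polytope $P'\cap\halfspace$ with $n-1$ approximating facets should yield geometric decay of the measure at each dyadic level, making the total far contribution absorbed into the constant $C$. Summing near and far contributions, then iterating the recursion and applying Stirling's formula, completes the argument.
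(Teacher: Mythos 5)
You have proved the wrong statement. The target is \cref{lem:RSWddependent}, which is the polytopal approximation result of Reisner, Sch\"utt and Werner (cited as \cite[Corollary~2.8]{reisner_dropping_2001}): for any convex body $\convexbody$ and any $n>N_1$, there is a circumscribing polytope $Q\supset\convexbody$ with $n$ facets satisfying $\distH(\convexbody,Q) < C_1 R(\convexbody) n^{-2/(d-1)}$. This is a deterministic statement in convex geometry. The paper offers no proof of it at all --- it is quoted as a black-box lemma from the literature, which is standard practice for this classical approximation-order result.

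Your proposal instead attempts to establish the recursion $\P{\typcell\in\cP_n} \leq C\,n^{-2/(d-1)}\,\P{\typcell\in\cP_{n-1}}$, which is \cref{upper-bound-beta} (the probabilistic upper bound on the facet-number tail of the typical $\beta$-Voronoi cell). Notice that your own argument \emph{invokes} \cref{lem:RSWddependent} as an input in the ``far regime'', which already signals that you are reasoning at the wrong level: you cannot use the lemma you are asked to prove. If you had in fact been asked to prove \cref{upper-bound-beta}, your ``add a half-space'' reduction differs from the paper's ``delete a facet'' strategy (the paper uses \cref{lem:RSW3} and the symmetrization \cref{lem:TechnicalPermutation} to show that at least $n/4$ of the facets can be removed cheaply, then integrates out one of them via \cref{lem:0IntTakeOutFacets}); your far-regime dyadic decomposition is left as a sketch and is, in my view, the genuinely hard part --- the paper sidesteps exactly this difficulty by working with facet removal, where \cref{lem:RSW3} guarantees uniform control over the $\Phi$-content ratio. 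But all of this is moot, since it is not the statement you were asked to prove.
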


When $\convexbody$ is a polytope, the enveloping polytope $Q$ can be chosen to be $\convexbody$ with some facets deleted. Let us formalize this.
For $P = \bigcap_{i=1}^n \halfspace_i\in\cP_n$ and $I\subset [n]$, we set
\[ 
	P_I 
	:= \bigcap_{i\in I} \halfspace_i .
\]
Recall that a polytope is \textbf{simple} if each vertex is the intersection of exactly $d$ facets.
The following lemma, identical in spirit to \cite[Lemma~3.2]{bonnet_cells_2018},
allows us to remove a positive fraction of facets while keeping a good control of the polytope.
\begin{lemma}
	\label{lem:RSW2}
	There exist constants $ k_0 $ and $ C_2 >0$, depending on $d$ and $\beta$, such that the following holds.
	For any integer $ k > k_0 $ and any simple polytope $ P $ with $ n $ facets, there exists a subset $ I \subset [ n ] $ with $|I| \leq k $ such that for any $h\ge0$
	\[
	\distH ( P , P_I ) 
	< C_2 c_\Phi \Phi_\beta((\origin,h),P)^{\frac{1}{\alpha}} k^{ - \frac 2{d - 1} } ,
	\]
	where $c_\Phi$ is the constant from \eqref{cphi}.
\end{lemma}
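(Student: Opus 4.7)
My plan is to apply \cref{lem:RSWddependent} to $P$ (viewed as a convex body) with the reduced facet-count parameter $k' \coloneqq \lceil k/d \rceil$, obtaining a polytope $Q \supset P$ with $k'$ facets and $\distH(P,Q) < C_1 R(P) (k')^{-2/(d-1)} \le C R(P) k^{-2/(d-1)}$ for a dimensional constant $C$. To convert $Q$ into some $P_I$, I would associate to each facet $F$ of $Q$ with outer unit normal $u_F \in \Sph^{d-1}$ the supporting face of $P$ in direction $u_F$, namely $\{x \in P \mid \langle x,u_F\rangle = \support(P,u_F)\}$. Since $P$ is simple, this face lies in at most $d$ facets of $P$; let $I$ be the union of the indices of these facets over all facets $F$ of $Q$. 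Then $|I| \le d\,k' \le k$ (possibly after enlarging $k_0$).

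The central step is then to establish $\distH(P,P_I) \le C' \distH(P,Q)$ for a dimensional constant $C'$. I would argue this by bounding $\support(P_I,u) - \support(P,u) \ge 0$ uniformly over $u \in \Sph^{d-1}$. For any such $u$, pick the facet $F$ of $Q$ whose outer normal $u_F$ minimizes $\|u - u_F\|$; then the (at most $d$) facets of $P$ selected through the supporting face of $P$ in direction $u_F$, all of which lie in $I$, should force $\support(P_I,u)$ down to within an error of $O(\|u-u_F\| R(P) + \distH(P,Q))$ of $\support(P,u)$. The angular closeness $\|u-u_F\| = O((k')^{-1/(d-1)})$ would come from the quasi-uniform distribution of $Q$'s facet normals on $\Sph^{d-1}$, a feature one extracts from the construction underlying \cref{lem:RSWddependent}. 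Both error terms are then of order $R(P) k^{-2/(d-1)}$, and the lemma follows by substituting $R(P) \le c_\Phi \Phi_\beta((\origin,h),P)^{1/\alpha}$ from \eqref{cphi}, which is valid for every $h\ge 0$.

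The main obstacle is making this quantitative step fully rigorous, in particular the claim that the (at most $d$) selected facets of $P$ near $u_F$ control $\support(P_I,u)$ with a constant depending only on $d$: simplicity of $P$ guarantees that the outer normals of these $d$ facets form a basis of the normal cone at the supporting face, so a direction $u$ close to $u_F$ is bounded by an explicit convex combination, but quantifying the resulting constant cleanly requires some work. If the needed net-property of $Q$'s facet normals proves awkward to extract from \cref{lem:RSWddependent} as stated, an attractive alternative is to apply a ``dropping facets'' procedure directly to $P$---iteratively removing the facet whose removal minimizes the Hausdorff increase---and to show by an RSW-type telescoping argument that after $n-k$ removals the total distance is still $O(R(P) k^{-2/(d-1)})$. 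Either way, the simplicity hypothesis on $P$ is what keeps the local complexity of the selection dimensionally bounded and permits the loss factor $1/d$ from $k$ to $k'$ to be absorbed into $C_2$.
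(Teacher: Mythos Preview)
Your selection of $I$ is essentially the one the paper uses: the paper also applies \cref{lem:RSWddependent} with roughly $k/d$ facets and, for each facet of $Q$, picks the (at most $d$) facets of $P$ through the face of $P$ that this facet of $Q$ supports. One arithmetic slip: with $k'=\lceil k/d\rceil$ you get $|I|\le d k'$, which can exceed $k$ for every $k$ not divisible by $d$, regardless of how large $k_0$ is. Use $k'=\lfloor k/d\rfloor$ instead, as the paper does.

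The genuine gap is in your proposed bound $\distH(P,P_I)\le C'\,\distH(P,Q)$. Your argument hinges on the claim that the outer normals of $Q$ are quasi-uniformly spread on $\Sph^{d-1}$, giving $\|u-u_F\|=O((k')^{-1/(d-1)})$. This is not part of the statement of \cref{lem:RSWddependent} and is in general false: for an elongated convex body the optimal circumscribed polytope concentrates almost all its facet normals near the long axis, so no dimensional net bound on the normals holds. Your fallback ``dropping facets'' telescoping idea would require an entirely separate RSW-type analysis and is not needed here.

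What you are missing is that your own choice of $I$ already forces $P_I\subset Q$, which gives $\distH(P,P_I)\le \distH(P,Q)$ with constant $1$ and no angular argument at all. Indeed, fix a facet $F$ of $Q$ with outer unit normal $u_F$, and let $G$ be the supporting face of $P$ in direction $u_F$. Then $u_F$ lies in the normal cone of $P$ at $G$, so $u_F=\sum_i \lambda_i n_i$ with $\lambda_i\ge 0$, where the $n_i$ are the outer normals of the facets of $P$ through $G$ (all of which are in $I$). Hence for any $x\in P_I$,
\[
\langle x,u_F\rangle=\sum_i \lambda_i \langle x,n_i\rangle \le \sum_i \lambda_i\, \support(P,n_i)=\support(P,u_F)\le \support(Q,u_F),
\]
so $x\in \halfspace_F$. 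Since this holds for every facet $F$ of $Q$, we get $P\subset P_I\subset Q$. The paper phrases this via a small perturbation of $Q$ so that each facet meets $P$ in a single vertex, but the content is exactly this normal-cone containment. After that, the bound $R(P)\le c_\Phi\,\Phi_\beta((\origin,h),P)^{1/\alpha}$ from \eqref{cphi} finishes the proof, exactly as you say.
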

\begin{proof}
	We set 
	$k_0$ such that $\left\lfloor k_0/d \right\rfloor \ge N_1 $,
	where $ N_1 $ is from Lemma \ref{lem:RSWddependent}.
	We apply Lemma \ref{lem:RSWddependent} to $P$ and $n'=\lfloor k/d \rfloor$.
	We obtain a polytope $Q\supset P$ with $\lfloor k/d \rfloor$ facets and 
	\[
	\distH(P,Q)
	< C_1 R(P) \left\lfloor \frac kd \right\rfloor ^{- \frac 2{d-1}}   
	< C_2 c_\Phi \Phi_\beta((\origin,h),P)^{\frac{1}{\alpha}} k ^{- \frac 2{d-1}} , 
	\]
	where the second inequality is a direct consequence of the definition \eqref{cphi} of $c_\Phi$.
	
	By means of shifting and rotating the facets of $Q$ slightly, we can assume that each of the facets of $Q$ meets exactly one vertex of $P$ in its interior.
	Let $I$ be the set of indices of facets of $P$ with one vertex in a facet of $Q$.
	Since $P$ is simple, we have 
	$$|I| \leq d\, f_{d-1}(Q) = d \left\lfloor \frac kd \right\rfloor \leq k,$$
	where we recall that $f_{d-1}(Q)$ is the number of facets of $Q$.
	Finally, we observe that
	$ P \subset P_I \subset Q $, which implies $ \distH ( P , P_I ) \leq \distH ( P , Q )$.
\end{proof}

It turns out that we can actually delete a facet of a polytope while controlling its $\Phi$-content.
This is stated formally in the following lemma, which is analogue to \cite[Lemma~3.3]{bonnet_cells_2018}, with $\Phi$ replaced by $\Phi_\beta$.
\begin{lemma}
	\label{lem:RSW3}
	There exist constants $N_2$ and $C_3$, depending on $d$ and $\beta$ such that the following holds.
	For any  $ n > N_2 $ and any simple polytope $ P = \cap_{i=1}^n \halfspace_i \in \cP_n$, there exists a subset $ J \subset [ n ] $ of cardinality at least $ n / 4 $ such that, for any $ j \in J $ and $h\ge 0$,
	\begin{equation}
		\label{eq:dHdiff1}
		\distH \left( P , P_{[n]\setminus\{j\}} \right) 
		< C_3 c_\Phi \Phi_\beta((\origin,h),P)^{\frac{1}{\alpha}} n^{-\frac 2{d-1}} ,
	\end{equation}
	where $c_\Phi$ is the constant from \eqref{cphi}, and 
	\begin{equation}
		\label{eq:Phidiff1}
		\Phi_\beta \left( (\origin,h),P_{[n]\setminus\{j\}} \right) 
		< \exp \left( C_3^\alpha c_\Phi^\alpha n^{-\frac{d+1}{d-1}} \right) \Phi_\beta((\origin,h),P).
	\end{equation}
\end{lemma}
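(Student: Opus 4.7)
The plan is to combine \cref{lem:RSW2} with an averaging argument over the spherical measure of directions affected by removing a single facet, and then invoke \cref{lem:contPhi} to convert the resulting Hausdorff control into the required $\Phi_\beta$ bound. The geometric core is an identity on the total spherical measure of the direction sets on which support functions change, based on simplicity of $P$.

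First, I would apply \cref{lem:RSW2} to $P$ with $k=\lfloor n/2\rfloor$ (enlarging $N_2$ as needed so that $\lfloor n/2\rfloor>k_0$) to obtain $I\subset[n]$ with $|I|\le n/2$ and $\distH(P,P_I)\le C'c_\Phi\Phi_\beta((\origin,h),P)^{1/\alpha}n^{-2/(d-1)}$ for some $C'=C'(d,\beta)$. Set $J_0=[n]\setminus I$, of cardinality at least $n/2$. For each $j\in J_0$ the inclusion $I\subset[n]\setminus\{j\}$ gives $P\subset P_{[n]\setminus\{j\}}\subset P_I$, and monotonicity of the Hausdorff distance for nested convex sets yields $\distH(P,P_{[n]\setminus\{j\}})\le\distH(P,P_I)$, so \eqref{eq:dHdiff1} already holds for every $j\in J_0$.

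To refine $J_0$ for \eqref{eq:Phidiff1}, define, for $j\in[n]$, $U_j\coloneqq\{u\in\Sph^{d-1}:\support(P_{[n]\setminus\{j\}},u)>\support(P,u)\}$. I claim $U_j$ is contained, up to a null set, in the union of the normal-cone tracks $N_v(P)\cap\Sph^{d-1}$ over the vertices $v$ of the facet $F_j\coloneqq P\cap\partial\halfspace_j$. Indeed, if $u\in U_j$ and $v_u,w$ are support points of $P$ and $P_{[n]\setminus\{j\}}$ in direction $u$, then $\langle w,u\rangle>\langle v_u,u\rangle$ forces $w\notin P$, hence $w$ lies strictly outside $\halfspace_j$; if $v_u\in\Int(\halfspace_j)$, the segment $[v_u,w]\subset P_{[n]\setminus\{j\}}$ crosses $\partial\halfspace_j$ at a point of $P_{[n]\setminus\{j\}}\cap\halfspace_j=P$ with inner product in direction $u$ strictly greater than $\langle v_u,u\rangle$, contradicting the maximality of $v_u$. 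Since $P$ is simple each vertex lies on exactly $d$ facets, and the normal cones at vertices partition $\Sph^{d-1}$ up to a null set, so
\[
\sum_{j=1}^n\sigma_{d-1}(U_j)\le\sum_{v\in\cF_0(P)}d\cdot\sigma_{d-1}(N_v(P)\cap\Sph^{d-1})=d\,\omega_d.
\]
Markov's inequality applied to the restriction of this sum to $J_0$ then shows that $J\coloneqq\{j\in J_0:\sigma_{d-1}(U_j)\le 4d\omega_d/n\}$ satisfies $|J_0\setminus J|\le n/4$, so $|J|\ge n/4$.

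Finally, for $j\in J$, apply \cref{lem:contPhi} with $\convexbody=P$, $\biggerconvexbody=P_{[n]\setminus\{j\}}$, $U=U_j$, and $\delta=\distH(P,P_{[n]\setminus\{j\}})$. The rescaled perturbation $\delta'=\delta/(c_\Phi\Phi_\beta((\origin,h),P)^{1/\alpha})$ is bounded by $C'n^{-2/(d-1)}\le 1$ for $n$ large, and combined with $\sigma_{d-1}(U_j)\le 4d\omega_d/n$ the lemma gives a ratio $1+\tilde Cc_\Phi^\alpha n^{-2/(d-1)}\cdot n^{-1}=1+\tilde Cc_\Phi^\alpha n^{-(d+1)/(d-1)}\le\exp(\tilde Cc_\Phi^\alpha n^{-(d+1)/(d-1)})$, using $-2/(d-1)-1=-(d+1)/(d-1)$. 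Taking $C_3$ large enough so that both $C_3\ge C'$ and $C_3^\alpha\ge\tilde C$, and enlarging $N_2$ if necessary, yields \eqref{eq:dHdiff1} and \eqref{eq:Phidiff1} simultaneously. The main obstacle is the spherical partition identity above: simplicity of $P$ is precisely what makes the summation over $j$ telescope into $d\omega_d$, supplying the extra $n^{-1}$ factor that sharpens $n^{-2/(d-1)}$ into $n^{-(d+1)/(d-1)}$.
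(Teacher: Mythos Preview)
Your proposal is correct and follows essentially the same strategy as the paper's proof: apply \cref{lem:RSW2} with $k\approx n/2$ to get the Hausdorff bound for all $j$ outside a small index set, use simplicity to show $\sum_j \sigma_{d-1}(U_j)\le d\omega_d$ via the normal-cone partition, select by averaging a subset of size $\ge n/4$ with $\sigma_{d-1}(U_j)=O(n^{-1})$, and then feed both bounds into \cref{lem:contPhi}. The only cosmetic differences are your choice $k=\lfloor n/2\rfloor$ versus the paper's $k=n-2\lceil n/4\rceil$, and that the paper identifies $U_j$ exactly as $\bigcup_{v_l\in H_j}N(v_l)$ (giving equality $\sum_j\sigma_{d-1}(U_j)=d\omega_d$) while you argue the containment up to a null set; neither affects the outcome.
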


Note that, both \eqref{eq:dHdiff1} and \eqref{eq:Phidiff1} imply that $P_{[n]\setminus\{j\}}$ is bounded and thus is a polytope.

\begin{proof}
	We will start by showing that \cref{lem:RSW2} implies \eqref{eq:dHdiff1} for $j\in J'$ for some $J'\subset [n]$ of cardinality at least $n/2$, and then we will prove that \eqref{eq:Phidiff1} holds for at least half of the $j\in J'$.
	
	Put $ k = n - 2 \lceil n/4 \rceil $ which implies $k \geq k_0$ when $n > 2k_0 + 4$, where $k_0$ is taken from \cref{lem:RSW2}.
	Note that we will set the value of $N_2 \geq 2k_0 + 4$ later in the proof.
	By \cref{lem:RSW2} there is a set $ I \subset [n] $ of cardinality $ k $ such that 
	\begin{equation}
		\label{e:bound-dH-P-PI}
		\distH ( P , P_I )
		< 
		C_2 c_\Phi  \Phi_\beta((\origin,h),P)^{\frac{1}{\alpha}} k^{ - \frac 2{d - 1} }     
		\leq
		C_3 c_\Phi  \Phi_\beta((\origin,h),P)^{\frac{1}{\alpha}} n^{-\frac 2{d-1}}.
	\end{equation}
	Hence \eqref{eq:dHdiff1} holds for any $j \in J' = [n]\setminus I$, because $P \subset P_{[n]\setminus\{j\}} \subset P_I$.
	Note that $J'$ has cardinality at least $n/2$.
	Therefore, it only remains to show that, for at least half of the $j$ not in $I$, equation $(\ref{eq:Phidiff1})$ holds as well.
	Set 
	\begin{align}
		\label{e:delta-prime}
		\delta'
		\coloneqq C_3 n^{-\frac2{d-1}} 
		\qquad \text{ and } \qquad
		U_j = {\rm cl} \left\{  u  \in \Sph^{d-1} :
		h \left( P_{ [n]\setminus\{j\} } ,  u \right) \neq h ( P ,  u ) \right\} .
	\end{align}
	Also, set
	$N_2$
	such that we have 
	$ N_2 \ge 2k_0 + 4 $
	and
	$ \delta' < 1 $
	for 
	$ n > N_2 $.
	Applying \cref{lem:contPhi} with 
	$ K = P $,
	$ L = P_{[n]\setminus\{j\}} $,
	$ \delta 
	= \delta' c_\Phi \Phi_\beta((\origin,h),P)^{\frac{1}{\alpha}} $
	and 
	$ U = U_j $,
	gives
	\begin{equation}
		\label{eq:PhidiffS}
		\Phi_\beta \left((\origin,h), P_{ [n]\setminus\{j\} } \right) - \Phi_\beta((\origin,h),P)
		< \delta' c_m c_\Phi^\alpha 2^{\alpha - 1} c_\Phi^\alpha \Phi_\beta((\origin,h),P) \sigma_{d-1} \left( U_j \right)
	\end{equation}	
	because the condition \eqref{eq:localvariation} is satisfied thanks to the definition \eqref{e:delta-prime} of $U_j$ and the fact that $\support(\biggerconvexbody,u) - \support(\convexbody,u) \leq \distH(P,P_I)$ which is bounded by \eqref{e:bound-dH-P-PI}.
	We need to estimate the measure of the set $ U_j $.
	Denote by ${v}_1, \ldots, {v}_m$ the vertices of the polytope $P$.
	Since the polytope is simple, each vertex is the intersection of precisely $d$ hyperplanes. 
	Denote by $N ( {v}_l )$ the unit vectors in the normal cone of $P$ at $ {v}_l $, i.e.
	\[
	N({v}_l )
	= \left\{  {u}  \in \Sph^{d-1} \mid
	\support(P,  {u} ) = \langle {v}_l , {u} \rangle  \right\} .
	\]
	The essential observation is that
	\[
	U_j = \bigcup_{ {v}_l \in \hyperplane_j} N( {v}_l ) .
	\]
	Observe that the sets $N( {v}_l )$ have pairwise disjoint interiors and cover $\Sph^{d-1}$. Thus for almost all $ {u} \in \Sph^{d-1} $ we have
	\begin{equation*}
		\sum_{j=1}^n \1{ u \in U_j }
		=
		\sum_{j=1}^n \sum_{l=1}^m \1{ {v}_l \in \hyperplane_j} \1{ u \in N({v}_l) } 
		=
		\underbrace{ \sum_{l=1}^m  \1{ u  \in N({v}_l) } }_{=1}
		\underbrace{ \sum_{j=1}^n  \1{ {v}_l \in \hyperplane_j } }_{=d}
		= d 
	\end{equation*}
	This yields
	$ \sum_{j=1}^n \sigma_{d-1} (U_j)  = d \omega_d$
	and in particular
	\[
	\sum_{j\notin I} \sigma_{d-1} ( U_j)  \leq d \omega_d  .
	\]	
	This implies that, for at least half of the $j\notin I$, we have
	\[
	\sigma_{d-1}(U_j)
	\leq \omega_d d \left(\frac{n-k}{2}\right)^{-1} 
	= \omega_d d \left\lceil \frac n4 \right\rceil ^{-1}
	\leq 4 \omega_d d n^{-1} .
	\]
	Otherwise we would have at least half of the $j\notin I$ with the reverse inequality and, because $|I|=k = n - 2 \lceil n/4 \rceil$, that would imply 
	\[
	d \omega_d
	\geq \sum_{j\notin I}^n \sigma_{d-1} (U_j)
	> \frac12 (n-k) \frac{2d \omega_d}{n-k}
	= d \omega_d.
	\]	
	Combined with \eqref{eq:PhidiffS}, it shows that there exists a set $J\subset [n]\setminus I$ of cardinality $(n-k)/2 = \lceil n/4 \rceil$ such that, for any $j\in J$, we have
	\begin{align*}
		\Phi_\beta\left((\origin,h), P_{[n]\setminus\{j\}} \right)-\Phi_\beta((\origin,h),P) 
		&<
		\delta' c_m c_\Phi^\alpha 2^{\alpha - 1} c_\Phi^\alpha \Phi_\beta((\origin,h),P)
		4 \omega_d d n^{-1}
		\\&=
		4 \omega_d d C_3 2^{\alpha - 1} c_\Phi^\alpha \Phi_\beta((\origin,h),P) n^{-\frac{d+1}{d-1}}
		\\&\leq
		C_3^\alpha c_\Phi^\alpha \Phi_\beta((\origin,h),P) n^{-\frac{d+1}{d-1}} ,
	\end{align*}
	where the equality is a direct consequence of the definition \eqref{e:delta-prime} of $\delta'$ and the last inequality can be achieved by choosing $C_3$ big enough.
	This, by the fact that $1+x\le e^x$, implies \eqref{eq:Phidiff1}.
\end{proof}

We need to state the following elementary but useful lemma.
Let $ \mathfrak{S}_n $ denote the set of permutations of $[n]$.
For $ x = ( x_1 , \ldots , x_n ) $ and $ \sigma \in  \mathfrak{S}_n $, we write
$  x_\sigma \coloneqq ( x_{\sigma(1)} , \ldots , x_{\sigma(n)} ) $.
It is clear that the following holds, see \cite[Lemma~4.4]{bonnet_cells_2018},
which allows us to exploit the existence of many “good” facets.
\begin{lemma}
	\label{lem:TechnicalPermutation}
	Let $ ( X , \Sigma , \psi ) $ be a measured space,
	$m$, $n > 0 $ be integers,
	$ f \colon X^n \to [0,\infty) $ be a measurable function
	and $S$, $T \subset X^n $ measurable sets.
	Assume that
	\begin{itemize}
		\item $ f $ is symmetric:
		for any $ \sigma \in \mathfrak{S}_n $
		and any $  {x} \in X^n $,
		we have $ f (  {x}_\sigma ) = f (  {x} ) $;
		\item $ S $ is symmetric:
		for any $ \sigma \in \mathfrak{S}_n $,
		and any $  {x} \in X^n $ we have
		$ \1 {x_\sigma \in S } = \1 {x \in S } $;
		\item for any $  {x} \in S $,
		there exist at least $ p $ permutations $ \sigma \in \mathfrak{S}_n $
		such that $ {x}_\sigma \in T $.
	\end{itemize}
	Then
	\[
	\frac{p}{n!}
	\int_{ X^n }
	\1{x \in S }
	f (  {x} ) 
	{\psi}^{n}(\dint x)
	\leq 
	\int_{ X^n } 
	\1{x \in T}
	f ( {x} )
	{\psi}^{n}(\dint x) .
	\]
\end{lemma}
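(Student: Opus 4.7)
The plan is to carry out a standard averaging argument over the symmetric group $\mathfrak{S}_n$. The starting observation is that the product measure $\psi^n$ is invariant under any coordinate permutation, so for every $\sigma \in \mathfrak{S}_n$ the change of variable $y = x_\sigma$ is measure-preserving. Combined with the symmetry of $f$ (which gives $f(y_{\sigma^{-1}}) = f(y)$), this yields the identity
\[
\int_{X^n} \1{x_\sigma \in T}\, f(x)\, \psi^n(\dint x)
= \int_{X^n} \1{x \in T}\, f(x)\, \psi^n(\dint x)
\]
for every $\sigma$.

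Next I would sum this identity over all $n!$ permutations and interchange the finite sum with the integral to get
\[
n!\int_{X^n} \1{x \in T}\, f(x)\, \psi^n(\dint x)
= \int_{X^n} \Bigl(\sum_{\sigma \in \mathfrak{S}_n} \1{x_\sigma \in T}\Bigr) f(x)\, \psi^n(\dint x).
\]
The key step is then to invoke the third hypothesis, which states exactly that $\sum_{\sigma} \1{x_\sigma \in T} \ge p$ for every $x \in S$. Restricting the integrand on the right to the set $S$ (legitimate because $f \ge 0$) and applying this pointwise bound gives
\[
n!\int_{X^n} \1{x \in T}\, f(x)\, \psi^n(\dint x)
\ge p \int_{X^n} \1{x \in S}\, f(x)\, \psi^n(\dint x),
\]
and dividing by $n!$ is the claim.

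I do not expect any real obstacle: the entire argument is a single averaging step over $\mathfrak{S}_n$, relying only on product-measure invariance and non-negativity of $f$. The symmetry of $S$ is not directly consumed by the above chain of inequalities, but it ensures that the counting hypothesis is unambiguous under relabelling of coordinates, so that the pointwise bound $\sum_\sigma \1{x_\sigma \in T}\ge p\cdot\1{x \in S}$ is well-posed.
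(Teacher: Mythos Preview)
Your proof is correct and is exactly the standard averaging argument one would expect. The paper itself does not give a proof: it simply states ``It is clear that the following holds'' before the lemma, so your write-up supplies precisely the elementary verification the authors omit.
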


In the next lemma we investigate the measure of those polytopes $P_{[n]}$ which are close to $P_{[n-1]}$ in the Hausdorff distance.
For this, we estimate the contribution of the last half-space.
This corresponds to \cite[Lemma~4.5]{bonnet_cells_2018}, but the dependence of $\Phi_\beta$ on $(\origin,h)$
leads to a different scaling in the bound.
\begin{lemma}
	\label{lem:0IntTakeOutFacets}
	Let $h\geq 0$.
	For any $a>0$ and any measurable function $f \colon (\halfspaces)^{n-1}\to[0,\infty)$, it holds that
	\begin{align*}
		& \int_{(\halfspaces)^n}
		\1{ P_{[n]} \in \cP_n }
		\1 { \distH \left( P_{[n]} , P_{[n-1]} \right) < a }
		f ( \halfspace_1, \ldots , \halfspace_{n-1} )    
		\widetilde{\mu}_{\beta;(\origin,h)}^{\otimes n}(\dint \halfspace_{1:n})
		\\
		&<
		a c_m \omega_d c_\Phi^{\alpha-1}
		\int_{(\halfspaces)^{n-1}}
		\Phi_\beta\left((\origin,h),  P_{[n-1]} \right)^{\frac{\alpha-1}{\alpha}}
		\1 { P_{[n-1]} \in \cP_{n-1} }
		f ( \halfspace_{1:n-1} )    
		\widetilde{\mu}_{\beta;(\origin,h)}^{\otimes n-1}(\dint \halfspace_{1:n-1}) ,
	\end{align*}
	where $c_\Phi$ and $c_m$ are the constants from \eqref{cphi} and \eqref{cm}, respectively.
\end{lemma}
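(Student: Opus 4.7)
The plan is to integrate out the last half-space by Fubini, turning the lemma into a uniform bound on the inner measure. First, whenever the left-hand side integrand is non-zero, I claim $P_{[n-1]} \in \cP_{n-1}$ automatically: each of $\halfspace_1, \ldots, \halfspace_{n-1}$ still contributes a facet to $P_{[n-1]} \supset P_{[n]}$ (any $(d-1)$-dimensional face of $P_{[n]}$ contained in $\hyperplane_i$ remains $(d-1)$-dimensional in the larger polyhedron, and it cannot be $d$-dimensional), and Hausdorff closeness to the bounded set $P_{[n]}$ forces boundedness of $P_{[n-1]}$. This lets me freely insert the indicator $\1{P_{[n-1]}\in\cP_{n-1}}$ on the left-hand side.

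Next, write $\halfspace_n = \halfspace(u,t)$. For fixed $\halfspace_{1:n-1}$ with $P_{[n-1]}\in\cP_{n-1}$, I characterise the admissible $\halfspace_n$: requiring $\halfspace_n$ to contribute a facet of $P_{[n]}$ forces $t < \support(P_{[n-1]},u)$, and since the Hausdorff distance between $P_{[n-1]}$ and its truncation by $\halfspace(u,t)$ is the depth of the removed cap $\support(P_{[n-1]},u) - t$, the condition $\distH(P_{[n]},P_{[n-1]})<a$ reads $t > \support(P_{[n-1]},u) - a$. Using the product density from \cref{density-on-half-spaces}, the inner integral becomes
\[
\int_{\Sph^{d-1}}\int_{\support(P_{[n-1]},u)-a}^{\support(P_{[n-1]},u)} m_{\beta;(\origin,h)}(t)\,\dint t\,\sigma_{d-1}(\dint u).
\]

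Finally, I will bound $m_{\beta;(\origin,h)}(t)$ uniformly on this range. Non-emptiness of $P_{[n]}$ forces $\hyperplane(u,t)$ to meet $P_{[n-1]}$, which yields $|t|\le R(P_{[n-1]})$ and hence $h+t^2\le R((\origin,h),P_{[n-1]})^2 \le c_\Phi^2\,\Phi_\beta((\origin,h),P_{[n-1]})^{2/\alpha}$ by \eqref{cphi}. Plugging this into \eqref{cm} gives the $t,u$-independent bound
\[
m_{\beta;(\origin,h)}(t)\le c_m c_\Phi^{\alpha-1}\Phi_\beta((\origin,h),P_{[n-1]})^{(\alpha-1)/\alpha}.
\]
The remaining $t$- and $u$-integrations contribute factors $a$ and $\omega_d$, producing exactly the claimed coefficient $a c_m \omega_d c_\Phi^{\alpha-1}$. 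The only delicate step is the geometric one, namely identifying precisely the admissible set of $\halfspace_n$ and justifying $P_{[n-1]}\in\cP_{n-1}$ on the LHS; once this is in place, the rest is a direct density estimate.
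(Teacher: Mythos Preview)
Your proof is correct and follows the same route as the paper: Fubini to isolate the $\halfspace_n$-integral, translate the facet and Hausdorff constraints into $t\in(\support(P_{[n-1]},u)-a,\support(P_{[n-1]},u))$, then bound $m_{\beta;(\origin,h)}(t)$ via \eqref{cm} and \eqref{cphi}. One small imprecision: the Hausdorff distance between $P_{[n-1]}$ and its truncation is in general only \emph{bounded below} by $\support(P_{[n-1]},u)-t$, not equal to it (cut the apex off a thin triangle), but that inequality is precisely what you need and use, so the argument stands; your explicit verification that $P_{[n-1]}\in\cP_{n-1}$ on the support of the integrand is a detail the paper leaves implicit.
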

\begin{proof}
	The essential part of the proof is to bound
	\[
	I_Q =
	\int_{\halfspaces}
	\1 {Q \cap \hyperplane_n \ne \emptyset}
	\1 {\distH (Q , Q\cap \halfspace_n ) < a}
	\widetilde{\mu}_{\beta;(\origin,h)}(\dint \halfspace_n),
	\]
	for any $ Q \in \mathbb{K}  $, where $\hyperplane_n$ is the hyperplane bounding the half-space $\halfspace_n$.
	The lemma will follow by applying that bound to
	$ Q 
	= P_{[n-1]} 
	= \cap_{i=1}^{n-1} \halfspace_i
	\in \cP_{n-1} $, then multiplying by $f$ and integrating over $\widetilde{\mu}^{\otimes(n-1)}_{\beta,(\origin,h)}$.
	
	By \cref{density-on-half-spaces},
	\begin{align*}
		I_Q
		&=
		\int_{\Sph^{d-1}} \int_{-\infty}^\infty 
		\1 {Q \cap \hyperplane(u,t) \ne \emptyset}
		\1 {\distH (Q , Q\cap \halfspace(u,t) ) < a}
		m_{\beta;(\origin,h)}(t) \dint t \, \sigma_{d-1}( \dint u ) .
	\end{align*}
	Note that $Q \cap \hyperplane(u,t) \ne \emptyset$ if and only if $t\in [\support(Q,-u),\support(Q,u)]$, and that, when this condition is fulfilled, $\distH (Q , Q\cap \halfspace(u,t) ) < a$ if and only if $t > \support(Q,u)-a$.
	Hence, we can rewrite the above integral as
	\begin{align*}
		I_Q
		&=
		\int_{\Sph^{d-1}} \int_{\max \left( \support(Q,-u) , \support(Q,u)-a \right) }^{\support(Q,u)}
		m_{\beta;(\origin,h)}(t) \dint t \, \sigma_{d-1}( \dint u )
		\\&\le
		a c_m \int_{\Sph^{d-1}} (h + \support(Q,u)^2)^{\frac{\alpha-1}{2}} \sigma_{d-1}( \dint u )
		\le a c_m \omega_d c_\Phi^{\alpha-1} \Phi_\beta((\origin,h),Q)^{\frac{\alpha-1}{\alpha}} ,
	\end{align*}	
	where the first inequality is a direct consequence of \Cref{i:density-1} in \cref{density-on-half-spaces}, and the second inequality follows from \eqref{e:R(K)} and the definition \eqref{cphi} of $c_\Phi$.
	The rest follows.
\end{proof}

Combining the previous ingredients yields the desired recursive bound,
following the same scheme as in \cite[Theorem~4.1]{bonnet_cells_2018}.

\begin{proof}[Proof of \cref{upper-bound-beta}]
	Set
	$ a 
	= C_3 c_\Phi n^{-2/(d-1)} $
	and
	$ b 
	= C_3^\alpha c_\Phi^\alpha n^{-(d+1)/(d-1)} $,
	where $C_3$ is the constant of \cref{lem:RSW3}.  
	By \cref{complementary}
	$$
	\P{\smash{\typcell}\in\cP_n}
	=
	\Gamma\left(n + \frac{2 \beta+2}{\alpha}+1\right)
	\int_{\widehat{\cP}_{\origin,n}}
	\1{\Phi_\beta(K) < 1}
	\nu_{\beta;\origin,n} (\dint K).
	$$
    This can also be written 
	$$
	\P{\smash{\typcell}\in\cP_n}
	=
	\frac{\gamma c_{d+1,\beta}}{\Lambda_\beta}
	\frac{\Gamma\left(n + \frac{2 \beta+2}{\alpha}+1\right)}{n!}
	\int_{\R_+}
	\int_{(\halfspaces)^n}
	\1 { P_{[n]} \in \cP_n}
	\1{\Phi_\beta((\origin,h), P_{[n]}) < 1}
	\widetilde{\mu}_{\beta;(\origin,h)}^{\otimes n}(\dint \halfspace_{1:n})
	h^\beta \dint h,
	$$
	where $P_{[n]} = \cap_{i=1}^n \halfspace_i$.
	Now, we want to use Lemma $\ref{lem:RSW3}$ which, roughly speaking, tells us that the variable $\halfspace_n$ has a small influence.
	Set 
	\[
	S = \left\{ \halfspace_{1:n} \in (\halfspaces)^n \mid \cap_{i=1}^n \halfspace_i \in \cP_n  \text{ and $\cap_{i=1}^n \halfspace_i$ is a simple polytope}   \right\} ,
	\]
	and
	\[ 
	T = \left\{ \halfspace_{1:n} \in S \mid 
	\distH \left( P_{[n]} , P_{[n-1]} \right) < a \Phi_\beta \left((\origin,h),P_{[n]}\right)^\frac{1}{\alpha} \text{, } 
	\Phi_\beta\left((\origin,h),P_{[n-1]}\right) < e^b \Phi_\beta\left((\origin,h),P_{[n]}\right) \right\} .
	\]
	Lemma $\ref{lem:RSW3}$ tells us that, 
	for $n$ large enough (specifically, when $n>N_2$ with $N_2$ from \cref{lem:RSW3}), 
	for any $\halfspace_{1:n} \in S $, there exists at least $n!/4$ permutations $\sigma\in\mathfrak{S}_n$ such that $(\halfspace_{1:n})_\sigma \in T$.
	Hence, Lemma $\ref{lem:TechnicalPermutation}$ implies
	\begin{align*}
		\frac14 \P{\smash{\typcell}\in\cP_n}
		&\leq
		\frac{\gamma c_{d+1,\beta}}{\Lambda_\beta}
		\frac{\Gamma\left(n + \frac{2 \beta+2}{\alpha}+1\right)}{n!}
		\int_{\R_+}
		\int_{(\halfspaces)^n}
		\1 { P_{[n]} \in \cP_n}
		\1 {\distH \left( P_{[n]} , P_{[n-1]} \right) < a}
		\\&\quad \times
		\1{\Phi_\beta\left((\origin,h),P_{[n-1]}\right) < e^b }
		\widetilde{\mu}_{\beta;(\origin,h)}^{\otimes n}(\dint \halfspace_{1:n})
		h^\beta \dint h.
	\end{align*}
	Using Lemma \ref{lem:0IntTakeOutFacets} with 
	$ f(\halfspace_1,\ldots,\halfspace_{n-1})
	= \1{\Phi_\beta\left((\origin,h),P_{[n-1]}\right) < e^b }$,
	we have 
	\begin{align*}
		&\int_{(\halfspaces)^n}
		\1 { P_{[n]} \in \cP_n}
		\1 {\distH \left( P_{[n]} , P_{[n-1]} \right) < a}
		\1{\Phi_\beta\left((\origin,h),P_{[n-1]}\right) < e^b }
		\widetilde{\mu}_{\beta;(\origin,h)}^{\otimes n}(\dint \halfspace_{1:n})
		\\& \le
		a c_m \omega_d c_\Phi^{\alpha-1}
		\int_{(\halfspaces)^{n-1}}
		\Phi_\beta\left((\origin,h),  P_{[n-1]} \right)^{\frac{\alpha-1}{\alpha}}
		\1 { P_{[n-1]} \in \cP_{n-1} }
		\1{\Phi_\beta\left((\origin,h),P_{[n-1]}\right) < e^b }
		\widetilde{\mu}_{\beta;(\origin,h)}^{\otimes n-1}(\dint \halfspace_{1:n-1})
		\\& \le
		a c_m \omega_d c_\Phi^{\alpha-1}
		\exp \left(\frac{\alpha-1}{\alpha} b\right)
		\int_{(\halfspaces)^{n-1}}
		\1 { P_{[n-1]} \in \cP_{n-1} }
		\1{\Phi_\beta\left((\origin,h),P_{[n-1]}\right) < e^b }
		\widetilde{\mu}_{\beta;(\origin,h)}^{\otimes n-1}(\dint \halfspace_{1:n-1}).
	\end{align*}
	
	By recalling the definition \eqref{e:def-nu-beta-o-n} of $\nu_{\beta;\origin,n}$, this brings us to
	\begin{equation*}
		\frac14 \P{\smash{\typcell}\in\cP_n}
		\le
		a c_m \omega_d c_\Phi^{\alpha-1}
		\exp \left(\frac{\alpha-1}{\alpha} b\right)
		\frac{\Gamma\left(n + \frac{2 \beta+2}{\alpha}+1\right)}{n}
		\int_{\widehat{\cP}_{\origin,n-1}}
		\1{\Phi_\beta(\widehat{P}) < e^b }
		\nu_{\beta;\origin,n-1} (\dint \widehat{P}).
	\end{equation*}
	
	Applying now \cref{complementary-variable-change} with $n'=n-1$ and $b' = e^b $, we get
	\begin{align*}
		&\frac14 \P{\smash{\typcell}\in\cP_n} 
		\\&\leq
		a c_m \omega_d c_\Phi^{\alpha-1}
		\exp \left(\left(\frac{\alpha-1}{\alpha} + n - 1 + \frac{2\beta}{\alpha}\right) b\right)
		\frac{n + \frac{2 \beta+2}{\alpha}}{n}
		\Gamma\left(n + \frac{2 \beta+2}{\alpha}\right)
		\int_{\widehat{\cP}_{\origin,n-1}}
		\1{\Phi_\beta(\widehat{P}) < 1}
		\nu_{\beta;\origin,n-1} (\dint \widehat{P})
		\\&=
		a c_m \omega_d c_\Phi^{\alpha-1}
		\frac{n + \frac{2 \beta+2}{\alpha}}{n}
		\exp \left(\left(\frac{\alpha-d-1}{\alpha} + n\right)b\right)
		\P{\smash{\typcell}\in\cP_{n-1}},
	\end{align*}
	where the equality is due to \cref{complementary}.
	Recall that $a = C_3 c_\Phi n^{-2/(d-1)}$.
	Since 
	$
	\left(\frac{\alpha-d-1}{\alpha} + n \right)b
	< 2 n b
	= 2 C_3^\alpha c_\Phi^\alpha n^{- \frac2{d-1}} 
	< 1
	$
	for 
	$ n > (2 C_3^\alpha c_\Phi^\alpha )^{\frac{d-1}{2}}$,
	and $\frac{n + \frac{2 \beta+2}{\alpha}}{n}<2$,
	we thus have
	\[ 
	\P{\smash{\typcell}\in\cP_n} 
	\leq C n^{-\frac{2}{d-1}} \P{\smash{\typcell}\in\cP_{n-1}},
	\]
	for any $C\geq 2 e \omega_d C_3 c_m c_\Phi^\alpha $ and any
	$ n > \max \left( N_2 , \left( 2 C_3^\alpha c_\Phi^\alpha \right)^{(d-1)/2} \right) $.
	Now, similar to previous bounds, by taking $C$ sufficiently large, the inequality holds also for any $n\geq d+1$. Thus the theorem holds.
\end{proof}

\section[Concentration for β-Delaunay]{Concentration for $\beta$-Delaunay}
\label{sec:concentration}

In this section we establish the concentration of the distribution of the maximal degree in a growing window for the $\beta$-Delaunay model for some fixed $\beta$.
We consider the maximal degree of $\cD_\beta$ observed in a window $B \subset \R^d$, defined by
$$M^B \coloneqq \max_{v\in \cF_0(\cD_\beta) \cap B} \deg_{G(\cD_\beta)} v ,$$
and establish the following concentration phenomenon when $B$ is a large cubical window of the form 
$$ W_\rho \coloneqq [0,\rho^{\frac{1}{d}}]^d , \quad  \rho>0.$$
\begin{theorem}[Concentration for $\beta$-Delaunay]
	\label{concentration}
	Let $\beta>-1$, $d\ge 2$.	
	\begin{enumerate}
		\item There exists a sequence $(\rho(k))_{k\in\N}$ such that $\P{M^{W_{\rho(k)}} \in \{k, \ldots, k + \ceil{\frac{d}{2}} - 1\}} \rightarrow 1$, as $k\to\infty$.
		\item There exists a function $\rho \mapsto k(\rho)$ such that $\P{M^{W_\rho} \in \{k(\rho), \ldots, k(\rho) + \ceil{\frac{d}{2}}\}} \rightarrow 1$, as $\rho\to\infty$.
	\end{enumerate}
\end{theorem}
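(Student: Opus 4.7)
The plan is to combine the duality with first- and second-moment arguments applied to the number $N_n^\rho$ of Delaunay vertices in $W_\rho$ having degree at least $n$. By \cref{duality}, the degree of $v\in \cF_0(\cD_\beta)$ equals $f_{d-1}(\cell_{\eta_\beta}(v,h))$, so the Mecke formula together with stationarity yield $\E{N_n^\rho}=\rho\Lambda_\beta\sum_{k\geq n}\P{\smash{\typcell}\in\cP_k}$. The upper bound in \cref{upper-bound-beta} forces the ratio $\P{\smash{\typcell}\in\cP_{k+1}}/\P{\smash{\typcell}\in\cP_k}=O(k^{-2/(d-1)})\to 0$, so the tail sum is dominated by its leading term and $\E{N_n^\rho}\leq (1+o(1))\rho\Lambda_\beta(Cn^{-2/(d-1)})^n$. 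Markov's inequality then gives $\P{M^{W_\rho}\geq n}\leq \E{N_n^\rho}$, and taking $n=k^+(\rho)$ to be the smallest integer driving this bound to zero handles the upper tail.

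For the lower tail I apply the second moment method to $N_n^\rho$, using $\P{M^{W_\rho}<n}=\P{N_n^\rho=0}\leq \mathrm{Var}(N_n^\rho)/\E{N_n^\rho}^2$. The multivariate Mecke formula gives
\[
\E{N_n^\rho(N_n^\rho-1)}=\iint_{(W_\rho\times\R)^2}\P{f_{d-1}(\cell_{\tilde\eta}(x_1))\geq n,\ f_{d-1}(\cell_{\tilde\eta}(x_2))\geq n}\,\mu_\beta(\dint x_1)\,\mu_\beta(\dint x_2),
\]
with $\tilde\eta=\eta_\beta\cup\{x_1,x_2\}$. I split this integral at a scale $r_n$. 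For well-separated pairs ($\|v_1-v_2\|>r_n$), conditional on having at least $n$ facets, the spatial extent of a Voronoi cell is controlled by \cref{phi-comparable} together with the height tail estimate \cref{typical-height}, so with overwhelming probability the two flowers lie in disjoint balls and the events decouple, producing a contribution of $(1+o(1))\E{N_n^\rho}^2$. Close pairs contribute at most a multiple of $\E{N_n^\rho}\cdot r_n^d$ after a union bound, and $r_n$ can be chosen polynomial in $n$ so that this remainder is $o(\E{N_n^\rho}^2)$ whenever $\E{N_n^\rho}\to\infty$; the density bound \cref{number-of-vertices} is invoked to absorb the rare anomalously dense neighborhoods.

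The width of the concentration window is then read off from the identity $\log\E{N_{n+1}^\rho}-\log\E{N_n^\rho}\sim -\tfrac{2}{d-1}\log n$: only $O(\lceil d/2\rceil)$ consecutive integers can simultaneously lie outside the regime where the first moment has gone to $0$ and inside the regime where the second moment method applies. Statement (1) of the theorem follows by choosing the subsequence $\rho(k)$ so that $k^-(\rho(k))=k$, which saves one unit of width; statement (2) keeps $\rho$ arbitrary and accepts one extra unit of slack at the boundary.

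The main obstacle is the decoupling estimate for the variance: to ensure that two vertices with atypically many facets are not positively correlated one needs quantitative control on the diameter of a Voronoi cell conditioned on its facet number. This is precisely where the Complementary Theorem (\cref{complementary}) enters, as it reduces such a cell to an independent $\Gamma$-distributed $\Phi$-content times a shape, allowing \cref{phi-comparable} and \cref{phi-point} to translate a bound on the $\Phi$-content into a sharp bound on the diameter. Pushing the analysis through down to the tight width of $\lceil d/2\rceil$ (rather than the crude $d-1$ that a direct ratio computation yields) will likely require extracting this gain from the precise exponent $(n+\tfrac{2\beta+2}{2\beta+d+2})$ in the gamma factor of \cref{complementary}, which is the essentially new ingredient compared to \cite{bonnet_maximal_2020}.
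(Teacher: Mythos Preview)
Your first-moment upper bound matches the paper. For the lower tail the paper does \emph{not} use a second-moment argument. Instead it proves a direct lower bound $\P{M^B\ge k}\ge c\,V_d(B)^{-(2\beta+2)/d}\bigl(G(k)-\text{small}\bigr)$ (\cref{maximum-lower-bound}) by truncating $N^B[k]$ at level $tV_d(B)$ and controlling $\E{N^B[k]\,\1{N^B[k]>tV_d(B)}}$ via the Mecke formula, \cref{typical-height} and \cref{number-of-vertices}. It then partitions $W_\rho$ into well-separated subcubes $Q_i$ of volume $V\sim(k\log k)^{d/(2\beta+d+2)}$ and invokes the stabilization bound \cite[Lemma~4]{gusakova_beta-delaunay_2023} to make the events $\{N^{Q_i}[k]\ge 1\}$ conditionally independent. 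The width $\lceil d/2\rceil$ is then read off from balancing the combined loss $V^{-1}\cdot V^{-(2\beta+2)/d}\sim 1/(k\log k)$ against the gain $G_c(k)/G_c(k+m)\ge c\,k^{2m/(d-1)}$, which forces $2m/(d-1)>1$; the paper makes this precise via the continuous interpolation $G_c$ of $G$ and the choice $\rho(k)=G_c(k+\lceil d/2\rceil-\tfrac14)^{-1}$.

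Your second-moment plan has two concrete gaps. First, the decoupling: you assert that for well-separated $(x_1,x_2)$ the two flowers are disjoint with overwhelming probability, but \cref{phi-comparable} only converts a bound on $\Phi_\beta(\widehat{Z})$ into a bound on $h+\max_v\|v\|^2$, and you have no control on $\Phi_\beta(\widehat{Z})$ or on $h$ \emph{conditional on} $f_{d-1}(Z)\ge n$ beyond what \cref{complementary} would give, which you invoke only speculatively. Any residual probability of a large flower must be shown to be $o(\P{f_{d-1}(Z)\ge n})$, i.e.\ smaller than something already super-exponentially small in $n$; this is precisely the quantitative stabilization content of \cite[Lemma~4]{gusakova_beta-delaunay_2023}, which you do not use. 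Second, your attribution of the width $\lceil d/2\rceil$ to the exponent $n+\tfrac{2\beta+2}{2\beta+d+2}$ in \cref{complementary} is incorrect: that exponent is nowhere used in \cref{sec:concentration}, and the improvement over \cite{bonnet_maximal_2020} comes not from a new ingredient but from a sharper choice of $\rho(k)$ and of the block size $V$ within the same block-decomposition scheme (cf.\ \cref{relation-poisson}). A bare second-moment approach, without borrowing the stabilization estimate, is unlikely to reach the stated window.
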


\begin{rem}[Comparison to the results  and proof techniques of \cite{bonnet_maximal_2020} for the classical Delaunay model]
	\label{relation-poisson}
	All the arguments in this section are applicable to the classical Poisson-Delaunay model (up to a few straightforward adaptations) and thus \cref{concentration} applies to that setting as well.
	We do not write the details of the proof in the classical setting to avoid excessive repetition.

	In particular we recover \cite[Theorem 1]{bonnet_maximal_2020} about the case $d=2$, and  improve \cite[Theorem 3]{bonnet_maximal_2020} covering the case $d\geq 3$.
	For the latter, the result therein established a concentration on $1+\floor{\frac{d+3}{2}}$ values, which is always strictly worse than our concentration on $1+\ceil{\frac{d}{2}}$ values.

	Our proof is essentially an adaption of the proof of \cite[Theorems 3]{bonnet_maximal_2020}, with a key difference of a more apt choice of $k(\rho)$ resulting in a stronger result. In particular this strengthened proof shows directly concentration on two values for $d=2$, in contrast with \cite{bonnet_maximal_2020}.
	This means that our proof does not rely on graph planarity and other lengthy technical geometric arguments that the proof of \cite[Theorems 1]{bonnet_maximal_2020} relies on.

	In summary the current proof is simpler and yields a stronger result.
\end{rem}

Let us prepare some tools for the proof.

Denote the amount of vertices of degree at least $k$ in a window $B$ by 
$$N^B[k] \coloneqq \left|\left\{ v \in \cF_0(\cD_\beta) \cap B  \mid \deg_{G(\cD_\beta)} v \ge k  \right\}\right|.$$
Due to \cref{duality},
$$G(k) \coloneqq \E{N^{W_1}[k]} = \Lambda_\beta \P{\typcell\in \bigcup_{n=k}^\infty \cP_n}.$$
From \cref{lower-bound-beta} and \cref{upper-bound-beta} we can deduce that for some constants $c,C>0$ and for sufficiently large $k$
\begin{align}
	\label{bounds-on-G}
	G(k) \ge (ck)^{-\frac{2}{d-1}k}
	\quad \text{ and } \quad
	G(k+1)\le C k^{-\frac{2}{d-1}} G(k).
\end{align}
Let us define a continuous extension of $G$ to $(0,\infty)$:
$$G_c(x) \coloneqq \exp(\log G(\floor{x}) + (x-\floor{x}) (\log G(\floor{x} + 1) - \log G(\floor{x}))).$$
Note that $G$ and $G_c$ are decreasing functions.
The second inequality of \eqref{bounds-on-G} carries over in the following way.
\begin{corollary}
	\label{continuous-upper-bound}
	Let $0\ll x<y$. Then $G_c(y) \le C \floor{x}^{-\frac{2(y-x)}{d-1}} G_c(x)$. 
\end{corollary}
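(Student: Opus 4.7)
My plan is to exploit the piecewise log-linear structure of $G_c$ together with a telescoping of the recursive estimate from \Cref{upper-bound-beta}. By definition $\log G_c$ is continuous and linear on each interval $[k,k+1]$, with slope
$$s_k := \log G(k+1) - \log G(k).$$
The recursion $G(k+1) \le C k^{-2/(d-1)} G(k)$ is exactly the slope estimate $s_k \le \log C - \frac{2}{d-1}\log k$, valid for all sufficiently large $k$; telescoping this estimate across $[x,y]$ is the whole game.

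Concretely, I would set $k_1 := \lfloor x \rfloor$ and $m := \lfloor y \rfloor$. When $k_1 = m$ the increment is just $(y-x)\,s_{k_1}$ and the bound is immediate. Otherwise I split the displacement as
$$\log G_c(y) - \log G_c(x) = (k_1{+}1-x)\, s_{k_1} + \sum_{j=k_1+1}^{m-1} s_j + (y-m)\, s_m,$$
apply the slope bound term by term, and combine the $\log C$-parts using that the three coefficients add up to exactly $y-x$. The $\log j$ parts are handled by the elementary monotonicity $\log j \ge \log k_1$ for $j \ge k_1$. These steps collapse the displacement to
$$\log G_c(y) - \log G_c(x) \le (y-x)\log C - \frac{2(y-x)}{d-1}\log k_1,$$
which is the logarithmic form of the claim.

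The only delicate point is the constant. The derivation above produces a multiplicative factor $C^{y-x}$, not the single constant $C$ written in the statement; this is immaterial when $y-x$ is bounded (which is the regime that drives the concentration argument in the next section), and for unbounded $y-x$ it can be absorbed into $\lfloor x\rfloor^{\varepsilon(y-x)}$ at the price of an arbitrarily small decrease of the exponent $\frac{2}{d-1}$, exploiting that $0 \ll x$ makes $\lfloor x\rfloor$ as large as desired. I do not expect a genuine obstacle, only careful bookkeeping of constants and a clean treatment of the boundary cases $k_1 = m$ and $x$ an integer.
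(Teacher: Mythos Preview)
Your approach is correct and essentially the same as the paper's: both exploit that $\log G_c$ is piecewise linear, bound each slope $s_k$ by $\log(C k^{-2/(d-1)}) \le \log(C \lfloor x\rfloor^{-2/(d-1)})$, and arrive at the identical inequality $\log G_c(y) - \log G_c(x) \le (y-x)\log C - \frac{2(y-x)}{d-1}\log\lfloor x\rfloor$. The paper phrases this via ``$(y-x)$ times the maximum slope over $[\lfloor x\rfloor,\lfloor y\rfloor]$'' rather than your explicit telescoping, but it is the same computation; your observation that the argument really produces $C^{y-x}$ rather than $C$ is accurate (the paper's proof has the same feature), and as you note this is harmless since the corollary is only applied with $y-x$ bounded.
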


\begin{proof}
	Since $\log G_c(\cdot)$ is a piecewise linear function
	$$
	\log G_c(y)
	\le \log G_c(x) + (y-x) \max_{\floor{x} \le k \le \floor{y}}(\log G(k + 1) - \log G(k)).
	$$	
	But for any $k\ge \floor{x}$
	$$
	\log G(k + 1) - \log G(k) \le \log (C k^{-\frac{2}{d-1}}) \le \log (C \floor{x}^{-\frac{2}{d-1}} ).
	$$	
	After taking an exponent again, we are done.
\end{proof}

By the first moment method,
\begin{align}
	\label{e:first_moment_method}
	\P{M^B \ge k} = \P{N^B[k] \ge 1} \le V_d(B) G(k).
\end{align}
The following key lemma provides an inequality in the other direction.

\begin{lemma}
	\label{maximum-lower-bound}
	There exist constants $c>0$ and $v_0>0$ such that
	$$
	\P{M^B \ge k} \ge V_d(B)^{-\frac{2\beta+2}{d}} \left(G(k) - e^{-c V_d(B)^{\frac{2\beta+d+2}{d}}} \right),
	$$
	for any Borel set $B$ with $V_d(B) > v_0$
\end{lemma}

\begin{proof}
	We start by setting some positive real valued parameters whose relevance will become clear along the proof.
	Set
	\begin{align}
		\label{e:param-pf-lem-7-4}
		t &= V_d(B)^{\frac{2\beta+2}{d}}, 
		&n &= t V_d(B) = V_d(B)^{\frac{2\beta+d+2}{d}} ,
		&H &= V_d(B)^{\frac{2}{d}}.
	\end{align}
	We can assume without loss of generality that $t>t_0$ and $H>H_0$, where $t_0$ and $H_0$ are the constants from \cref{number-of-vertices} and \cref{typical-height}, respectively.
	Notice that
	$$
	V_d(B) G(k)
	= \E{N^B[k]}
	\le n \P{1\le N^B[k] \le n} +  \E{N^B[k] \1{N^B[k] > n}}.
	$$	
	Consequently,
	\begin{align}
		\label{e:lem7-4-pf-ub1}
		\P{M^B \ge k} 
		&\ge \P{1\le N^B[k] \le n} 
		\ge V_d(B)^{-\frac{2\beta+2}{d}} G(k) - \frac{1}{n}\E{N^B[k] \1{N^B[k] > n}}.	
	\end{align}
	Let us estimate the last expectation. Since $N^B[k] \le |\cF_0(\cD_\beta) \cap B|$,	
	\begin{align*}
			\E{N^B[k] \1{N^B[k] > n}}
			&\le \E{ \, |\cF_0(\cD_\beta) \cap B| \, \1{|\cF_0(\cD_\beta) \cap B| > n}}
			\\
			&= \E{\sum_{(v,h)\in \eta_\beta} \1{v\in B} \1{v\in\cF_0(\cD_\beta)} \1{|\cF_0(\cD_\beta) \cap B| > n}}.
	\end{align*}
	Using the Mecke equation, we can write this expectation as an integral. Recall that $\eta_{\beta;(v,h)} = \eta_\beta \cup \{(v,h)\}$. 	
	\begin{align*}
		\E{N^B[k] \1{N^B[k] > n}}
		&\leq
		\int_{B\times \R_+} \P{v\in \cF_0(\cD(\eta_{\beta;(v,h)})),\, |\cF_0(\cD(\eta_{\beta;(v,h)})) \cap B| > n } \mu_\beta(\dint v,\dint h) .
	\end{align*}
	Since $\cF_0(\cD(\eta_{\beta;(v,h)}))
	\subset \cF_0(\cD(\eta_\beta )) \cup \{v\}$,
	the integrand is bounded by $\P{ |\cF_0(\cD_\beta) \cap B| \ge n}$.
	It is also trivially bounded by $\P{v\in \cF_0(\cD(\eta_{\beta;(v,h)}))}$.
	Using these two bounds strategically, we split the last integral as follows.
	\begin{align*}
		&\E{N^B[k] \1{N^B[k] > n}}
		\\&\le
		\int_{B\times [0,\height ]} \P{ |\cF_0(\cD_\beta) \cap B| \ge n} \mu_\beta(\dint v,\dint h)
		+ \int_{B\times [\height ,+\infty)} \P{v\in \cF_0(\cD(\eta_{\beta;(v,h)}))}  \mu_\beta(\dint v,\dint h)
		\\
		&=
		\P{ |\cF_0( \cD_\beta) \cap B| \ge n}  \frac{\gamma c_{d+1,\beta}}{\beta + 1} V_d(B) \height^{\beta+1}
		+ V_d(B) \Lambda_\beta \P{\smash{h_\typcell} \ge \height} ,
	\end{align*}
	where the last equality holds due to the definition \eqref{def:mu_beta} of $\mu_\beta$ and the definition of the typical cell $(h_\typcell,\typcell)$, see \cref{s:typ-cell}.
	We bound the last two probabilities by applying \cref{number-of-vertices} (for the first) and \cref{typical-height} (for the second).
	This gives
	\begin{align*}
		\frac{1}{n} \E{N^B[k] \1{N^B[k] > n}}
		&\leq \frac{V_d(B)}{n}\left[\left(e^{-tV_d(B)} + C e^{-ct^{\frac{\beta + d/2 + 1}{\beta + 1}}}\right) \frac{\gamma c_{d+1,\beta}}{\beta + 1} \height^{\beta+1}
		+ \Lambda_\beta e^{- c' \height^{\beta + \frac{d}{2} + 1}} \right] ,
	\end{align*}
	where $c, c', C \in (0,\infty)$ are constants independent of $B$ and $k$. 
	Recalling the definition of the parameters in \eqref{e:param-pf-lem-7-4}, we have that the three exponential terms in the last display are bounded by $e^{- \min(c,c',1) V_d(B)^{\frac{2\beta+d+2}{d}}}$, while the factor $H^{\beta+1}$ is only polynomial in $V_d(B)$.
	Therefore, since $V_d(B)$ is assumed to be large enough, we have
	\begin{align*}
		\frac{1}{n} \E{N^B[k] \1{N^B[k] > n}}
		&\leq \frac{V_d(B)}{n} e^{- c'' V_d(B)^{\frac{2\beta+d+2}{d}}} ,
	\end{align*}
	where $c'' = \min(c,c',1)/2$.
	Since $\frac{V_d(B)}{n} = V_d(B)^{- \frac{2\beta + 2}{d}}$, combining the last bound with \eqref{e:lem7-4-pf-ub1} leads to the desired bound.	
\end{proof}

\begin{proof}[Proof of \cref{concentration}]
	
We will start with the first statement.
Let $\rho = \rho(k) \coloneqq G_c(k + \ceil{\frac{d}{2}} - \frac{1}{4})^{-1}$.
Then, by \eqref{e:first_moment_method} and \cref{continuous-upper-bound},
$$
\P{M^{W_\rho}\ge k + \ceil[\Big]{\frac{d}{2}}}
\le \rho G\left(k + \ceil[\Big]{\frac{d}{2}}\right)
= \frac{G_c(k + \ceil{\frac{d}{2}})}{G_c(k + \ceil{\frac{d}{2}} - \frac{1}{4})}
\rightarrow 0.
$$
Now subdivide $W_\rho$ into cubes of volume $V$ to be determined later. Take every $2^d$-th into a family of $N = \frac{\rho}{2^d V}$ cubes: $\{Q_i\}_{i\in[N]}$ (yellow squares on \cref{fig:grid}).

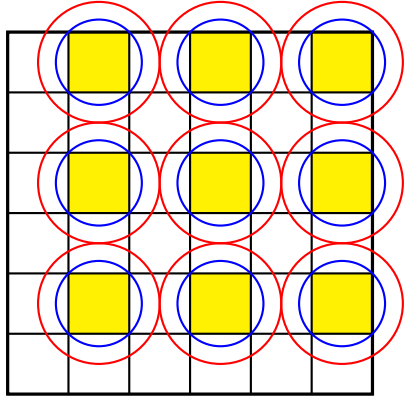
\begin{figure}[ht]
	\centering
	\begin{tikzpicture}[scale=0.8] 

		\def\gridSize{6} 
		\def\middleRadius{0.707} 
		\def\outerRadius{1}
		
		\foreach \x in {1, 3, 5, ..., \numexpr\gridSize-1\relax}
		\foreach \y in {1, 3, 5, ..., \numexpr\gridSize-1\relax} {
			\fill[yellow] (\x, \y) rectangle (\x+1, \y+1);
		}
		
		\draw[thick] (0,0) grid (\gridSize,\gridSize);
		
		\draw[very thick] (0,0) rectangle (\gridSize,\gridSize);
		
		\foreach \x in {1, 3, 5, ..., \numexpr\gridSize-1\relax}
		\foreach \y in {1, 3, 5, ..., \numexpr\gridSize-1\relax} {
			\draw[blue, thick] (\x+0.5, \y+0.5) circle (\middleRadius);
			
			\draw[red, thick] (\x+0.5, \y+0.5) circle (\outerRadius);
		}
		
		\end{tikzpicture}
		
	\caption{Subcubes and dependencies}
	\label{fig:grid}
\end{figure}

We consider the event $E_i$ that the skeleton of $\cD_\beta$ in the blue ball corresponding to $Q_i$ is determined by the points above the red ball.
It was shown in \cite[Item 1 of Lemma 4]{gusakova_beta-delaunay_2023}\footnote{In \cite{gusakova_beta-delaunay_2023} the model is defined in $\R^{d'-1}\times \R$ whereas in the current paper the model is defined in $\R^d \times \R$. Thus one needs to replace $d'$ by $d+1$ in the bound of \cite[Lemma 4]{gusakova_beta-delaunay_2023} in order to derive \eqref{dependence}.} that there exist constants $c,C>0$ such that for any $i\in[N]$,
\begin{equation}
	\label{dependence}
	\P{E_i^c} < c V^{\frac{1}{d}} e^{-C V^{\frac{2\beta + d + 2}{d}}}.
\end{equation}

Clearly, conditional on $\bigcap_{i=1}^N E_i$, the happenings of $\cD_\beta$ inside cubes $Q_i$ are independent.
\begin{align}
	\notag
	\P{N^{W_\rho}[k] = 0}
	&\le \P{\bigcap\{N^{Q_i}[k] = 0\}}
	\\
	\notag
	&\le \P{\bigcap\{N^{Q_i}[k] = 0\} \mid \bigcap E_i} + N \P{E_i^c}
	\\
	\notag
	&= \prod \P{N^{Q_i}[k] = 0 \mid \bigcap E_i} + N \P{E_i^c}
	\\
	\notag
	&\le \left(\frac{\P{N^{Q_1}[k] = 0}}{\P{\bigcap E_i}}\right)^N + N \P{E_i^c}
	\\
	\label{eq:180326a}
	&\le \left(1 - \P{N^{Q_1}[k] \ge 1}\right)^N
		\left(1-N \P{E_i^c}\right)^{-N}
		+ N \P{E_i^c}.
\end{align}
Recall from \eqref{bounds-on-G} that $|\log G(k)| > (\frac{2}{d-1} - \varepsilon) k\log k$. 
Now, we set $V$ to be of the form $V = c (k\log k)^{\frac{d}{2\beta+d+2}}$ with $c$ sufficiently large so that the bound of \cref{maximum-lower-bound} applied with $B = Q_1$ (and thus $V_d(B) = V$) gives
\begin{align}
	\label{e:LB-P-N-geq-1}
	\P{N^{Q_1}[k] \ge 1}
	=\P{M^B \ge k}
	&\ge \frac{1}{2} V^{-\frac{2\beta+2}{d}} G(k) ,
\end{align}
and also sufficiently large so that 
$$
V \ge (3 \log \rho)^\frac{d}{2\beta + d + 2}.
$$
The last condition can be satisfied since, by definition of $\rho$ and monotonicity of $G_c$, we have $ \log \rho \leq - \log G (k+\ceil{\frac{d}{2}} ) = O(k\log k)$ with the last approximation due to the lower bound \eqref{bounds-on-G}.
From \eqref{dependence},
\begin{align}
	\label{eq:180326b}
	N \P{E_i^c} < N^2 \P{E_i^c} < \rho^2 e^{-3 \log \rho} \rightarrow 0,
\end{align}
so at the same time 
\begin{align}
	\label{eq:180326c}
	\left(1-N \P{E_i^c}\right)^{-N} \approx \exp\left(N^2 \P{E_i^c}\right) \rightarrow 1.
\end{align}

Finally, applying \eqref{e:LB-P-N-geq-1} and \cref{continuous-upper-bound}, and recalling that $N = \frac{\rho}{2^d V}$ and $\rho = G_c(k + \ceil{\frac{d}{2}} - \frac{1}{4})^{-1}$, we get
\begin{align}
	\label{eq:180326d}
	N \P{N^{Q_1}[k] \ge 1}
	\ge \frac{c\rho}{V V^{\frac{2\beta+2}{d}}}  G(k)
	= \frac{cG_c(k)}{(k\log k) G_c(k + \ceil{\frac{d}{2}} -\frac{1}{4})}
	\ge \frac{ck^{(\ceil{\frac{d}{2}} -\frac{1}{4})\frac{2}{d-1}}}{k\log k}
	\ge \frac{ck^\frac{d-0.5}{d-1}}{k\log k}
	\rightarrow \infty.
\end{align}
Therefore, plugging \eqref{eq:180326b}, \eqref{eq:180326c} and \eqref{eq:180326d} in \eqref{eq:180326a}, we get $\P{M^{W_\rho} < k} = \P{N^{W_\rho}[k] = 0} \rightarrow 0$, and that concludes the proof of the first statement.

The second statement immediately follows from the first. Indeed, let $k(\rho) \coloneqq \max \{k \mid \rho(k) \le \rho\}$. Clearly
$$\P{M^{W_\rho} < k(\rho)} \le \P{M^{W_{\rho(k(\rho))}} < k(\rho)} \rightarrow 0$$
and
$$\P{M^{W_\rho} > k(\rho) + \ceil*{\frac{d}{2}}} \le \P{M^{W_{\rho(k(\rho)+1)}} \ge k(\rho) + 1 + \ceil*{\frac{d}{2}}} \rightarrow 0,$$
which is exactly what we claim.
\end{proof}

\section{Open questions}
\label{sec:open}

We conclude with several conjectures whose resolution would complement our results.
We provide supporting heuristics and discuss the challenges involved in proving them.

\paragraph{Concentration on two values for $\beta$-Delaunay model in higher dimensions}
Concentration of the maximal degree in random graphs often occurs on just two values. We have shown that this is the case for the $\beta$-Delaunay graph in dimension two. It would be reasonable to expect this applies also in higher dimensions.
\begin{conj}
Let $d\geq 3$. There exists a function $\rho \mapsto k(\rho)$ such that, as $\rho\to\infty$,
\[ \P{M^{W_\rho} \in \{k(\rho), k(\rho) + 1\}} \rightarrow 1.\]
\end{conj}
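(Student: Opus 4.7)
The gap of $\lceil d/2 \rceil$ in \cref{concentration} stems entirely from the factor $V_d(B)^{-\frac{2\beta+2}{d}}$ in \cref{maximum-lower-bound}, which forces $W_\rho$ to be much larger than the natural scale $G(k)^{-1}$. The plan is to upgrade that bound to a Poisson-type estimate $\P{N^B[k] \geq 1} \gtrsim V_d(B)\, G(k)$ in the regime $V_d(B)\, G(k) \to 0$. Once such an estimate is available, the decoupling scheme of the proof of \cref{concentration} delivers concentration on two values directly.

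Choose $k(\rho)$ to be the largest integer with $\rho\, G_c(k + \tfrac{1}{2}) \geq 1$. Combining the log-linear interpolation definition of $G_c$ with the ratio estimate $G(k+1) \leq C k^{-2/(d-1)} G(k)$ underlying \cref{continuous-upper-bound}, this gives
\[ \rho\, G(k(\rho)+2) \lesssim k(\rho)^{-3/(d-1)} \to 0 \qquad \text{and} \qquad \rho\, G(k(\rho)) \gtrsim k(\rho)^{1/(d-1)} \to \infty . \]
The upper tail is then immediate from the first moment: $\P{M^{W_\rho} \geq k(\rho)+2} \leq \rho\, G(k(\rho)+2) \to 0$. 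For the lower tail, subdivide $W_\rho$ into $N \asymp \rho/V$ well-separated cubes $Q_i$ of volume $V \asymp (\log\rho)^{d/(2\beta+d+2)}$, so that \cite[Lemma 4]{gusakova_beta-delaunay_2023} renders the events $\{N^{Q_i}[k] \geq 1\}$ independent up to a vanishing error, while $V\, G(k(\rho)) \to 0$. Granted the sharp bound $\P{N^{Q_i}[k] \geq 1} \geq c V\, G(k)$, one concludes
\[ \P{M^{W_\rho} < k(\rho)} \leq \bigl(1 - c V\, G(k(\rho))\bigr)^{N} + o(1) \leq \exp\bigl(-c' \rho\, G(k(\rho))\bigr) + o(1) \to 0 . \]

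To establish $\P{N^{Q_i}[k] \geq 1} \gtrsim V\, G(k)$ it suffices, via Paley-Zygmund, to prove
\[ \E{N^{Q_i}[k]\,(N^{Q_i}[k]-1)} \lesssim V\, G(k) + (V\, G(k))^2 . \]
By the two-point Mecke formula, the left-hand side equals an integral over pairs $x_\ell = (v_\ell, h_\ell)$, $\ell = 1, 2$, of the probability that both nuclei become Delaunay vertices of degree at least $k$ in $\cD(\eta_\beta \cup \{x_1, x_2\})$, weighted by $\mu_\beta \otimes \mu_\beta$. Split the integration by the separation $\norm{v_1 - v_2}$ relative to the characteristic diameter of a cell with $k$ facets and height $\max(h_1, h_2)$. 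For widely separated pairs, the two events decorrelate up to an error coming from the small overlap of the two flowers, yielding a contribution of order $(V\, G(k))^2$. For close pairs, bound the joint probability by the single marginal $\P{v_1 \in \cF_0,\ \deg v_1 \geq k}$ and integrate $(v_2, h_2)$ over the region forced by the Voronoi cell at $x_1$; by \cref{complementary} this extra integration has volume of order $1$ in the appropriately scaled coordinates, contributing $V\, G(k)$.

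The main difficulty lies in the close-pair regime, where two adjacent cells both carrying at least $k$ facets satisfy a strong joint geometric constraint. A natural tool is a two-cell analogue of \cref{complementary} describing the joint law of a neighboring pair of typical cells, but parameterizing such pairs and their joint shape space is more delicate than in the one-cell case. Matching the resulting factorial-moment estimate to the desired $O(V\, G(k))$ will likely require a multi-scale decomposition of the integration domain according to the height ratio $h_1/h_2$ and the proportion of facets shared by the two cells, in the spirit of the tail analysis of \cref{sec:upper}.
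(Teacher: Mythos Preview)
The statement you are addressing is a \emph{conjecture} in the paper, not a theorem: it appears in \cref{sec:open} as an open problem, and the paper offers no proof. There is therefore nothing to compare your argument against except the paper's heuristic discussion. That discussion suggests showing that high-degree vertices do not cluster, then combining the mixing result of \cite{gusakova_beta-delaunay_2023} with Anderson's classical maximum-of-i.i.d.\ theorem; it also notes that the only known anti-clustering argument (for $d=2$ Poisson-Delaunay) relies on planarity and has no evident higher-dimensional analogue.

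Your approach via a second-moment (Paley--Zygmund) upgrade of \cref{maximum-lower-bound} is a natural quantitative formulation of exactly the same obstacle: controlling $\E{N^{Q_i}[k](N^{Q_i}[k]-1)}$ in the close-pair regime is precisely the statement that two nearby nuclei are unlikely to both have degree $\geq k$, i.e.\ that high-degree vertices do not cluster. So the two routes are essentially the same, phrased differently.

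However, your proposal is not a proof. The decisive step---the close-pair contribution to the second factorial moment being $O(V\,G(k))$---is asserted but not established. Your own final paragraph concedes this: you say the joint geometric constraint is ``strong'', invoke a hypothetical two-cell complementary theorem that does not exist in the paper, and speak of what the argument ``will likely require''. The sketch you give (bound the joint probability by one marginal and integrate the second nucleus over a region of ``order $1$'') is not justified: nothing prevents the conditional probability that the neighbour also has $\geq k$ facets from being much larger than $G(k)$, and indeed the paper flags exactly this correlation as the unresolved difficulty. Until that close-pair estimate is actually proved, the conjecture remains open.
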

Our concentration result (\cref{concentration}) shows concentration on $1+\ceil{\frac{d}{2}}$ values.
As mentioned in \cref{relation-poisson}, we have adapted and optimized the proof of \cite[Theorem 3]{bonnet_maximal_2020}, reaching a stronger concentration than the one therein.
It seems that the arguments in these proofs cannot be further optimized and one would need a different approach to show concentration on only two values.

We believe that the result holds.
One possible approach would be to show that the vertices of high degree do not form clusters (or form only clusters of bounded size).
With the additional mixing properties of the $\beta$-Delaunay tessellation \cite{gusakova_beta-delaunay_2023}, this would imply that the maximal degree behaves essentially as the maximum of i.i.d.\ random variables. This, in turn, combined with the super-exponential decay of \cref{upper-bound-beta}, would imply desired concentration using the approach of the classical result of Anderson \cite[Theorem~1]{anderson_extreme_1970}.

However, although clusters do not seem to appear when looking at two dimensional simulations, showing that this is the case seems to be a difficult task.
In the proof of \cite[Theorem 1]{bonnet_maximal_2020} for the two dimensional Poisson-Delaunay model, it was shown that  there are no clusters of 5 vertices of high degree \cite[Proposition 11]{bonnet_maximal_2020}. That proof relies on the planarity of the graph, and there is no clear way of adapting it to higher dimensions.

\paragraph{Upper bound for $\beta'$-Voronoi model}
A challenging problem is to establish an upper bound matching the lower bound in \cref{lower-bound-beta-prime}.
\begin{conj}
	\label{conj:upper-bound}
	There exists a constant $C=C(d,\beta)<1$ such that for all $n\geq d+1$,
	\[
	\P{\typcellprime\in\cP_n} 
	< C^n.
	\]
\end{conj}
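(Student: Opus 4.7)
The plan is to mirror the strategy of \cref{sec:upper}, establishing a recursive inequality
$\P{\typcellprime\in\cP_n} \leq C_0 \,\P{\typcellprime\in\cP_{n-1}}$
for some constant $C_0 = C_0(d,\beta) < 1$ and all sufficiently large $n$; iteration then yields the conjecture after adjusting the constant for small $n$. By \cref{complementary} together with the scaling identity of \cref{complementary-variable-change}, and after absorbing the polynomial factor $\Gamma\bigl(n + \tfrac{-2\beta+2}{-2\beta+d+2}+1\bigr)/n!$, the problem reduces to controlling
\[
J_n \coloneqq \int_{(\halfspaces)^n} \1{P_{[n]} \in \cP_n,\ \Phi_\beta^\prime((\origin,-1),P_{[n]}) < 1}\, (\widetilde{\mu}_{\beta;(\origin,-1)}^\prime)^{\otimes n}(\dint \halfspace_{1:n}),
\]
showing $n\,J_n \leq C_0\, J_{n-1}$ up to polynomial corrections.

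The two technical ingredients needed are analogs of \cref{lem:RSW3} (the deletion lemma) and \cref{lem:0IntTakeOutFacets} (the integral-reduction lemma). For the deletion lemma, fix a simple $P\in\cP_n$ with $P\subset\B^d$; applying \cref{lem:RSW2} with $R(P)\leq 1$ produces an enveloping polytope $Q\supset P$ with $\lfloor n/d\rfloor$ facets and $\distH(P,Q) = O(n^{-2/(d-1)})$. For the $\Phi_\beta^\prime$-increase after removing a single facet $j$, \cref{phi-in-polar} combined with \cref{i:density-2} of \cref{density-on-half-spaces} gives
\[
\Phi_\beta^\prime((\origin,-1),P_{[n]\setminus\{j\}})-\Phi_\beta^\prime((\origin,-1),P) \lesssim \int_{U_j}\bigl(1-\support(P,u)\bigr)^{-\beta+3/2}\sigma_{d-1}(\dint u),
\]
where $U_j\subset\Sph^{d-1}$ collects the directions whose support value changes upon deletion. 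Averaging as in the proof of \cref{lem:RSW3} (using $\sum_j \sigma_{d-1}(U_j)=d\omega_d$ for a simple polytope), at least $n/4$ facets satisfy $\sigma_{d-1}(U_j) \leq 4d\omega_d/n$. Combined with an analog of \cref{lem:0IntTakeOutFacets} derived from the same density estimate, and the symmetrization \cref{lem:TechnicalPermutation}, this should yield the recursion.

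The principal obstacle, and the reason this problem is substantially harder than its $\beta$-counterpart, is that the bound $R(\widehat{\convexbody})\leq c_\Phi \Phi_\beta(\widehat{\convexbody})^{1/\alpha}$ underlying the proof of \cref{upper-bound-beta} has no direct $\beta^\prime$-analog: the relevant length scale is the heterogeneous boundary gap $g(P) = 1 - \max_{v\in P}\|v\|$, and the integrand $\bigl(1-\support(P,u)\bigr)^{-\beta+3/2}$ displayed above diverges as $\support(P,u)\to 1$, so when $P$ has vertices close to $\partial\B^d$ the $\Phi^\prime$-increase upon facet deletion can be arbitrarily large. To circumvent this, I expect one must partition $\cP_n^\prime$ dyadically by the boundary gap $g(P) \asymp 2^{-k}$ and estimate each range separately: the restriction of $\widetilde{\mu}_{\beta;(\origin,-1)}^\prime$ to halfspaces within distance $2^{-k}$ of $\partial\B^d$ is finite (of order $2^{k(\beta-3/2)}$ by \cref{density-on-half-spaces}), while the constraint $\Phi_\beta^\prime < 1$ together with \cref{content-of-a-ball} bounds the admissible range of $k$, and geometric summation over $k$ should then produce the desired exponential decay uniformly in $n$.
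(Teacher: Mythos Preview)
This statement is a \emph{conjecture} in the paper, listed explicitly as an open problem in \cref{sec:open}; there is no proof in the paper to compare against. The authors themselves outline why the approach of \cref{sec:upper} breaks down, and your proposal does not overcome those obstructions.

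The central gap is in your deletion step. You write the $\Phi^\prime$-increase after removing a facet as an integral over $U_j$ with integrand $(1-\support(P,u))^{-\beta+3/2}$ and then average over $j$. But the paper points out a more severe failure: for many $P\in\cP_n^\prime$ (in particular those with vertices near $\partial\B^d$), removing \emph{any} facet produces a polyhedron $P_{[n]\setminus\{j\}}$ that is no longer contained in $\B^d$, so $((\origin,-1),P_{[n]\setminus\{j\}})\notin\widehat{\cP}_{\origin,n-1}^\prime$ and $\Phi_\beta^\prime$ is infinite. Your displayed bound on the $\Phi^\prime$-increase is therefore vacuous on a set of positive measure, and the symmetrization via \cref{lem:TechnicalPermutation} cannot be applied because no permutation lands in the target set $T$. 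The dyadic partition by boundary gap $g(P)$ does not repair this: within each dyadic shell the same phenomenon occurs, and you have not explained how to control the contribution of polytopes for which every facet deletion exits the ball.

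There is a second, independent gap. In the $\beta$-case the recursive factor is $Cn^{-2/(d-1)}\to 0$, so it is eventually below $1$ automatically. Your proposed recursion for $\beta^\prime$ would at best yield a constant factor $C_0$, and nothing in your outline forces $C_0<1$; the paper flags exactly this as a separate difficulty. The phrases ``should yield'' and ``I expect'' in your final paragraph are accurate self-assessments: what you have is a plausible heuristic, not a proof.
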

The approach of the upper bound for the $\beta$-model does not seem to work here.
Recall that a key observation in the proof of \cref{upper-bound-beta} is that for any $((\origin,h),P)\in\widehat{\cP}_{\origin,n}$, there exist facets $\hyperplane_j$ of $P = \cap_{i\in[n]} \halfspace_i$ such that, when erased, we get a polytope $P_{[n]\setminus\{j\}} = \cap_{i\in[n]\setminus\{j\}} \halfspace_i$ which is close to $P$, both in Hausdorff distance and it terms of $\Phi$-content, see \cref{lem:RSW3}. 
Unfortunately (or interestingly!), in the $\beta'$-model the situation is more complex. 
It is even easy to find elements $((\origin,h),P) \in \widehat{\cP}_{\origin,n}^\prime$ for which erasing any facet of $P$ results in a polytope $P_{[n]\setminus\{j\}}$ which is not contained in the ball $\sqrt{-h}\B^d$, and for which therefore $((\origin,h),P_{[n]\setminus\{j\}}) \notin \widehat{\cP}_{\origin,n-1}^\prime$, for any $j\in[n]$. 
Another challenge is to track an explicit constant in all of the bounds involved in the result to make sure the resulting constant $C$ is less than one.

However, we believe that $\frac{1}{n} \log \P{\typcellprime\in\cP_n}$ converges, which is a slightly stronger than \cref{conj:upper-bound}.
We reformulate it in the following equivalent form, for presentation purposes of \cref{conj:anti-concentration}.
\begin{conj}
	\label{conj:upper-bound-2}
	There is a constant $\alpha=\alpha(d,\beta)>0$ such that
	\[
	\frac{\P{ f_{d-1}(\typcellprime)\geq n}}{\P{ f_{d-1}(\typcellprime)\geq n+1}} 
	\to e^\alpha ,
	\]
	as $n\to\infty$, where $f_{d-1}(\typcellprime)$ is the number of facets of $\typcellprime$.
\end{conj}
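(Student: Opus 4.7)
A first Tauberian-type reduction: if $p_n := \P{\typcellprime \in \cP_n}$ satisfies $p_n / p_{n+1} \to e^\alpha$ for some $\alpha > 0$, then the tail sums $q_n := \sum_{k \ge n} p_k$ also satisfy $q_n/q_{n+1} \to e^\alpha$ (and conversely), so it is equivalent to prove ratio convergence for $p_n$. The starting point is then \cref{shape}, which expresses $p_n$ up to a multiplicative constant as $\Gamma(n+\tau)\cdot I_n$, where $\tau = (-2\beta+2)/(-2\beta+d+2)$ and
\[
I_n = \frac{1}{n!}\int_{(\halfspaces)^n} \1{P \in \cP_n^\prime}\, \Phi_\beta^\prime\bigl((\origin,-1), P\bigr)^{-n-\tau}\, \bigl(\widetilde{\mu}_{\beta;(\origin,-1)}^\prime\bigr)^{\otimes n}(\dint \halfspace_{1:n}), \qquad P = \bigcap_{i=1}^n \halfspace_i.
\]
By Stirling the exponential rate of $p_n$ matches that of $(n/e)^n I_n$, so the conjecture reduces to a Laplace/large-deviation asymptotic for $I_n$. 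Inside this integral two effects compete: the weight $\Phi_\beta^\prime$ raised to the power $-n$ favours polytopes well contained inside $\B^d$, while by \cref{density-on-half-spaces} the measure $\widetilde{\mu}_{\beta;(\origin,-1)}^\prime$ concentrates on half-spaces nearly tangent to $\partial\B^d$. Their interplay should select an optimal shape $K^\star \subset \B^d$, plausibly $K^\star = \B^d$ by rotational symmetry.

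As a first step I would prove existence of $\alpha := -\lim_n (\log p_n)/n$ via a submultiplicativity estimate of the form $p_{n+m} \ge c^{n+m} p_n p_m / \mathrm{poly}(n,m)$. Geometrically, given near-optimal configurations witnessing $p_n$ and $p_m$, one rescales them by a small common factor so the combined shape still fits inside $\B^d$ and distributes their facet normals into disjoint caps of $\Sph^{d-1}$; \cref{content-of-a-ball} together with the additivity of the Voronoi flower then allow one to control the $\Phi^\prime$-content of the merged polytope in terms of the original contents. Fekete's lemma combined with the lower bound of \cref{lower-bound-beta-prime} then yields $\alpha \in (0,\infty)$.

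The main obstacle is upgrading this Cesàro-type statement to the sharper ratio convergence $p_n/p_{n+1} \to e^\alpha$. This would require a concentration result showing that, conditional on $f_{d-1}(\typcellprime) = n$, the shape $\shape^\prime(\extypcellprime)$ concentrates at scale $n^{-a}$ around $K^\star$, together with smoothness of the effective action at $K^\star$; a resulting expansion $\log p_n = -\alpha n + O(\log n)$ would then imply the conjecture. Two difficulties arise: the density of $\widetilde{\mu}_{\beta;(\origin,-1)}^\prime$ diverges non-integrably as the supporting hyperplane approaches $\partial\B^d$, forcing a delicate multiscale analysis near the boundary (of a similar flavour as the almost-regular polytope construction used in the proof of \cref{lower-bound-beta-prime}, but substantially more intricate); and the global constraint $P \in \cP_n^\prime$ (every half-space must contribute a facet) is not readily amenable to local perturbative arguments. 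An alternative route worth exploring is to establish log-concavity of the sequence $(p_n)$, in which case the ratio $p_{n+1}/p_n$ would be monotone and its convergence would follow directly from the existence of $\alpha$.
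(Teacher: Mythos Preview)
The statement you are addressing is \emph{Conjecture~8.2}, listed in the paper's open-problems section. The paper offers no proof; it only states the conjecture, remarks that it is (claimed to be) a reformulation of the belief that $\frac{1}{n}\log\P{\typcellprime\in\cP_n}$ converges, and uses it as motivation for the anti-concentration \cref{conj:anti-concentration}. There is therefore nothing in the paper to compare your attempt against.

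What you have written is not a proof but a research programme, and you say so yourself: the submultiplicativity/Fekete step would, if carried out, give existence of $\alpha = -\lim_n \frac{1}{n}\log p_n$, but you correctly identify that upgrading this to the ratio statement $p_n/p_{n+1}\to e^\alpha$ (equivalently $q_n/q_{n+1}\to e^\alpha$) is the real difficulty, and you leave it open. Two further comments. First, your ``merging'' construction for submultiplicativity is vague at the crucial point: placing the facet normals of two near-optimal configurations into disjoint spherical caps distorts both polytopes nontrivially, and it is not clear that the $\Phi'$-content of the merged polytope is controlled multiplicatively by the original contents; the Voronoi flower is \emph{not} additive under such a merge. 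Second, note that the paper's phrasing ``equivalent form'' between $\frac{1}{n}\log p_n\to -\alpha$ and \cref{conj:upper-bound-2} is informal: root convergence does not in general imply ratio convergence, so even a successful Fekete argument would fall short of the conjecture as stated. Your suggested alternative via log-concavity of $(p_n)$ is a reasonable thing to try, but there is no indication in the paper (or in your sketch) of why it should hold.
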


\paragraph{Anti-concentration results for $\beta'$-Delaunay model}
By Anderson in \cite[Theorem 2]{anderson_extreme_1970}, the maximum of i.i.d.\ random variables with exponential tails has a distribution which tends to be bounded by two Gumbel distributions shifted by $1$.
If one were to confirm \cref{conj:upper-bound-2} and to show that the vertices of large degrees are sufficiently independent so that the maximal degree behaves as the maximum of i.i.d.\ random variables, then \cite[Theorem 2]{anderson_extreme_1970} would lead to the following anti-concentration conjecture, with the constant $\alpha$ as in \Cref{conj:upper-bound-2}.
\begin{conj}
	\label{conj:anti-concentration}
	There exists a constant $\alpha>0$ and a function $\rho \mapsto k(\rho)$, such that for all $z\in\Z$, 
\[
    \exp\left(- e^{-\alpha(z-1)}\right)
    \leq \liminf_{\rho\to\infty} \P{M^{W_\rho} \geq k(\rho) + z}
    \leq \limsup_{\rho\to\infty} \P{M^{W_\rho} \geq k(\rho) + z}
    \leq \exp\left(- e^{-\alpha z}\right).
\]
\end{conj}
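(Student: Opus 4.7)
The plan is to combine \cref{conj:upper-bound-2} with a Poisson approximation for the point process of high-degree vertices in $W_\rho$, yielding an Anderson-type maximum-of-integers bound in the style of \cite[Theorem 2]{anderson_extreme_1970}.

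First I would translate the tail-ratio of \cref{conj:upper-bound-2} into an explicit centering sequence. By \cref{duality} and the stationarity of $\cD_\beta^\prime$,
\[
\E{N^{W_\rho}[k]} = \rho\,\Lambda_\beta^\prime\,\P{f_{d-1}(\typcellprime)\geq k}.
\]
Setting $G^\prime(k)\coloneqq \P{f_{d-1}(\typcellprime)\geq k}$ and defining $k(\rho)$ as the largest integer with $\rho\,\Lambda_\beta^\prime\,G^\prime(k(\rho))\geq 1$, \cref{conj:upper-bound-2} yields $G^\prime(k)/G^\prime(k+1)\to e^\alpha$, so that
\[
\rho\,\Lambda_\beta^\prime\,G^\prime(k(\rho)+z) \;=\; \frac{G^\prime(k(\rho)+z)}{G^\prime(k(\rho))} \cdot \rho\,\Lambda_\beta^\prime\,G^\prime(k(\rho))
\]
accumulates at values $\lambda_z\in[e^{-\alpha z},e^{-\alpha(z-1)}]$, encoding the Anderson lattice effect which is directly responsible for the gap between the two envelopes in the conjectured bounds.

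Next I would prove a Poisson limit $N^{W_\rho}[k(\rho)+z]\Rightarrow\mathrm{Poisson}(\lambda_z)$ via convergence of all factorial moments. By the multivariate Mecke formula, the $r$-th factorial moment reduces to an integral over $r$-tuples of points of $\eta_\beta^\prime$ of the joint probability that each contributes a Voronoi cell with at least $k(\rho)+z$ facets. Tuples with projections pairwise far apart should factorize through the mixing properties of $\beta^\prime$-Delaunay in the spirit of \cite{gusakova_beta-delaunay_2023}; tuples containing a close pair must be shown negligible. Once Poisson convergence is secured, $\P{M^{W_\rho}\geq k(\rho)+z}\to 1-e^{-\lambda_z}$, and combining with $\lambda_z\in[e^{-\alpha z},e^{-\alpha(z-1)}]$ delivers the double-sided envelope of the conjecture, after the standard reformulation familiar from \cite[Theorem 2]{anderson_extreme_1970}.

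The hard part will be the negligibility of nearby pairs: showing that two vertices of $\cD_\beta^\prime$ within bounded distance cannot both have degree at least $k(\rho)+z$ except on an event of probability $o(\rho^{-2})$. In the $\beta$-case the super-exponential tail of \cref{upper-bound-beta} suppresses such correlations automatically through a crude union bound. In the $\beta^\prime$-case the merely exponential rate of \cref{lower-bound-beta-prime} is critically borderline: any constant-order positive correlation between adjacent tall cells would replace the Poisson limit by a Poisson cluster limit and destroy the Gumbel asymptotics. Overcoming this appears to require a two-cell refinement of \cref{complementary} together with a quantitative analysis of the joint $\beta^\prime$-Voronoi flower geometry for neighboring nuclei, for which the present machinery is not yet developed.
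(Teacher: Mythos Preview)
The statement you are attempting to prove is a \emph{conjecture}, not a theorem: the paper does not prove it and explicitly lists it in \cref{sec:open} as an open problem. The paper only provides heuristics: assuming \cref{conj:upper-bound-2} and assuming that high-degree vertices are sufficiently independent, Anderson's result \cite[Theorem~2]{anderson_extreme_1970} would yield the stated bounds. So there is no ``paper's own proof'' to compare against.

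That said, your outline is a reasonable fleshing-out of precisely the heuristic the authors sketch, and you correctly identify where the genuine obstructions lie. Your proposal is conditional on \cref{conj:upper-bound-2}, which is itself open; and you acknowledge that the decorrelation of nearby high-degree vertices (the ``no clusters'' step) is unresolved --- the paper flags exactly this difficulty, noting that the exponential tail in the $\beta'$-model is borderline and that the planarity-based argument from \cite{bonnet_concentration_2024} does not extend. One additional subtlety: the mixing results of \cite{gusakova_beta-delaunay_2023} are established for the $\beta$-model, and their applicability to the $\beta'$-model would need to be checked. In short, your plan matches the authors' intended route, but it is a programme rather than a proof, and the two missing ingredients (\cref{conj:upper-bound-2} and the anti-clustering estimate) are precisely the open problems the paper highlights.
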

To our knowledge, this behavior has not been yet observed for a graph arising from a tessellation.
\printbibliography

\end{document}